\begin{document}
	\renewcommand{\thepage}{\arabic{page}}
	\pagestyle{plain} 
	\setcounter{page}{1}
	\title{Optimal Investment - Consumption - Insurance with Partial Information and Correlation Between Assets Price and Factor Process}
	\titlerunning{Hamiltonian Mechanics}  
	%
	\author{WOUNDJIAGUE Apollinaire\inst{1,4} \and Rodwell Kufakunesu \inst{3} \and Julius Esunge\inst{2}}
	\authorrunning{WOUNDJIAGU\'{E} Apollinaire et al.} 
	%
	\tocauthor{WOUNDJIAGU\'{E} Apollinaire, Rodwell Kufakunesu,  Julius Esunge}
	\institute{National Advanced School of Engineering of Maroua P.O. Box 58 Maroua, Cameroon,\\
		\email{appoappolinaire@yahoo.fr}
		\and
		University of Mary Washington, Fredericksburg, Virginia, USA
		\and
		Department of Mathematics and Applied Mathematics, University of Pretoria, 0002, South Africa
		\and
		National Advanced School of Engineering of Yaounde P.O. Box 8390 Yaounde, Cameroon}
	\maketitle   
	\begin{abstract}
		In this research, we present an analysis of the optimal investment, consumption, and life insurance acquisition problem for a wage earner with partial information. Our study considers the non-linear filter case where risky asset prices are correlated to the factor processes under constant relative risk aversion (CRRA) preferences. We introduce a more general framework with an incomplete market, random parameters adapted to the Brownian motion filtration, and a general factor process with a non-linear state estimation and a correlation between the state process (risky asset prices) and the factor process. To address the wage earner's problem, we formulate it as a stochastic control problem with partial information where the risky assets prices are correlated to the factor processes. Our framework is extensive since the non-linear filter applied to the linear case gives a more robust result than the Kalman filter. We obtain the non-linear filter through the Zakai equation and derive a system of the Hamilton-Jacobi-Bellman (HJB) equation and two backward stochastic differential equations (BSDE). We establish the existence and uniqueness of the solution, prove the verification theorem, and construct the optimal strategy. 
		
		\keywords{Assets, Factor processes, non linear filter, Zakai equation, HJB equation, verification theorem}
	\end{abstract}
	\section{Introduction}
	\par In the field of financial and actuarial mathematics, the optimization of investment portfolios is a topic of significant importance. The seminal work of \cite{merton1975optimum} introduced the optimal investment-consumption problem and derived the Hamilton-Jacobi-Bellman (HJB) equation using the dynamic programming approach.
	
	While the stochastic optimal control problem can be solved explicitly through dynamic programming, the stochastic maximum principle, or the convex duality martingale method, the literature reveals that \cite{merton1975optimum} did not consider uncertainty in investment horizon. This limitation was overcome by \cite{blanchet2008optimal}.
	
	\cite{pliska2007optimal} and \cite{ye2007optimal} have further explored the problem solved in \cite{blanchet2008optimal}, incorporating a life insurance purchase problem. Additionally, \cite{duarte2014optimal} extended the problem to multiple risky assets, while\cite{duarte2014optimal} and \cite{shen2016optimal} considered the problem with random parameters and solved it through a combination of a HJB equation and a backward stochastic differential equation (BSDE).
	\cite{hata2020optimal} extends the counterpart of \cite{shen2016optimal} with partial information.
	\par
	The study by \cite{hata2020optimal} investigated investment-consumption-insurance strategies based solely on past information of risky assets, without utilizing information from the factor process. However, this study assumed deterministic model parameters, whereas the consideration of random parameters such as income, interest rate, lifetime horizon, etc. is more realistic. Furthermore, we propose a more generalized setting where \cite{hata2020optimal}'s study is a special case. Specifically, we suggest a non-linear filtering setting where the prices of risky assets are correlated with the factor process.
	\par
	The present paper investigates an optimal investment-consumption-life insurance problem faced by a wage earner under partial information, in which the Kalman filter is non-linear and the prices of risky assets are correlated to the factor process. The concept of partial information, as introduced in \cite{hata2020optimal}, implies that the investment-consumption-insurance strategies are formulated based solely on past information pertaining to risky assets, without utilizing information related to factor processes. This restriction is imposed since the wage earner may not always have complete access to all the factor processes. To enhance the practicality of this model, we expand upon this framework by considering:
	\begin{itemize}
		\item The use of random processes to accurately describe interest rates, volatility, force of mortality, premium-insurance ratios, income, and discount rates.
		\item The incorporation of a general factor process that allows for non-linear state estimation.
		\item The establishment of a correlation between risky asset prices processes and factor processes that can provide valuable insights into the dynamics.
	\end{itemize}
	Thus, the model described in \cite{hata2020optimal} can be seen as a special case of our model.
	\par
	We assume that:
	\begin{itemize}
		\item The wage earner is faced with decision-making regarding consumption, investment, and life insurance during the time interval $[0, T\wedge \xi ]$, where $T$ represents the deterministic retirement time of the wage earner and $\xi$ represents the positive random variable of their time of death.
		\item The wage earner's instantaneous income is a random process ${R(t) \mid t\in [0, T\wedge \xi ]}$ that is adapted to the Brownian motion filtration.
		\item The wage earner purchases a life insurance policy at a premium rate, which is a random process ${\beta(t) \mid t\in [0, T\wedge \xi ]}$ that is adapted to the Brownian filtration. In the event of the wage earner's death, the insurance company pays the beneficiary an amount equal to $\dfrac{\beta (t)}{a(t)}$, where $a(t)$ is the insurance premium-payout that is predetermined by the insurer.
		\item The wage earner seeks to maximize their satisfaction from a consumption process ${C(t) \mid t \in [0, T\wedge \xi] }$, that is, adapted to the Brownian filtration.
		\item The wage earner's preference is described by the utility function of the constant relative risk aversion (CRRA) type.
		\item The wage earner can invest their savings in a financial market described by a non-negative risk-free interest rate $r(t)$, which is a stochastic process adapted to the Brownian filtration, a $k-$dimensional standard Brownian motion, a risk-free stock $S_{0t}$, and a price process of the risky stocks $(S_{1t}, \ldots , S_{kt})^{T}$.
	\end{itemize}
	The rest of the paper is structured as follows: Section two begins with a description of the financial market, followed by an overview of the life insurance market, and concludes with the presentation of the stochastic control problem. In section three, the aforementioned problem is solved, and the verification theorem is subsequently formulated and proved in the following section. Finally, the paper is concluded in the last section.
	\section{The Problem Statement}
	We consider an optimal investment, consumption and life insurance purchase problem for a wage earner with a partial information in the case of non linear filter where the risky assets prices are correlated to the factor process. Indeed we extend \citep{hata2020optimal} by considering parameters as random processes and considering general factor process models, that is the returns and volatility of the assets are random and are affected by some economic factors.The factor process noise correlates to that of the assets price. Then the market is in general incomplete. \par
	Let us first describe the financial market, the insurance market and the wealth process.
	\subsection{Description of the Financial Market}
	Let $\big (\Omega ,  \mathfrak{F}, P_{r}\big)$ be a probability space equipped with the continuous time filtration $\mathbb{F} : = \big (\mathfrak{F}_{t}^{W} \big)_{t \in [0, T]}$ .\par
	The financial market is described by the risk-free bond and $k$ risky assets such that:
	\begin{equation*}
		\left\{
		\begin{array}{ll}
			d S_{0t} = r(Z_{t}, t) S_{0t} dt \\
			S_{00} = s_{00}
		\end{array}
		\right.
	\end{equation*}
	\begin{equation*}
		\left\{
		\begin{array}{ll}
			d S_{it} = S_{it} \bigg \{\eta_{i} (Z_{t},  t)dt + \sum\limits_{j=1}^{N} \Sigma_{ij}(Z_t , t) d W_{jt} + \sum\limits_{j=1}^{N} \alpha_{ij}(t) dB_{jt} \bigg \}  \\
			S_{i0} = s_{i0}, \quad i = 1, \ldots , k
		\end{array}
		\right.
	\end{equation*}
	
	\begin{equation}\label{1}
		\left\{
		\begin{array}{ll}
			d Z_{t} = h(Z_{t} , t) dt + \gamma (Z_{t} , t) d B_{t}  \\
			Z_{0} = z \sim \mathcal{N}(z_{0} , P_{0}), \quad z_{0} \in \mathbb{R}^{m},
		\end{array}
		\right.
	\end{equation}
	where:
	\begin{itemize}
		\item[$\bullet$]
		$B_{t} = (B_{it})_{i = 1, \ldots , N}$ is an $N-$dimensional standard Brownian motion defined on $\big (\Omega ,  \mathfrak{F}, P_{r}\big)$ with a covariance matrix $\mathbb{C}_{B}$;
		\item[$\bullet$]
		$W_{t} = (W_{jt})_{j = 1, \ldots , N}$ is an $N-$dimensional standard Brownian motion defined on $\big (\Omega ,  \mathfrak{F}, P_{r}\big)$ with a covariance matrix $\mathbb{C}_{W}$;
		\item[$\bullet$]
		$B_{t}$ and $W_{t}$ are independent; 
		\item[$\bullet$]
		$Z_{t}$ is the $m-$dimensional stochastic factor process which affects the risk-free interest rate $r(Z_{t}, t)$, the drift $\eta(Z_{t}, t)$ of $S_{t}$, and the volatility matrix $\Sigma (Z_{t}, t)$ of $S_{t}$;
		\item[$\bullet$]
		$\Sigma , \gamma $ are $k \times N$ and $m \times N$ matrix - valued functions respectively;
		\item[$\bullet$]
		$\eta$ and $h$ are $\mathbb{R}^{k} -$valued and $\mathbb{R}^{m}-$valued functions respectively.
	\end{itemize}
	\subsubsection*{Assumptions:}
	\begin{enumerate}
		\item[\textbf{H1)}] $\gamma , h , \Sigma, \eta$ and $r$ are Lipschitz and smooth;
		\item[\textbf{H2)}] $\exists \; \delta_{1}, \delta_{2} > 0$ such that; 
		\begin{equation*}
			\Sigma (t) \Sigma^{T}(t) \geqslant \delta_{1} I_{m\times m}
		\end{equation*} 
		and 
		\begin{equation*}
			\gamma(t) \gamma^{T}(t) \geqslant \delta_{2} I_{m\times m};
		\end{equation*}
		\item[\textbf{H3)}]
		$r$ is smooth, non-negative, and bounded with bounded derivatives; 
		\item[\textbf{H4)}]
		$\big [\alpha_{ij}(t)\big ]^{1 \leqslant i \leqslant k }_{1 \leqslant j \leqslant N}$ is a deterministic matrix in the observation process;
		\item[\textbf{H5)}]
		$(W_{t})_{t \in [0, T]}$ and $(B_{t})_{t \in [0, T]}$ have covariance matrix $\mathbb{C}_{W}$ and $\mathbb{C}_{B}$ respectively that are positive definite.
	\end{enumerate}
	\subsection{Description of the life insurance market}
	We assume that the wage earner is alive at time $t=0$ and has a life time $\xi$, which is a random variable defined on the probability space $\big (\Omega ,  \mathfrak{F}, P_{r}\big)$. The random variable $\xi$ is supposed to be independent of $\mathbb{F}$ and has a distribution function defined by :
	\begin{equation}\label{2}
		G_{\xi}(t) = P_{r}\big (\xi \leqslant t \mid \mathcal{H}_{t} \big) = 1 - e^{-\int_{0}^{t} \mu(s) ds}, \; t\in [0 ,  T],
	\end{equation} 
	where
	\begin{equation*}
		\mathcal{H}_{t} = \sigma (S_{u} ; u \leqslant t).
	\end{equation*}
	$\overline{G}_{\xi}$ is the conditional survival probability of the wage earner alive at $t$ and is defined as follows:
	\begin{equation}\label{3}
		\overline{G}_{\xi}(t) \longmapsto P_{r} \big (\xi \geqslant t \mid \mathcal{H}_{t} \big ) = e^{- \int_{0}^{t}\mu(s) ds}.
	\end{equation}
	The conditional instantaneous death rate for the wage earner surviving to time $t$ is defined as:
	\begin{align*}
		\mu : & [0,T) \longrightarrow \mathbb{R}^{+}\\
		& t \longmapsto \lim_{\tau t \to 0} \dfrac{ P_{r} \big (t \leqslant \xi < t + \tau t  \mid \xi \geqslant t  \big) }{\tau t},
	\end{align*}
	i.e.,
	\begin{align*}
		\mu (t) & = \lim_{\tau t \to 0} \dfrac{P_{r} \big (t \leqslant \xi < t + \tau t \big )}{\tau t P_{r} \big (\xi \geqslant t \big)}\\
		& = \lim_{\tau t \to 0} \dfrac{G_{\xi}(t + \tau t) - G_{\xi}(t) }{\tau t \overline{G}_{\xi}(t)}\\
		& = \dfrac{g_{\xi}(t) }{\overline{G}_{\xi}(t)}\\
		& = - \dfrac{\partial }{\partial t} \big (\ln \big (\overline{G}_{\xi} (t) \big) \big) \Rightarrow \overline{G}_{\xi} (t) = e^{- \int_{0}^{t} \mu (s)ds}.
	\end{align*}
	Here, $\mu$ is a continuous and deterministic function such that 
	\begin{equation*}
		\int_{0}^{+ \infty} \mu(t) dt = + \infty.
	\end{equation*}
	The conditional probability density of the death for the wage earner at $t$ is defined by 
	\begin{equation*}
		g_{\xi}(t) = \mu (t) e^{-\int_{0}^{t} \mu(s)ds}, \quad \forall t \in [0, T].
	\end{equation*}
	Assuming the wage earner purchases a life insurance product in the event of premature death before their retirement time $T$, we further assume that the wage earner pays a premium insurance rate $\beta (t)$. If the claim occurs at a time $\xi < T$, the insurer will pay $\dfrac{\beta (\xi) }{a(\xi)}$ to the beneficiary of the contract. Here, $a(t)$ refers to a continuous and deterministic function on the interval $[0, T]$ known as the insurance premium-payout ratio. The total legacy for a death that occurs at time $t$ is therefore given by:
	\begin{equation*}
		\mathbf{v} (\xi) = \left\{
		\begin{array}{ll}
			X(\xi) + \dfrac{\beta (\xi) }{a(\xi)} & \text{if} \; \xi < T \\
			X(T) & \text{if} \; \xi \geqslant T,
		\end{array}
		\right.
	\end{equation*} 
	where $X(t)$ represents the wealth process of the wage earner at time $t$. \par
	Indeed, if $\xi \geqslant T$, then the wage earner's payment ends at $T$, and he has no need of life insurance. Thus $\beta(T) = 0$.
	\subsection{The wealth process}
	For $j = 0, 1, \ldots, k $ and $t \in [0, \xi \wedge T]$, let $\rho_{j}(t)$ be the amount of the wage earner's wealth allocated to $S_{j}(t)$:
	\begin{equation*}
		\rho (t) = \big (\rho_{1}(t), \rho_{2}(t), \ldots , \rho_{k}(t) \big)^{T}, \; t \in [0, \xi \wedge T], \; \sum\limits_{j=1}^{k}\rho_{j}(t) = 1.
	\end{equation*}
	We assume that 
	\begin{equation*}
		\rho(t), C(t), \beta (t), \quad t \in [0, T]
	\end{equation*}
	are $\mathcal{H}_{t}-$progressively measurable stochastic processes such that :
	\begin{equation*}
		\int_{0}^{T} \bigg (|| \rho (t)||^2 + |C(t)| + |\beta (t)| \bigg) dt < + \infty\quad P_{r} - a.s.
	\end{equation*} 
	By the self-financing condition, the wage's earner wealth process $X_{t}$, starting with initial capital $x$, satisfies the following dynamics:
	\begin{equation*}
		\left\{
		\begin{array}{ll}
			d X_{t} = \big (X_{t} - \rho^{T}(t) \mathbf{1}\big)\dfrac{ d S_{0t} }{S_{0t}} +  \sum\limits_{j=1}^{k}\rho_{j}(t) \dfrac{d S_{jt}}{S_{jt}} - C(t) dt - \beta(t) dt + R(t) dt \\
			X(0) = x,
		\end{array}
		\right.
	\end{equation*} 
	where $ \mathbf{1} = (1, \ldots , 1)^{T}$. \par
	Thus
	\begin{equation*}
		\left\{
		\begin{array}{ll}
			d X_{t} = \big (X_{t} - \rho^{T}(t) \mathbf{1}\big) r (Z_{t} , t) dt +  \sum\limits_{i=1}^{k}\rho_{i}(t) \bigg [\eta_{i}(Z_t , t) dt + \sum\limits_{j=1}^{N} \Sigma_{ij}(Z_t , t) d W_{jt} + \sum\limits_{j=1}^{N} \alpha_{ij}(t) d B_{jt}   \bigg] \\
			- C(t) dt - \beta(t) dt + R(t) dt \\
			X(0) = x.
		\end{array}
		\right.
	\end{equation*} 
	In matrix form, we have:
	\begin{equation}\label{4}
		\left\{
		\begin{array}{ll}
			d X_{t} = \big (X_{t} - \rho^{T}(t) \mathbf{1}\big) r (Z_{t} , t) dt +  \rho^{T}(t) \bigg [\eta (Z_t , t) dt + \Sigma (Z_t , t) d W_{t} + \alpha(t) d B_{t} \bigg] - C(t) dt - \beta(t) dt \\
			+ R(t) dt \\
			X(0) = x, \quad t \in [0, \xi \wedge T],
		\end{array}
		\right.
	\end{equation}
	where:
	\begin{equation*}
		\eta (Z_t , t) = \bigg (\eta_{1}(Z_{t}, t), \ldots , \eta_{k}(Z_t , t)  \bigg)^{T}
	\end{equation*}
	and 
	\begin{equation*}
		\Sigma (Z_t , t) = \begin{pmatrix}
			\Sigma_{11} (Z_t , t) & \Sigma_{12} (Z_t , t) & \ldots & \Sigma_{1N} (Z_t , t)\\
			\Sigma_{21} (Z_t , t) & \Sigma_{22} (Z_t , t) & \ldots & \Sigma_{2N} (Z_t , t) \\
			\vdots & \vdots & \ldots & \vdots \\
			\Sigma_{k1} (Z_t , t) & \Sigma_{k2} (Z_t , t) & \ldots & \Sigma_{kN} (Z_t , t)
		\end{pmatrix}.
	\end{equation*}
	\begin{equation}\label{5}
		\eqref{4} \Rightarrow \left\{
		\begin{array}{ll}
			d X_{t}  & = \bigg \{ X_{t} r (Z_{t}, t ) + \rho^{T}(t) \big (\eta(Z_t , t) - r(Z_t , t) \mathbf{1}\big) - C_{t} - \beta(t) + R(t)\bigg \} dt \\
			& + \rho^{T}(t) \Sigma (Z_t , t) d W_{t} + \rho^{T}(t)\alpha (t) d B_{t}\\
			X(0) & = x, \quad t \in [0, \xi \wedge T]
		\end{array}
		\right.
	\end{equation}
	\subsection{The stochastic control problem}
	The problem is to find the strategies $\rho(t), C(t), \beta (t)$ that maximize the expected utility of the wage earner obtained from his consumption for all $t \in [0, \xi \wedge T]$, his terminal wealth and the value of his legacy.\par
	Let $\mathcal{L}_{T}(x,0)$ be the set of admissible control. Then the wage earner's problem is to find the strategy $\big (\rho_{t}, C_{t}, \beta_{t} \big) \in \mathcal{L}_{T}(x,0)$ that maximize:\\
	$\varphi(0, x, z)$
	\begin{equation}\label{6}
		= \sup_{ (\rho_{t}, C_{t}, \beta_{t} ) \in \mathcal{L}_{T}(x,0)} \mathbb{E}_{x} \bigg [\int_{0}^{\xi \wedge T}  e^{- \int_{0}^{t} \theta (s) ds}V(C_{t}) \; + \; e^{-\int_{0}^{\xi} \theta (t) dt } V(\mathbf{v}(\xi))\mathds{1}_{[\xi \leqslant T]} \; + \; e^{- \int_{0}^{T} \theta (t) dt} V(X(T)) \mathds{1}_{[\xi > T]} \bigg],
	\end{equation}
	where:
	\begin{itemize}
		\item[$\bullet$]
		$\theta(t)$ is the discount rate process which is an $\mathbb{R}^{+}-$valued and $\mathbb{F}-$adapted process;
		\item[$\bullet$]
		$V$ is a power utility function defined by
		\begin{equation}\label{7}
			V(y) =  \left\{
			\begin{array}{ll}
				\dfrac{y^{\delta}}{\delta} & \text{if} \; y \geqslant 0\\
				- \infty & \text{if} \; y < 0, \quad \forall \delta \in (-\infty , 0) \cup (0, 1) .
			\end{array}
			\right.
		\end{equation}
		\item[$\bullet$]
		$\mathbb{E}_{x}$ is the expectation operator conditional on $X(0) = x$.
	\end{itemize}
	Equation \eqref{6} can be seen as a stochastic control problem with random parameters for general factor models, where the factor process $Z_{t}$ is the state process and the price process $S_{t}$ of the risky stock is the observation process and are supposed to be correlated.
	\section{Solution of Problem \eqref{6}}
	Let us consider the following transformation:
	\begin{equation*}
		Q_{it} := \ln S_{it}, \quad i = 1, \ldots , k.
	\end{equation*}
	By Itô's lemma, $Q_{it}$ is the solution of 
	\begin{align*}
		&d Q_{it}\\ & = d \ln S_{it} \\
		& = \dfrac{1}{S_{it}} d S_{it} - \dfrac{1}{2 S^{2}_{it} } \big (d S_{it} \big)^2 \\
		& = \bigg \{\eta_{i}(Z_t , t) - \dfrac{1}{2} \bigg [\sum\limits_{j =1 }^{N} \Sigma_{ij}(Z_t , t) \Sigma_{ji} (Z_t , t)  + \sum\limits_{j=1}^{N} \alpha_{ij}(t) \alpha_{ji}(t) \bigg]   \bigg \} dt + \sum\limits_{j=1}^{N} \Sigma_{ij}(Z_t , t) d W_{jt} + \sum\limits_{j=1}^{N} \alpha_{ij}(t) d B_{jt},\\
		&\; i = 1, \ldots , k.
	\end{align*}
	In a matrix form, we have
	\begin{equation}\label{8}
		\left\{
		\begin{array}{ll}
			d Q_{t} & =[\eta (Z_t , t) - \dfrac{1}{2}\bigg (\Sigma(Z_t ,  t) \Sigma^{T}(Z_t , t)  + \alpha (t) \alpha^{T}(t) \bigg) ] dt + \Sigma (Z_t , t) d W_{t}  + \alpha(t) d B_{t}\\
			Q_{0} & = \ln s_{0}.
		\end{array}
		\right.
	\end{equation}
	Let us consider the non-linear filter defined by the conditional probability 
	\begin{equation}\label{9}
		\hat{Z}^{\psi}_{t} = \mathbb{E} \bigg [\psi (Z_{t}) \mid \mathcal{H}_{t} \bigg], \quad \forall t \in [0, T],
	\end{equation}
	where $\psi$ is a Borel bounded function on $\mathbb{R}^{m}$ \cite{bensoussan1982lectures}. Moreover, the non linear filter can be written as:
	\begin{equation}\label{10}
		\hat{Z}^{\psi}_{t} = \dfrac{ p (t, \psi) }{p (t, 1) }, \forall t \in [0, T] ,
	\end{equation}
	where $p(t, \psi)$ is called the unnormalized conditional probability which is characterized as the solution PDE known as the Kushner - Stratonovitch equation. We need to transform the process $(Q_{t})_{t \in [0, T]}$ into a Weiner process. For that, let us introduce the process $(\Gamma (t))_{t \in [0, T]}$ defined by 
	\begin{equation*}
		\left\{
		\begin{array}{ll}
			d \Gamma (t) & = - \Gamma (t) \ell^{T}(Z_t) \big (\alpha_{t} \mathbb{C}_{B} \alpha^{T}_{t} + \Sigma_{t} \mathbb{C}_{W} \Sigma^{T}_{t} \big)^{-1} \big (\alpha_{t} d W_{t}  + \Sigma_{t} d B_{t}\big) \\
			\Gamma (0) & = 1,
		\end{array}
		\right.
	\end{equation*}  
	where 
	\begin{eqnarray*}
		\ell(Z_t)& = &\eta (Z_t , t) - \dfrac{1}{2} \bigg (\Sigma (Z_t , t) \Sigma^{T}(Z_t , t) + \alpha_{t}\alpha^{T}_{t}\bigg).
	\end{eqnarray*}
	Explicitly,
	\begin{eqnarray*}
		\Gamma (t)& =& \exp \bigg [- \int_{0}^{t} \ell^{T}(Z_{s}) \big (\alpha_{s} \mathbb{C}_{B} \alpha^{T}_{s} + \Sigma_{s} \mathbb{C}_{W} \Sigma^{T}_{s}\big)^{-1} \big (\alpha_{s} d W_{s} + \Sigma_{s} d B_{s} \big)\\
		&&- \dfrac{1}{2} \int_{0}^{t} \ell^{T}(Z_s) \big (\alpha_{s} \mathbb{C}_{B}\alpha^{T}_{s} + \Sigma_{s} \mathbb{C}_{W}\Sigma^{T}_{s}  \big)^{-1} \ell(Z_s) ds \bigg].
	\end{eqnarray*}
	Consider the change of probability
	\begin{align*}
		\dfrac{d \tilde{P}_{r} }{d P_{r}} \bigg |_{\mathcal{F}^{t}} & = \Gamma (t) \\
		\dfrac{d P_{r} }{d \tilde{P}_{r}} \bigg |_{\mathcal{F}^{t}} & = \Lambda (t) = \dfrac{1}{\Gamma (t)},
	\end{align*}
	where
	\begin{equation*}
		\mathcal{F}^{t} = \sigma \big (W_s , B_s , s \leqslant t \big).
	\end{equation*}
	Let us then consider the process
	\begin{equation*}
		\tilde{W}_{t} = W_{t} + \int_{0}^{t} \mathbb{C}_{B} \alpha^{T}_{s} \big (\alpha_{s}\mathbb{C}_{B}\alpha^{T}_{s} + \Sigma_{s} \mathbb{C}_{W}\Sigma_{s}^{T} \big )^{-1} \ell (Z_{s}) ds. 
	\end{equation*}
	We have the following lemmas:
	\begin{lemma}\label{lem1}
		For the filtered probability space $(\Omega, \mathfrak{F}, \tilde{P}_{r}, \mathcal{F}^{t})$, the process $\big (\tilde{W}_{t} \big)_{t \in [0, T]}$ and $\big (Q_{t} \big)_{t \in [0, T]}$ are $\mathcal{F}^{t} - $Weiner process with covariance matrices $\mathbb{C}_{B}$ and $\alpha_{t} \mathbb{C}_{B}\alpha^{T}_{t} + \Sigma_{t} \mathbb{C}_{W} \Sigma_{t}^{T} $ respectively.
	\end{lemma}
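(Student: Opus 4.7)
The plan is to invoke Girsanov's theorem, with $\Gamma(t)$ playing the role of the Radon--Nikodym density, and then apply L\'evy's characterization to identify $\tilde{W}_t$ and $Q_t$ as Wiener processes under $\tilde{P}_r$. First I would verify that $\Gamma(t)$ is a true (not merely local) martingale on $[0,T]$. Since $\Gamma(t) = \mathcal{E}(-L)_t$ is the Dol\'eans--Dade exponential of the continuous local martingale
$$L_t = \int_0^t \ell^{T}(Z_s)\bigl(\alpha_s\mathbb{C}_B\alpha_s^{T} + \Sigma_s\mathbb{C}_W\Sigma_s^{T}\bigr)^{-1}\bigl(\alpha_s dW_s + \Sigma_s dB_s\bigr),$$
it suffices to check Novikov's condition $\mathbb{E}[\exp(\tfrac{1}{2}\langle L\rangle_T)]<\infty$. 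This will draw on the uniform lower bounds of \textbf{H2)} on $\Sigma\Sigma^{T}$ and $\gamma\gamma^{T}$, the positive-definiteness of $\mathbb{C}_B,\mathbb{C}_W$ from \textbf{H5)}, together with the Lipschitz/boundedness hypotheses \textbf{H1)}--\textbf{H3)}, which yield moment estimates on $\ell(Z_s)$ via standard estimates for the SDE \eqref{1}. Once $\Gamma$ is a martingale, $\tilde{P}_r$ is an equivalent probability measure on $\mathcal{F}^T$.

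Next I would apply Girsanov's theorem: any continuous $P_r$-local martingale $M$ becomes, under $\tilde{P}_r$, a semimartingale with drift $d\langle M, L\rangle_t$. The key computation is $d\langle W, L\rangle_t$ and $d\langle B, L\rangle_t$, obtained from the independence of $W$ and $B$ and their covariance matrices $\mathbb{C}_W$ and $\mathbb{C}_B$. After this calculation, the drift added to $W_t$ to make it a $\tilde{P}_r$-local martingale coincides with the deterministic (given $Z$) integral appearing in the definition of $\tilde{W}_t$. Hence $\tilde{W}_t$ is a continuous $\tilde{P}_r$-local martingale whose quadratic variation is inherited from $W_t$, namely $d\langle \tilde{W}\rangle_t = \mathbb{C}_B\, dt$ (the Girsanov shift is of finite variation and contributes nothing to the bracket). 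L\'evy's characterization then identifies $\tilde{W}_t$ as an $\mathcal{F}^t$-Wiener process with covariance $\mathbb{C}_B$ under $\tilde{P}_r$.

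For $Q_t$, I would use \eqref{8}, namely $dQ_t = \ell(Z_t)\,dt + \Sigma_t dW_t + \alpha_t dB_t$. Compute the Girsanov correction $d\langle Q^{\mathrm{mart}}, L\rangle_t$ where $Q^{\mathrm{mart}}$ is the martingale part; the structure of $\Gamma$ is engineered precisely so that this correction equals $-\ell(Z_t)\,dt$, cancelling the original drift. Thus $Q_t$ is a continuous $\tilde{P}_r$-local martingale. Its quadratic variation, computed directly from the martingale part using independence of $W$ and $B$, is
$$d\langle Q\rangle_t = \bigl(\alpha_t\mathbb{C}_B\alpha_t^{T} + \Sigma_t\mathbb{C}_W\Sigma_t^{T}\bigr)dt,$$
which gives the claimed covariance structure after a L\'evy-type argument (interpreting the statement as a continuous martingale with prescribed bracket).

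The step I expect to be most delicate is the verification that $\Gamma$ is a genuine martingale, since $\ell(Z_s)$ involves the possibly unbounded factor process $Z$; one may need to combine the smoothness/boundedness in \textbf{H1)}--\textbf{H3)} with a localization argument or an exponential moment bound for solutions of \eqref{1} to secure Novikov (or fall back on Kazamaki's criterion). Once that integrability issue is handled, the remainder is a routine Girsanov--L\'evy exercise.
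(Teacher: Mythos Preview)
Your Girsanov--L\'evy strategy is exactly the standard route and is presumably what lies behind the paper's proof, which consists solely of the citation ``See \cite{bonsoussan1992stochastic}'' with no further detail. In that sense your proposal is more complete than what the paper itself supplies, and the overall approach is correct.

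One small slip worth flagging: you write that the quadratic variation of $\tilde{W}$ is ``inherited from $W_t$, namely $d\langle \tilde{W}\rangle_t = \mathbb{C}_B\, dt$''. But $W$ has covariance $\mathbb{C}_W$, not $\mathbb{C}_B$, so the bracket inherited from $W$ would be $\mathbb{C}_W\,dt$. The lemma as stated asserts $\mathbb{C}_B$; this is almost certainly a typo in the paper (note also the mismatch between $\alpha\,dW+\Sigma\,dB$ in the definition of $\Gamma$ versus $\Sigma\,dW+\alpha\,dB$ in \eqref{8}). Your argument is internally consistent once one corrects these swapped labels, so this does not affect the soundness of your plan---just be aware that you are reproducing a likely misprint rather than an actual consequence of the computation.
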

	\begin{proof}
		See \cite{bonsoussan1992stochastic}
		$\square$
	\end{proof}
	The so called unnormalized conditional probability and the non linear filter can be written as:
	\begin{equation*}
		\hat{Z}_{t}^{\psi} = \dfrac{\tilde{\mathbb{E}} \bigg [\psi (Z_t) \Lambda (t) \big | \mathcal{H}_{t} \bigg]}{\tilde{ \mathbb{E}} \bigg [\Lambda (t) \big | \mathcal{H}_{t} \bigg]} = \dfrac{p (t, \psi)}{ p (t, 1)}
	\end{equation*}
	and 
	\begin{equation*}
		p(t, \psi) = \tilde{\mathbb{E}} \bigg [\psi (Z_t) \Lambda (t) \big | \mathcal{H}_{t} \bigg]
	\end{equation*}
	respectively, where $\tilde{\mathbb{E}} \bigg [\cdot\big | \mathcal{H}_{t} \bigg]$ is the conditional operator expectation under $\tilde{P}_{r}$. This leads us to the following proposition:
	\begin{proposition}
		We assume that:
		\begin{itemize}
			\item[$\bullet$]
			$h (x, t)$ and $\gamma (x, t)$ are Borel functions such that:
			\begin{align*}
				& \big | h(x_1 , t) - h(x_2 , t) \big | \leqslant k |x_1 - x_2|\\
				& \big | \big | \gamma (x_1 , t) - \gamma(x_2 , t) \big | \big | \leqslant k |x_1 - x_2|, \quad k\in \mathbb{R}, x_1 , x_2 \in \mathbb{R}^{m}
			\end{align*}
			$h (0, t)$ and $\gamma (0, t)$ are bounded and take $\ell (x, t)$ such that $|\ell (x, t) | \leqslant k (1 + |x|)$.
			\item[$\bullet$]
			$\big (\alpha_{ij}(t) \big)^{1 \leqslant i \leqslant K}_{1 \leqslant j \leqslant N}$ is a deterministic matrix
			\item[$\bullet$]
			$\psi (x, t) \in C^{2,1} \big (\mathbb{R}^{m} \times [0, + \infty) \big)$ is a Borel bounded function 
		\end{itemize}
		Let us set 
		\begin{equation*}
			a(x, t) = \dfrac{1}{2} \gamma (x, t) \mathbb{C}_{B} \gamma^{T}(x, t)
		\end{equation*}
		and 
		\begin{align*}
			\mathcal{A}(t) & = - f^{T} \triangledown - tr \big (a \triangledown^2 \big) \\
			& = - \sum\limits_{i} f_{i} \dfrac{\partial }{\partial x_i} - \sum\limits_{i,j} a_{ij} \dfrac{\partial^2}{\partial x_{i} \partial x_{j}}
		\end{align*}
		Then\\
		$p(t, \psi)$\\
		\begin{equation}\label{11}
			= \hat{Z}^{\psi}_{0} + \int_{0}^{t} p \big (s, \dfrac{\partial \psi}{\partial s} - \mathcal{A}(s) \psi (s) \big) ds + \int_{0}^{t} p \big (s, \ell^{T}(s,\psi (s)) + \triangledown \psi^{T}(s) \gamma (s) \mathbb{C}_{W}\alpha^{T}_{s}  \big) \big (\alpha \mathbb{C}_{B} \alpha^{T} + \Sigma \mathbb{C}_{W} \Sigma^{T} \big)^{-1} d Q_{s}
		\end{equation}
		a.s.
	\end{proposition}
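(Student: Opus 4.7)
The plan is to derive equation \eqref{11} as the Zakai-type equation for the unnormalized conditional density $p(t,\psi) = \tilde{\mathbb{E}}[\psi(Z_t)\Lambda(t)\mid \mathcal{H}_t]$ under the reference measure $\tilde{P}_r$. The argument proceeds in three stages: (i) apply It\^o's formula to the product $\psi(Z_t,t)\Lambda(t)$ on the filtered space $(\Omega,\mathfrak{F},\tilde{P}_r,\mathcal{F}^t)$; (ii) take the conditional expectation $\tilde{\mathbb{E}}[\cdot\mid \mathcal{H}_t]$ on both sides; (iii) invoke Lemma~\ref{lem1} to re-express the surviving martingale part as a stochastic integral against $dQ_s$.

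First, I would compute $d\Lambda(t)$ from the SDE for $\Gamma(t)$ by applying It\^o's lemma to $1/\Gamma$, producing a semimartingale decomposition whose martingale part is driven by $\alpha_s dW_s + \Sigma_s dB_s$ with integrand proportional to $\Lambda(s)\ell^T(Z_s)(\alpha \mathbb{C}_B \alpha^T + \Sigma \mathbb{C}_W \Sigma^T)^{-1}$. Applying It\^o to $\psi(Z_t,t)$ yields a drift $(\partial_s\psi + \mathcal{L}\psi)\,ds$, where $\mathcal{L} = -\mathcal{A}$ is the generator of $Z$, together with a martingale driven by $\nabla\psi^T\gamma(Z_s)\,dB_s$. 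The product rule then gives three contributions: the pure drift in $\psi$ (producing the $p(s,\partial_s\psi - \mathcal{A}(s)\psi(s))\,ds$ term after projection), the two martingale pieces, and the cross-variation $d\langle\psi(Z_\cdot),\Lambda\rangle_s$, which is exactly where the correlation coefficient between state and observation noise enters and generates the $\nabla\psi^T\gamma\mathbb{C}_W\alpha^T$ contribution in the integrand of $dQ_s$.

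Next, I would take $\tilde{\mathbb{E}}[\cdot\mid \mathcal{H}_t]$. The drift identifies as a $p(s,\cdot)\,ds$ integral by definition. For the martingale part, observe that under $\tilde{P}_r$ Lemma~\ref{lem1} tells us $Q_t$ is a Brownian motion with covariance $\alpha\mathbb{C}_B\alpha^T + \Sigma\mathbb{C}_W\Sigma^T$, so the combined $dW$- and $dB$-martingales collapse into a single stochastic integral against $dQ_s$ once their integrands are post-multiplied by $(\alpha\mathbb{C}_B\alpha^T + \Sigma\mathbb{C}_W\Sigma^T)^{-1}\alpha$ and $(\alpha\mathbb{C}_B\alpha^T + \Sigma\mathbb{C}_W\Sigma^T)^{-1}\Sigma$ respectively. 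The projection of the coefficient of $dQ_s$ onto $\mathcal{H}_s$ is then $p(s,\ell^T(s,\psi(s)) + \nabla\psi^T(s)\gamma(s)\mathbb{C}_W\alpha^T_s)(\alpha\mathbb{C}_B\alpha^T + \Sigma\mathbb{C}_W\Sigma^T)^{-1}$, using the tower property and the fact that $(\alpha\mathbb{C}_B\alpha^T + \Sigma\mathbb{C}_W\Sigma^T)^{-1}$ is deterministic via the $\alpha$-hypothesis and the structure of $\Sigma$.

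The main technical obstacle will be twofold: first, carefully tracking the cross-variation $d\langle\psi(Z_\cdot),\Lambda\rangle_s$ with the correct contraction by $\mathbb{C}_B$ and $\mathbb{C}_W$, since only the common $B$-noise contributes to the covariation between $Z$ and $\Lambda$; second, rigorously justifying the interchange of $\tilde{\mathbb{E}}[\cdot\mid\mathcal{H}_t]$ with the stochastic integral against $dQ_s$. The latter is standard under the Lipschitz and linear-growth conditions on $h,\gamma,\ell$ imposed in the hypothesis, and follows from the projection theorem for stochastic integrals as in Bensoussan's monograph; a localization or truncation argument handles the case where $\nabla\psi$ is unbounded, using the $C^{2,1}$ smoothness and boundedness of $\psi$.
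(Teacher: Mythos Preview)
Your proposal is correct and follows exactly the classical derivation of the Zakai equation in the correlated-noise setting: apply It\^o to $\psi(Z_t,t)\Lambda(t)$ under the reference measure, project onto $\mathcal{H}_t$, and use Lemma~\ref{lem1} to identify the martingale part as a $dQ$-integral. The paper does not give its own argument here; it simply refers to \cite{bonsoussan1992stochastic}, whose proof is precisely the three-step program you outline, so there is nothing to compare.
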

	\begin{proof}
		See \cite{bonsoussan1992stochastic}
		$\square$
	\end{proof}
	Now, we look for the  explicit solution of the Zakai equation in the linear case through theorem \ref{th1}:
	\begin{theorem}\label{th1}
		We consider the linear case where 
		\begin{align*}
			h(x, t) & = D(t) x + d (t) \\
			\gamma (x, t) & = \gamma (t) \\
			\eta (x, t) & = U(t) x + u(t)\\
			\Sigma (x, t) & = \Sigma(t).
		\end{align*}
		$D(t),  d(t), U(t), u(t)$ are deterministic and bounded. Then 
		\begin{enumerate}
			\item[\textbf{1)}]
			\eqref{1} et \eqref{8} become:
			\begin{equation*}
				\left\{
				\begin{array}{ll}
					d Z_{t} & = \big (D(t) Z_{t} + d(t) \big) dt + \gamma d B_{t} \\
					Z_{0} & = z \in \mathbb{R}^{m}
				\end{array}
				\right.
			\end{equation*}
			and 
			\begin{equation*}
				\left\{
				\begin{array}{ll}
					d Q_{t} & = \big (U(t) Z_{t} + u^{1}(t) \big) dt + \alpha(t) d B_{t} + \Sigma_{t} d W_{t} \\
					Q_{0} & = \ln s_{0} \in \mathbb{R}^{k}
				\end{array}
				\right.
			\end{equation*}
			respectively, where 
			\begin{equation*}
				u^{1}(t) = u(t) - \dfrac{1}{2} \bigg (\Sigma(t) \Sigma^{T}(t) + \alpha(t)\alpha^{T}(t) \bigg).
			\end{equation*}
			\item[\textbf{2)}]
			The unique solution of the Zakai equation \eqref{11} is given by:
			\begin{equation}\label{12}
				\hat{p}\big (t, \psi \big) = \bigg [\int \psi \big (\hat{z}^{\psi}_{t} + P^{1/2}_{t} x \big ) \dfrac{e^{-(1/2) x^2} }{(2N)^{N/2}} dx \bigg] s_{t},
			\end{equation}
			where $P(t) = \mathbb{E} \bigg [(Z_t - \hat{Z}^{\psi}_{t} ) (Z_t - \hat{Z}^{\psi}_{t} )^{T} \bigg]$ is the solution of the Riccati equation
			\begin{equation}\label{13}
				\left\{
				\begin{array}{ll}
					& \dot{P} - \big (PU^{T} + \mathbb{C}_{B}\alpha^{T} \big) \big (\alpha \mathbb{C}_{B} \alpha + \Sigma \mathbb{C}_{W} \Sigma^{T} \big)^{-1} \big (UP + \alpha \mathbb{C}_{B} \big) + \big (PU^{T} + \mathbb{C}_{B} \alpha^{T} \big) \big (\alpha \mathbb{C}_{B} \alpha^{T} + \Sigma \mathbb{C}_{W} \Sigma^{T} \big )^{-1}\\
					&  \alpha \mathbb{C}_{B}\gamma^{T} + \gamma \mathbb{C}_{B}\alpha^{T} \big (\alpha \mathbb{C}_{B}\alpha^{T} + \Sigma \mathbb{C}_{W} \Sigma^{T}  \big )^{-1} \big (UP + \alpha \mathbb{C}_{B} \big) - \gamma \mathbb{C}_{B}\gamma^{T} - DP - PD^{T} = 0\\
					& P(0) = P_{0}.  
				\end{array}
				\right.
			\end{equation}
			The Kalman filter $\hat{Z}^{\psi}_{t}$ is given by the equation 
			\begin{equation}\label{14}
				\left\{
				\begin{array}{ll}
					d \hat{Z}^{\psi}_{t} & = \big (D  \hat{Z}^{\psi}_{t} + d \big) dt + \big (PU^{T} + \mathbb{C}_{B} \alpha^{T} \big) \big (\alpha \mathbb{C}_{B} \alpha^{T} + \Sigma \mathbb{C}_{W}\Sigma^{T} \big)^{-1} d \nu_{t}\\
					\hat{Z}^{\psi}(0) & = z_{0} 
				\end{array}
				\right.
			\end{equation}
			with 
			\begin{equation}\label{141}
				d \nu_{t} = d Q_{t} - \big (U \hat{Z}^{\psi}_{t} + u^{1} \big)
			\end{equation}
			and the process $s_{t}$ is given by:
			\begin{eqnarray*}
				s_{t} &= &\exp \bigg [\int_{0}^{t}\big ( \hat{Z}^{\psi^T}_{s} U^{T} + u^{1^T} \big)\big (\alpha \mathbb{C}_{B} \alpha^{T} + \Sigma \mathbb{C}_{W} \Sigma^{T} \big)^{-1}d Q\\
				&&- \dfrac{1}{2}  \int_{0}^{t} \big ( \hat{Z}^{\psi^T}_{s} U^{T} + u^{1^T} \big)\big (\alpha \mathbb{C}_{B} \alpha^{T} + \Sigma \mathbb{C}_{W} \Sigma^{T} \big)^{-1} \big (U \hat{Z}^{\psi}_{s}  + u^{1} \big)ds \bigg]
			\end{eqnarray*}
		\end{enumerate}
	\end{theorem}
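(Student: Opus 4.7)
Part 1 is immediate: substituting the linear specifications $h(x,t)=D(t)x+d(t)$, $\gamma(x,t)=\gamma(t)$, $\eta(x,t)=U(t)x+u(t)$, $\Sigma(x,t)=\Sigma(t)$ into \eqref{1} and \eqref{8} yields the displayed SDEs, with the It\^o correction absorbed into $u^{1}(t)=u(t)-\tfrac12(\Sigma\Sigma^{T}+\alpha\alpha^{T})$.

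The plan for Part 2 is to exploit the linear--Gaussian structure. Because $(Z_t,Q_t)$ is jointly Gaussian, the conditional law of $Z_t$ given $\mathcal{H}_t$ must itself be Gaussian with some mean $\hat{Z}^{\psi}_t$ and covariance $P(t)$, so the ansatz \eqref{12} should hold with the factor $s_t$ absorbing the normalizing mass of the unnormalized density $p(t,\cdot)$. First I would introduce the innovation $d\nu_t = dQ_t-(U\hat{Z}^{\psi}_t+u^{1})dt$ and, using Lemma \ref{lem1} together with L\'evy's characterization, check that $\nu_t$ is an $\mathcal{H}_t$-Brownian motion under $\tilde P_r$ with covariance $\alpha\mathbb{C}_B\alpha^{T}+\Sigma\mathbb{C}_W\Sigma^{T}$. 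Next I would derive the filter \eqref{14} and the Riccati equation \eqref{13} by the standard orthogonal-projection method applied to the error $e_t=Z_t-\hat{Z}^{\psi}_t$: It\^o on $e_t$ together with the computation of $d\langle e,\nu\rangle_t$ identifies the Kalman gain as $(PU^{T}+\mathbb{C}_B\alpha^{T})(\alpha\mathbb{C}_B\alpha^{T}+\Sigma\mathbb{C}_W\Sigma^{T})^{-1}$, while It\^o on $e_te_t^{T}$ followed by taking expectations produces \eqref{13}. Existence and uniqueness of $P(t)$ on $[0,T]$ then follow from H2, H5, and classical matrix Riccati theory.

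With $\hat{Z}^{\psi}_t$ and $P(t)$ in hand, the final step is to verify that the ansatz \eqref{12} satisfies the Zakai equation \eqref{11}. I would apply It\^o's formula to the product of the Gaussian expectation (with moving mean $\hat{Z}^{\psi}_t$ and deterministic covariance $P(t)$) and the Dol\'eans-Dade exponential $s_t$, then match the drift and stochastic coefficients with those in \eqref{11} after an integration by parts against $\mathcal{A}(t)\psi$; the explicit form of $s_t$ is precisely what makes the $dQ_t$-integrands coincide with the term $p(s,\ell^{T}\psi+\triangledown\psi^{T}\gamma\mathbb{C}_W\alpha^{T})(\alpha\mathbb{C}_B\alpha^{T}+\Sigma\mathbb{C}_W\Sigma^{T})^{-1}$ appearing in \eqref{11}. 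Uniqueness of the solution then follows from the classical uniqueness for the Zakai equation under the regularity stated in the preceding proposition, cf.\ \cite{bonsoussan1992stochastic}.

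The hard part will be the careful bookkeeping of the $\mathbb{C}_B$-correlation terms shared between the signal $Z_t$ and the observation $Q_t$ through the matrix $\alpha(t)$. These produce the non-standard cross-terms $\gamma\mathbb{C}_B\alpha^{T}+\alpha\mathbb{C}_B\gamma^{T}$ inside the Riccati equation \eqref{13} and the extra $\mathbb{C}_B\alpha^{T}$ in the Kalman gain; keeping $\nu_t$ orthogonal in $L^{2}(\tilde P_r)$ to the $\mathcal{H}_t$-adapted integrands while simultaneously tracking these cross terms is the delicate step that the uncorrelated Kalman--Bucy argument does not encounter.
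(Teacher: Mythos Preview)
Your plan is essentially the same verification strategy the paper uses: apply It\^o's formula to the product of the Gaussian integral (with moving mean $\hat Z^{\psi}_t$ and covariance $P(t)$) and the exponential $s_t$, then match the drift and $dQ_t$-coefficients against the Zakai equation \eqref{11} after a Gaussian integration by parts. Two points of comparison are worth noting.

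First, the paper isolates as a separate lemma (Lemma~\ref{lm2}) the fact that $\mathbb{E}[s^1_t]=1$, i.e.\ that the reciprocal $s^1_t=1/s_t$ is a true martingale, proved by a localization $\frac{s^1_t|\hat Z^{\psi}_t|^2}{1+\epsilon s^1_t|\hat Z^{\psi}_t|^2}$ plus Gronwall argument. This is what makes the change of measure legitimate and identifies $p(t,1)=s_t$; you allude to ``$s_t$ absorbing the normalizing mass'' but do not flag that this martingale property needs proof. Since $\ell(Z_t)$ has linear growth here, Novikov is not directly available, so this step is genuinely needed.

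Second, you propose to \emph{derive} \eqref{14} and \eqref{13} via orthogonal projection on $e_t=Z_t-\hat Z^{\psi}_t$, whereas the paper simply postulates them as part of the statement of Theorem~\ref{th1} and checks \eqref{12} directly; the Riccati equation is re-derived separately in a later Proposition by computing $d(e_te_t^{T})$. Your route is more self-contained, the paper's is shorter; either is fine.

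One minor slip: the innovation $\nu_t$ is an $\mathcal{H}_t$-Wiener process under the \emph{original} measure $P_r$ (with covariance $\alpha\mathbb{C}_B\alpha^{T}+\Sigma\mathbb{C}_W\Sigma^{T}$), not under $\tilde P_r$; under $\tilde P_r$ it is $Q_t$ itself that is the Wiener process (Lemma~\ref{lem1}).
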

	To prove theorem \ref{th1}, we need to state and prove the lemma \ref{lm2}.
	\begin{lemma}\label{lm2}
		Consider the filtered probability space $\big (\Omega , \mathcal{F} , P_{r} , \mathcal{F}^{t} \big)$ and the processes defined in the linear case as in theorem \ref{th1}.\par
		Let $\big (s^{1}_{t} \big)_{t \in [0, T]}$ be the process defined by:
		\begin{align*}
			s^{1}_{t} & = \exp \bigg [- \int_{0}^{t}\big ( \hat{Z}^{\psi^T}_{s} U^{T} + u^{1^T} \big)\big (\alpha \mathbb{C}_{B} \alpha^{T} + \Sigma \mathbb{C}_{W} \Sigma^{T} \big)^{-1}\big (\alpha_{s} d B_{s} + \Sigma_{s} d W_{s} \big) \\
			& - \dfrac{1}{2}  \int_{0}^{t} \big ( \hat{Z}^{\psi^T}_{s} U^{T} + u^{1^T} \big)\big (\alpha \mathbb{C}_{B} \alpha^{T} + \Sigma \mathbb{C}_{W} \Sigma^{T} \big)^{-1} \big (U \hat{Z}^{\psi}_{s}  + u^{1} \big)ds \bigg].
		\end{align*}
		Then one has 
		\begin{equation*}
			\mathbb{E} \big [s^{1}_{t} \big] = 1.
		\end{equation*}
	\end{lemma}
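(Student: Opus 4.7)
\textbf{Proof plan for Lemma \ref{lm2}.}

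The strategy is to identify $s^{1}_{t}$ as a Doléans-Dade exponential of a local martingale under $P_{r}$ and then upgrade the resulting supermartingale property to a true martingale via Novikov's criterion. Writing
\[
\phi_{s} := \bigl(\hat{Z}^{\psi^{T}}_{s} U^{T} + u^{1^{T}}\bigr)\bigl(\alpha_{s} \mathbb{C}_{B}\alpha_{s}^{T} + \Sigma_{s}\mathbb{C}_{W}\Sigma_{s}^{T}\bigr)^{-1},
\]
I set $M_{t} := -\int_{0}^{t}\phi_{s}\,(\alpha_{s}\,dB_{s} + \Sigma_{s}\,dW_{s})$, which is a local martingale because $B$ and $W$ are Brownian motions under $P_{r}$ and $\phi_{s}$ is $\mathcal{F}^{s}$-adapted. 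The first step is then to verify the algebraic identity
\[
d\langle M\rangle_{s} = \phi_{s}\bigl(\alpha_{s}\mathbb{C}_{B}\alpha_{s}^{T} + \Sigma_{s}\mathbb{C}_{W}\Sigma_{s}^{T}\bigr)\phi_{s}^{T}\,ds = \phi_{s}\bigl(U\hat{Z}^{\psi}_{s} + u^{1}\bigr)\,ds,
\]
which uses the independence of $B$ and $W$ (so cross terms vanish), the symmetry of the weighting matrix, and a straightforward cancellation. Having verified this, $s^{1}_{t}$ is exactly $\mathcal{E}(M)_{t} = \exp(M_{t} - \tfrac{1}{2}\langle M\rangle_{t})$, hence a positive local martingale and a supermartingale with $\mathbb{E}[s^{1}_{t}] \leqslant 1$.

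The second step is to upgrade to equality. For this I would invoke Novikov's condition: it suffices to show
\[
\mathbb{E}\Bigl[\exp\Bigl(\tfrac{1}{2}\int_{0}^{T}\phi_{s}\bigl(U\hat{Z}^{\psi}_{s} + u^{1}\bigr)\,ds\Bigr)\Bigr] < \infty.
\]
Under the hypotheses of Theorem \ref{th1} the coefficients $D, d, U, u, \Sigma, \alpha$ are bounded and deterministic, and by assumption \textbf{H2} the matrix $\alpha\mathbb{C}_{B}\alpha^{T} + \Sigma\mathbb{C}_{W}\Sigma^{T}$ is uniformly elliptic, so its inverse is bounded. Consequently the integrand is dominated by a quadratic form in $\hat{Z}^{\psi}_{s}$ with bounded coefficients. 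Since $\hat{Z}^{\psi}$ solves a linear SDE driven by the innovation process with bounded drift and Gaussian initial condition $\mathcal{N}(z_{0},P_{0})$, it is a Gaussian process with bounded mean and bounded covariance on $[0,T]$, and therefore admits finite exponential moments of every order $\mathbb{E}[\exp(\kappa|\hat{Z}^{\psi}_{s}|^{2})] < \infty$ for sufficiently small $\kappa$.

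The final step is to turn the pointwise exponential bound into the global one required by Novikov. If the constants happen to be small enough on $[0,T]$, Novikov applies directly; otherwise I would partition $[0,T]$ into finitely many subintervals $0 = t_{0} < t_{1} < \cdots < t_{n} = T$ on each of which Novikov's condition can be verified with the chosen $\kappa$, and then iterate the martingale property $\mathbb{E}[s^{1}_{t_{k+1}}\mid \mathcal{F}^{t_{k}}] = s^{1}_{t_{k}}$ to conclude $\mathbb{E}[s^{1}_{T}] = 1$. The main obstacle I anticipate is precisely this exponential-moment control: making sure the constant appearing in the quadratic form in $\hat{Z}^{\psi}_{s}$ stays within the range in which Gaussian exponential moments remain finite, which is what forces the subdivision argument (or, alternatively, a Kazamaki-type criterion applied to $M$) rather than a one-shot application of Novikov.
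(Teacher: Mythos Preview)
Your argument is correct and complete as a proof strategy, but it proceeds quite differently from the paper. You recognise $s^{1}_{t}$ as the Dol\'eans--Dade exponential of a continuous local martingale and then close the gap from supermartingale to martingale via Novikov's criterion, exploiting the Gaussianity of the Kalman filter $\hat{Z}^{\psi}$ (linear SDE with deterministic bounded coefficients driven by the innovation Brownian motion) to obtain the exponential quadratic moment, with the standard subdivision trick to handle the constant. This is clean and leans on the specific linear--Gaussian structure of the problem.

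The paper instead follows a Bensoussan-style localisation that avoids Novikov entirely. It first establishes the auxiliary bound $\mathbb{E}\bigl[s^{1}_{t}\,|\hat{Z}^{\psi}_{t}|^{2}\bigr]\leqslant C$ by applying It\^o's formula to the truncated quantity $\dfrac{s^{1}_{t}|\hat{Z}^{\psi}_{t}|^{2}}{1+\epsilon\,s^{1}_{t}|\hat{Z}^{\psi}_{t}|^{2}}$, deriving a differential inequality, and invoking Gronwall before letting $\epsilon\to 0$. With this moment bound in hand, it computes $d\bigl(\tfrac{s^{1}_{t}}{1+\epsilon s^{1}_{t}}\bigr)=\tfrac{ds^{1}_{t}}{(1+\epsilon s^{1}_{t})^{2}}$, takes expectations to get $\mathbb{E}\bigl[\tfrac{s^{1}_{t}}{1+\epsilon s^{1}_{t}}\bigr]=\tfrac{1}{1+\epsilon}$, and again sends $\epsilon\to 0$. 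The trade-off: your route is shorter and more transparent in the linear setting where Gaussianity is available for free, while the paper's truncation argument does not rely on the law of $\hat{Z}^{\psi}$ being Gaussian and would extend more readily to the nonlinear-filter framework the paper ultimately has in view.
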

	\begin{proof}
		Let us first check that
		\begin{equation*}
			\mathbb{E} \bigg [s^{1}_{t} | \hat{Z}^{\psi}_{t}|^2 \bigg] < C.
		\end{equation*}
		By Itô's formula, we have 
		\begin{align*}
			&d \big (|\hat{Z}^{\psi}_{t} |^2 \big)\\  & = 2  \hat{Z}^{\psi^T}_{t}\cdot d  \hat{Z}^{\psi}_{t} + tr \bigg (d  \hat{Z}^{\psi}_{t} \cdot d \hat{Z}^{\psi^T}_{t} \bigg) \\
			& = 2  \hat{Z}^{\psi^T}_{t} \bigg [\big (D  \hat{Z}^{\psi}_{t} + d \big)dt + \big (PU^{T} + \mathbb{C}_{B} \alpha^T \big)\big (\alpha \mathbb{C}_{B} \alpha^T + \Sigma \mathbb{C}_{W} \Sigma^{T} \big)^{-1} \big (\alpha d B_{t} + \Sigma d W_{t}\big)   \bigg]\\
			& + tr \bigg \{ \bigg [  \big (D  \hat{Z}^{\psi}_{t} + d \big)dt + \big (PU^{T} + \mathbb{C}_{B} \alpha^T \big)\big (\alpha \mathbb{C}_{B} \alpha^T + \Sigma \mathbb{C}_{W} \Sigma^{T} \big)^{-1} \big (\alpha d B_{t} + \Sigma d W_{t}\big)   \bigg] \bigg [ \\
			&  \big (D^{T}\hat{Z}^{\psi^T}_{t} + d^T \big )dt +  \big (d B^{T}_{t}\cdot \alpha^T + d W^{T}_{t} \cdot \Sigma^T \big) \big (\alpha \mathbb{C}_{B}\alpha^T + \Sigma \mathbb{C}_{W} \Sigma^T \big )^{-1} \big ( UP^T + \alpha \mathbb{C}_{B} \big) \bigg ]\bigg \}\\
			& = 2 \hat{Z}^{\psi}_{t} \bigg [\big (D \hat{Z}^{\psi}_{t} + d \big) dt +  \big (PU^T + \mathbb{C}_{B} \alpha^T \big)\big (\alpha \mathbb{C}_{B} \alpha^T + \Sigma \mathbb{C}_{W} \Sigma^T \big)^{-1} \big (\alpha d B_{t} + \Sigma d W_{t} \big) \bigg] + \\
			& tr \bigg [\big (PU^T + \mathbb{C}_{B} \alpha^T \big) \big (\alpha \mathbb{C}_{B}\alpha^T + \Sigma \mathbb{C}_{W} \Sigma^T \big)^{-1} \big (\alpha \mathbb{C}_{B}\alpha^T + \Sigma \mathbb{C}_{W} \Sigma^T \big)\big (\alpha \mathbb{C}_{B}\alpha^T + \Sigma \mathbb{C}_{W} \Sigma^T \big)^{-1} \\
			&\big (UP^T + \alpha \mathbb{C}_{B} \big) \bigg] dt \\
			& = 2 \hat{Z}^{\psi^T}_{t} \bigg [\big (D \hat{Z}^{\psi^T}_{t} + d \big)dt + \sigma \big (\alpha d B_{t} + \Sigma d W_{t} \big) \bigg] + tr \big (\sigma M \sigma^T \big) dt ,
		\end{align*}
		where 
		\begin{align*}
			\sigma & = \big (PU^T + \mathbb{C}_{B} \alpha^T \big) \big (\alpha \mathbb{C}_{B} \alpha^T + \Sigma \mathbb{C}_{W}\Sigma^T \big)^{-1}\\
			M & = \alpha \mathbb{C}_{B}\alpha^{T} + \Sigma \mathbb{C}_{W} \Sigma^{T}.
		\end{align*}
		But
		\begin{equation*}
			d s^{1}_{t} = - \big (\hat{Z}^{\psi^T}_{t} U^{T} + u^{1^T} \big) \big (\alpha \mathbb{C}_{B} \alpha^T + \Sigma \mathbb{C}_{B} \Sigma^{T} \big) \big (\alpha s^{1}_{t} d B_{t} + \Sigma s^{1}_{t} d W_{t}\big).
		\end{equation*}
		Hence
		\begin{align*}
			&d \big (s^{1}_{t} | \hat{Z}^{\psi}_{t}|^2 \big)\\  = & \big (d s^{1}_{t}\big) | \hat{Z}^{\psi}_{t}|^2 + s^{1}_{t} \cdot d (| \hat{Z}^{\psi}_{t}|^2) + \big (d s^{1}_{t} \big)\cdot \big (d \big(| \hat{Z}^{\psi}_{t}|^2 \big) \big)\\
			= & -  s^{1}_{t} | \hat{Z}^{\psi}_{t}|^2 \big (\hat{Z}^{\psi^T}_{t} U^T + u^{1^T} \big)\big (\alpha \mathbb{C}_{B} \alpha^T + \Sigma \mathbb{C}_{W} \Sigma^T \big)^{-1} \big (\alpha d B_{t} + \Sigma d W_{t} \big)\\
			& + 2 s^{1}_{t} \hat{Z}^{\psi^T}_{t} \bigg [\big (D \hat{Z}^{\psi}_{t} + d \big)dt + \sigma \big (\alpha d B_{t} + \Sigma d W_{t} \big) \bigg] +\\
			& s^{1}_{t} tr \big (\sigma M \sigma^T \big)dt - 2 s^{1}_{t} \big (\hat{Z}^{\psi^T}_{t} U^T + u^{1^T} \big)\big (\alpha \mathbb{C}_{B}\alpha^T + \Sigma \mathbb{C}_{W} \Sigma^T \big)^{-1} \hat{Z}^{\psi^T}_{t}\sigma \big (|\alpha|^2 + |\Sigma|^2 \big) dt\\
			& = - s^{1}_{t} | \hat{Z}^{\psi}_{t}|^{2} \big (\hat{Z}^{\psi^T}_{t} U^{T} + u^{1^T} \big)\big (\alpha \mathbb{C}_{B} \alpha^T + \Sigma \mathbb{C}_{W} \Sigma^T \big)^{-1} \big (\alpha d B_{t} + \Sigma d W_{t} \big) + \\
			& s^{1}_{t} \bigg [2 \hat{Z}^{\psi^T}_{t} \big (D \hat{Z}^{\psi}_{t}  +  d\big) + tr (\sigma M \sigma^T) \bigg] dt + 2 s^{1}_{t} \hat{Z}^{\psi^T}_{t} \sigma \big (\alpha d B_t + \Sigma d W_t \big)\\
			& -2 s^{1}_{t} \big (\hat{Z}^{\psi^T}_{t} U^{T} + u^{1^T} \big) \big (\alpha \mathbb{C}_{B} \alpha^{-1} + \Sigma \mathbb{C}_{W} \Sigma^{T} \big)^{-1} \hat{Z}^{\psi^T}_{t} \sigma \big (| \alpha|^2 + |\Sigma|^2 \big) dt \\
			& = s^{1}_{t} \bigg [- |\hat{Z}^{\psi^T}_{t}|^2 \big (\hat{Z}^{\psi^T}_{t} U^T + u^{1^T} \big) \big (\alpha \mathbb{C}_{B} \alpha^T + \Sigma \mathbb{C}_{W}\Sigma^{T} \big)^{-1} + 2 \hat{Z}^{\psi^T}_{t} \sigma\bigg] \big (\alpha d B_{t} + \Sigma d W_{t} \big) + \\
			& s^{1}_{t} \bigg [2 \hat{Z}^{\psi^T}_{t} \big (D \hat{Z}^{\psi}_{t}  + d\big) + tr (\sigma M \sigma^T) - 2 \big (\hat{Z}^{\psi^T}_{t} U^{T} + u^{1^T}\big)\big (\alpha \mathbb{C}_{B} \alpha^T + \Sigma \mathbb{C}_{B}\Sigma^{T} \big)^{-1} \hat{Z}^{\psi^T}_{t} \sigma \big (|\alpha|^2 + |\Sigma|^2 \big) \bigg] dt.
		\end{align*}
		Now, $\forall \epsilon > 0$, we have
		\begin{align*}
			&d \bigg (\dfrac{s^{1}_{t} | \hat{Z}^{\psi}_{t}|^2 }{1 + \epsilon s^{1}_{t} |\hat{Z}^{\psi}_{t}|^2 } \bigg)\\ & =\dfrac{1}{\bigg (1 + \epsilon s^{1}_{t}|\hat{Z}^{\psi}_{t}|^2 \bigg)^2} \bigg \{  \bigg \{   s^{1}_{t} \bigg [ -|\hat{Z}^{\psi}_{t}|^{2} \big (\hat{Z}^{\psi^T}_{t} U^T + u^{1^T} \big) \big (\alpha \mathbb{C}_{B}\alpha^T + \Sigma \mathbb{C}_{W} \Sigma^{T} \big)^{-1} + 2 \hat{Z}^{\psi^T}_{t} \sigma  \bigg]\big (\alpha d B_{t} + \Sigma d W_{t} \big) \\
			& + s^{1}_{t} \bigg [2 \hat{Z}^{\psi^T}_{t} ( D \hat{Z}^{\psi}_{t} + d) + tr (\sigma M \sigma^T) - 2 \big (\hat{Z}^{\psi^T}_{t} U^T + u^{1^T} \big) \big (\alpha \mathbb{C}_{B} \alpha^T + \Sigma \mathbb{C}_{B} \Sigma^T \big)^{-1} \hat{Z}^{\psi^T}_{t} \sigma \big (|\alpha|^2 + |\Sigma|^2 \big) \bigg] dt \bigg \} \\
			&  \big (1 + \epsilon s^{1}_{t} |\hat{Z}^{\psi}_{t}|^2 \big) - \epsilon \bigg \{ s^{1}_{t} \bigg [- |\hat{Z}^{\psi}_{t}|^2 \big (\hat{Z}^{\psi^T}_{t} U^T + u^{1^T} \big) \big (\alpha \mathbb{C}_{B} \alpha^T + \Sigma \mathbb{C}_{B} \Sigma^T \big)^{-1}  + 2 \hat{Z}^{\psi^T}_{t} \sigma \bigg] \big (\alpha d B_{t} + \Sigma d W_{t}\big) \\
			& + s^{1}_{t} \bigg [2 \hat{Z}^{\psi^T}_{t} \big (D \hat{Z}^{\psi}_{t}  + d\big) + tr (\sigma M \sigma^T) - 2 \big (\hat{Z}^{\psi^T}_{t} U^{T} + u^{1^T} \big) \big (\alpha \mathbb{C}_{B} \alpha^T + \Sigma \mathbb{C}_{W} \Sigma^T \big)^{-1} \hat{Z}^{\psi^T}_{t} \sigma \big (|\alpha|^2 +\\
			& |\Sigma|^2 \big)  \bigg] dt \bigg \}  s^{1}_{t} |\hat{Z}^{\psi}_{t}|^2 \bigg \}.
		\end{align*}
		$\Rightarrow$ Integrating between $0$ and $t$, we take expectation, which yields 
		\begin{align*}
			& \mathbb{E} \bigg [\int_{0}^{t} d \bigg (\dfrac{s^{1}_{t} | \hat{Z}^{\psi}_{s}|^2 }{1 + \epsilon s^{1}_{s} |\hat{Z}^{\psi}_{s}|^2 } \bigg)  \bigg] \leqslant \mathbb{E} \bigg [ \int_{0}^{t}  \dfrac{   s^{1}_{s}  \big (2\hat{Z}^{\psi^T}_{s} ( D \hat{Z}^{\psi}_{s}  + d) +tr (\sigma M \sigma^T)   \big) }{ 1 + \epsilon s^{1}_{s} |\hat{Z}^{\psi}_{s}|^2 } ds\bigg]\\
			& \mathbb{E} \bigg (\dfrac{ s^{1}_{t} |\hat{Z}^{\psi}_{t}|^2}{1 + \epsilon s^{1}_{t} |\hat{Z}^{\psi}_{t}|^2}  \bigg) - \dfrac{ |z^{\psi}_{0}|^2}{1 + \epsilon |z^{\psi}_{0}|^2} \leqslant  \mathbb{E} \bigg [ \int_{0}^{t}  \dfrac{   s^{1}_{s}  \big (2\hat{Z}^{\psi^T}_{s} ( D \hat{Z}^{\psi}_{s}  + d) +tr (\sigma M \sigma^T)   \big) }{ 1 + \epsilon s^{1}_{s} |\hat{Z}^{\psi}_{s}|^2 } ds\bigg]\\
			& \dfrac{d}{dt} \mathbb{E} \bigg (\dfrac{ s^{1}_{t} |\hat{Z}^{\psi}_{t}|^2}{1 + \epsilon s^{1}_{t} |\hat{Z}^{\psi}_{t}|^2}  \bigg) \leqslant \mathbb{E} \bigg [ \dfrac{ s^{1}_{t}  \big (2\hat{Z}^{\psi^T}_{t} ( D \hat{Z}^{\psi}_{t}  + d) +tr (\sigma M \sigma^T)   \big) }{ 1 + \epsilon s^{1}_{t} |\hat{Z}^{\psi}_{t}|^2 }  \bigg].
		\end{align*}
		Since $\mathbb{E} (s^{1}_{t}) \leqslant 1$ \citep{bonsoussan1992stochastic}, It follows that
		\begin{align*}
			& \dfrac{d}{dt} \mathbb{E} \bigg (\dfrac{s^{1}_{t} |\hat{Z}^{\psi}_{t}|^2  }{1 + \epsilon s^{1}_{t} |\hat{Z}^{\psi}_{t}|^2 }\bigg) \leqslant K_{1} \bigg (\mathbb{E} \bigg (\dfrac{s^{1}_{t} |\hat{Z}^{\psi}_{t}|^2  }{1 + \epsilon s^{1}_{t} |\hat{Z}^{\psi}_{t}|^2 }\bigg) + 1\bigg) \\
			\Rightarrow & \mathbb{E} \bigg (\dfrac{s^{1}_{t} |\hat{Z}^{\psi}_{t}|^2  }{1 + \epsilon s^{1}_{t} |\hat{Z}^{\psi}_{t}|^2 }\bigg) \leqslant C, \quad \forall t \in [0, T].
		\end{align*}
		Hence 
		\begin{align*}
			& \mathbb{E} \bigg [\lim_{\epsilon \to 0} \dfrac{s^{1}_{t} |\hat{Z}^{\psi}_{t}|^2  }{1 + \epsilon s^{1}_{t} |\hat{Z}^{\psi}_{t}|^2 } \bigg] \leqslant \lim_{\epsilon \to 0} \mathbb{E} \bigg (\dfrac{s^{1}_{t} |\hat{Z}^{\psi}_{t}|^2  }{1 + \epsilon s^{1}_{t} |\hat{Z}^{\psi}_{t}|^2 }\bigg) \leqslant C \\
			\Rightarrow & \mathbb{E} \big (s^{1}_{t} | \hat{Z}^{\psi}_{t} |^2 \big) \leqslant C, \quad \forall t \in [0, T].
		\end{align*}
		Next, we have 
		\begin{align*}
			d \bigg (\dfrac{s^{1}_{t} }{1 + \epsilon s^{1}_{t}} \bigg) & = \dfrac{ (d s^{1}_{t} ) \big (1 + \epsilon s^{1}_{t} \big) - s^{1}_{t} d \big (1 + \epsilon s^{1}_{t} \big)}{\big (1 + \epsilon s^{1}_{t} \big)^2}\\
			& = \dfrac{ d s^{1}_{t}  + \epsilon s^{1}_{t} d s^{1}_{t} - \epsilon s^{1}_{t} d s^{1}_{t} }{ \big (1 + \epsilon s^{1}_{t} \big)^2} \\
			& = \dfrac{ d s^{1}_{t} }{\big (1 + \epsilon s^{1}_{t} \big)^2}
		\end{align*}
		\begin{align*}
			& \int_{0}^{t} d \bigg (\dfrac{s^{1}_{u} }{1 + \epsilon s^{1}_{u}} \bigg)  = \int_{0}^{t} \dfrac{d s^{1}_{u} }{\big (1 + \epsilon s^{1}_{u} \big)^2 } \\
			& \dfrac{s^{1}_{t}}{ 1 + \epsilon s^{1}_{t} } - \dfrac{1}{1 + \epsilon} = - \int_{0}^{t} \dfrac{ \big (\hat{Z}^{\psi^T}_{u} U^T + u^{1^T} \big)\big (\alpha \mathbb{C}_{B} \alpha^{T} + \Sigma \mathbb{C}_{W} \Sigma^T \big)^{-1} \big ( \alpha d B_u + \Sigma d W_u\big)}{ \big (1 + \epsilon s^{1}_{u} \big)^2}\\
			\Rightarrow & \mathbb{E} \bigg (\dfrac{s^{1}_{t}}{1 + \epsilon s^{1}_{t}} \bigg) = \dfrac{1}{1 + \epsilon}. 
		\end{align*}
		Since $\mathbb{E} (s^{1}_{t}) < 1, $ then 
		\begin{equation*}
			\lim_{\epsilon \to 0} \mathbb{E} \bigg (\dfrac{s^{1}_{t}}{1 + \epsilon s^{1}_{t}} \bigg)= \mathbb{E}(s^{1}_{t}) = 1.
		\end{equation*}
		Then necessary,  we have $\mathbb{E} (s^{1}_{t}) = 1 \quad \forall t \in [0, 1]$. 
		$\square$
	\end{proof}
	\begin{proof} \textit{of theorem \ref{th1}}\\
		Let us first check that $\tilde{ \mathbb{E}} (s_{t}) = 1.$\par
		For that, we define a new probability measure $\tilde{P}_{r}$, by setting 
		\begin{equation*}
			\dfrac{d \tilde{P}_r}{d P_r} \bigg |_{\mathcal{F}^t} = s^{1}_{t}.
		\end{equation*}
		For the filtered probability space $\big (\Omega, \mathcal{F}, \tilde{P}_{r}, \mathcal{F}^{t} \big)$, the process $( \tilde{W}_{t})_{t \in [0, T]}$ and $(Q_{t})_{t \in [0, T]}$ are $\mathcal{F}^{t}-$Wiener process with $\mathbb{C}_{B}$ and $\alpha \mathbb{C}_{B} \alpha^{T} + \Sigma \mathbb{C}_{W} \Sigma^T$ as covariance matrices respectively (lemma \ref{lem1}). \par
		Let us set
		\begin{equation*}
			s_{t} = \dfrac{1}{s^{1}_{t}}.
		\end{equation*}
		Then 
		\begin{equation*}
			\tilde{\mathbb{E}} (s_{t}) = \mathbb{E} \big (s^{1}_{t} s_{t} \big) = 1
		\end{equation*}
		and 
		\begin{equation*}
			\dfrac{ d P_r}{d \tilde{P}_r} \bigg |_{\mathcal{F}^t} = s_{t}.
		\end{equation*}
		Next, let us set
		\begin{equation*}
			\Pi_{t} = P^{1/2}_{t}
		\end{equation*}
		and assume that $P_{t}$ is invertible and that $\Pi_{t}$ is differentiable and $\dot{\Pi}_{t}$ is the solution of the Lyapunov equation
		\begin{equation*}
			\dot{P}_{t} = \Pi_{t} \dot{\Pi}_{t} + \dot{\Pi}_{t}.
		\end{equation*} 
		It follows that 
		\begin{align*}
			&d \bigg [\psi \big (\hat{Z}^{\psi}_{t} + P^{1/2}_{t} x , t   \big)s_{t} \bigg] \\ = & s_{t} \bigg [\dfrac{\partial \psi}{\partial t} + \nabla \psi^{T} \big (D \hat{Z}^{\psi}_{t} + d \big) + \nabla \psi^T \dot{\Pi}_{t} x + \dfrac{1}{2} tr \nabla^{2} \psi \big (PU^T + \mathbb{C}_{B} \alpha^T \big) \big (\alpha \mathbb{C}_{B}\alpha^T  + \Sigma \mathbb{C}_{W} \Sigma^T  \big)^{-1} \\ 
			&\big ( UP + \alpha \mathbb{C}_{B} \big)  \bigg] dt  + s_{t} \bigg [\nabla \psi^{T}\big (PU^T + \mathbb{C}_{B} \alpha^T \big) + \psi \big (\hat{Z}^{\psi^T}_{t} U^T  + u^{1^T} \big) \bigg] \big (\alpha \mathbb{C}_{B} \alpha^T + \Sigma \mathbb{C}_{W} \Sigma^T \big)^{-1} d Q_{t},
		\end{align*}
		where the space argument of $\dfrac{\partial \psi}{\partial t} , \nabla \psi , \nabla^{2} \psi$ on the right hand side is evaluated at $\hat{Z}^{\psi}_{t} + P^{1/2}_{t} x$. \par
		We notice that:
		\begin{equation*}
			\int \nabla \psi^{T} \big (\hat{Z}^{\psi}_{t} + P^{1/2}_{t} x , t \big) \dot{\Pi}_{t} x e^{-(1/2) x^2} dx = \dfrac{1}{2} \int tr \nabla^{2} \psi \big (\hat{Z}^{\psi}_{t} + P^{1/2}_{t} x , t \big) \dot{P}_{t} e^{-(1/2)x^2 } dx.
		\end{equation*}
		Hence
		\begin{eqnarray*}
			d p(t, \psi) & = &s_{t} \int \bigg [\dfrac{\partial \psi}{\partial t} + \nabla \psi^{T} \big (D\hat{Z}^{\psi}_{t} + d  \big) + \dfrac{1}{2} tr \nabla^{2} \psi\\
			&&\bigg( \dot{P}_{t} +  \big(PU^T   + \mathbb{C}_{B} \alpha^2 \big) \big (\alpha \mathbb{C}_{B} \alpha^T + \Sigma \mathbb{C}_{W} \Sigma^T  \big)^{-1} \big (UP + \alpha \mathbb{C}_{B}  \big) \bigg )\bigg] \\
			&&  \dfrac{e^{-(1/2) x^2} }{(2 N)^{N/2}} dx  + s_{t} \bigg [\int \bigg (\nabla \psi^T \big (PU^T  + \mathbb{C}_{B} \alpha^T \big) + \psi \big (\hat{Z}^{\psi}_{t} U^T + u^{1^T} \big)  \bigg) \dfrac{e^{ -(1/2)x^2 }}{(2N)^{N/2}} dx  \bigg] \\
			&&
			\big (\alpha \mathbb{C}_{B} \alpha^T + \Sigma \mathbb{C}_{W} \Sigma^T \big) d Q_{t}.
		\end{eqnarray*}
		From \eqref{13}, we notice that 
		\begin{align}\nonumber\label{15}
			& \dfrac{1}{2} \int tr \nabla^{2} \psi \bigg (\dot{P}_{t} + \big (P U^T + \mathbb{C}_{B} \alpha^T \big) \big (\alpha \mathbb{C}_{B} \alpha^T + \Sigma \mathbb{C}_{W} \Sigma^T \big)^{-1} \big (UP + \alpha \mathbb{C}_{B} \big) \bigg) e^{-(1/2) x^2} dx \\\nonumber
			&  = \dfrac{1}{2} \int tr \nabla^{2} \psi \big (\mathbb{C}_{B} + DP + PD^T \big) e^{-(1/2)x^2 } dx \\
			& = \int \bigg (\dfrac{1}{2} tr \nabla^2 \psi \mathbb{C}_{B} + \nabla \psi^{T}D P^{1/2} x \bigg)e^{-(1/2)x^2} dx,
		\end{align}
		Since 
		\begin{equation}\label{16}
			\int \nabla \psi^{T} \big (PU^{T} + \mathbb{C}_{B} \alpha^T \big) e^{-(1/2)x^2} dx = \int \psi x^T P^{1/2}U^{T} e^{-(1/2)x^2} dx
		\end{equation}
		From \eqref{15} and \eqref{16}, we can evaluate the right hand side of \eqref{14} and obtain 
		\begin{eqnarray*}
			d p (t, \psi)& = &p(t, \psi) \bigg [\dfrac{\partial \psi}{\partial t} + \nabla \psi^{T} \big (D Z_{t} + d \big) + \dfrac{1}{2} tr \nabla^2 \psi \mathbb{C}_{B}\bigg] dt\\
			&&+ p(t, \psi) \psi \big (x^{T}U^{T} + u^{1^T} \big) \big (\alpha \mathbb{C}_{B} \alpha^T + \Sigma \mathbb{C}_{W}\Sigma^T \big)^{-1} d Q_{t}.
		\end{eqnarray*}
		$\square$
	\end{proof}
	Using equation \eqref{12}, equation \eqref{10} becomes
	\begin{equation}\label{17}
		\hat{Z}^{\psi}_{t} = \dfrac{\hat{p}(t, \psi)}{\hat{p}(t, 1)}, \quad t \in [0,T].
	\end{equation} 
	Moreover by equation \eqref{141}, we have the dynamic of the risky asset given by
	\begin{equation}\label{18}
		\left\{
		\begin{array}{ll}
			d Q_{t} & = \bigg (U(t) \hat{Z}^{\psi}_{t} + u^{1}(t) \bigg) dt + d \nu\\
			Q_{0} & = \ln s_{0} \in \mathbb{R}^{k}.
		\end{array}
		\right.
	\end{equation}
	From equation \eqref{17}, we can write equation \eqref{5} as 
	\begin{equation}\label{19}
		\left\{
		\begin{array}{ll}
			d X_{t} & = \bigg [X_{t} r_{t} + \rho^{T}(t) \big (U \hat{Z}^{\psi}_{t} + u - r_{t} \mathbf{1} \big) - C(t) - \beta(t) + R(t) \bigg] dt + \rho^{T}(t) d \nu_{t},  \quad t\in [0, T\wedge \xi]\\
			X(0) & = x.
		\end{array}
		\right.
	\end{equation}
	Let $\overline{\mathcal{L}}_{T}(x, 0)$ the set defined by
	\begin{align*}
		\overline{\mathcal{L}}_{T}(x, 0) & = \bigg \{\big (\rho(t), C(t), \beta(t) \big)_{t \in [0,T]} \bigg | \int_{0}^{T} |\rho(t)|^2 < +\infty,\;  \int_{0}^{T} |C(t)| < +\infty, \; \int_{0}^{T} |\beta(t)| < +\infty,  P_{r}-a.s \\
		& \text{and equation}\; \eqref{19}\; \text{has a unique strong solution such that}\; X(t) + \hat{\omega}_{1}(t) > 0\; P_{r}-a.s.\; t\in [0, T] \bigg \}.
	\end{align*}
	Next, the problem \eqref{6} can be written as:
	\begin{equation*}
		\varphi \big (0, x, z \big) = \sup_{(\rho_t, C_t, \beta_t) \in \overline{\mathcal{L}}_{T}(x,0) } \mathcal{V}\big (0,x, z;\rho_t, C_t, \beta_t \big)
	\end{equation*}
	with
	\begin{align*}
		&\mathcal{V}\big (x, z, \rho_t, C_t, \beta_t \big)\\ & = \mathbb{E} \bigg [\int_{0}^{\xi \wedge T} e^{-\int_{0}^{t}\theta (s)ds } V(C_{t}) dt + e^{-\int_{0}^{\xi} \theta (t) dt} V(\mathbf{v} (\xi)) \mathds{1}_{[\xi \leqslant T]} + e^{-\int_{0}^{T} \theta (t) dt} V(X(T)) \mathds{1}_{[\xi > T]}  \bigg]\\
		& = \mathbb{E} \bigg \{ \mathbb{E} \bigg [ \int_{0}^{\xi \wedge T} e^{-\int_{0}^{t}\theta(s)ds } V(C_{t}) dt \bigg | \mathcal{H}_{t} + e^{-\int_{0}^{\xi} \theta(t) dt} V(\mathbf{v} (\xi)) \mathds{1}_{[\xi \leqslant T]} \bigg | \mathcal{H}_{t} + e^{-\int_{0}^{T}\theta (t) dt } V(X(T)) \mathds{1}_{[\xi > T]} \bigg | \mathcal{H}_{t} \bigg ] \bigg \} \\
		& = \mathbb{E} \bigg [ \int_{0}^{T} e^{-\int_{0}^{t}\theta (s)ds } V(C_t) P_{r} \big (\xi \wedge T > t \big | \mathcal{H}_{t} \big) dt + \mathbb{E} \big [ e^{- \int_{0}^{\xi} \theta (t)dt } V(\mathbf{v} (\xi)) \mathds{1}_{[ \xi \leqslant T]} \big | \mathcal{H}_{t} \big ] \\ 
		& + e^{-\int_{0}^{T} \theta (t) dt } V(X(T)) \mathbb{E} \big [\mathds{1}_{[\xi > T]} \big | \mathcal{H}_{t}  \big] \bigg ]\\
		& = \mathbb{E} \bigg [\int_{0}^{T} e^{-\int_{0}^{t}\theta (s)ds  } V(C_t) \overline{G}_{\xi}(t)dt + \int_{0}^{T} e^{-\int_{0}^{t}\theta (s)ds } V(\mathbf{v} (t)) \mathds{1}_{[t \leqslant T]} g_{\xi}(t)dt + e^{- \int_{0}^{T}\theta(t)dt } V(X(T)) \overline{G}_{\xi} (T)  \bigg] \\
		& = \mathbb{E} \bigg [\int_{0}^{T} e^{-\int_{0}^{t}\theta (s)ds} \bigg (V(C_t) \overline{G}_{\xi}(t) + V(\mathbf{v} (t))g_{\xi}(t) \bigg) dt + e^{-\int_{0}^{T} \theta(t)dt } V(X(T)) \overline{G}_{\xi}(T)  \bigg].
	\end{align*}
	But
	\begin{equation*}
		\overline{G}_{\xi}(t) = e^{- \int_{0}^{t} \mu(s)ds } \quad \text{and}\quad g_{\xi}(t) = \mu (t) e^{-\int_{0}^{t}\mu (s)ds }.
	\end{equation*}
	Thus
	\begin{equation*}
		\mathcal{V}\big (x, z, \rho_t, C_t, \beta_t \big) = \mathbb{E} \bigg [\int_{0}^{T} e^{- \int_{0}^{t} (\theta(s) + \mu(s))ds } \big (V(C_t)  + \mu (t) V(\mathbf{v} (t))\big) dt + e^{- \int_{0}^{T} (\theta(t) + \mu(t))dt} V(X(T)) \bigg].
	\end{equation*}
	The value function is an $\mathcal{F}^{t}-$measurable random variable, since all model parameters are random. So the value function can not be determined from the partial differential equation as usual. This leads us to  theorem \ref{th2} in order to allow to solve the control problem through the combination of a HJB equation with BSDE associated to \eqref{6}. \par
	\begin{theorem}\label{th2}
		Let $\overline{\mathcal{O}}$ be the closure of the solvency region $\mathcal{O}$. Suppose that there exists a function $F \in C^{1,2,2}(\mathcal{O})$ and an admissible control $ \overline{\mathcal{L}}_{T}(x,0)$ such that
		\begin{enumerate}
			\item[\textbf{(a)}]
			$\mathcal{D}^{\rho, C, \beta} \big [ F \big (t, x, \omega_{1}(t), \omega_{2}(t) \big) \big ] + V(C) + \mu (t) V \bigg (x + \dfrac{\beta}{a(t)} \bigg) \leqslant 0, \quad P_{r} - a.s., \; \forall (\rho, C, \beta) \in  \overline{\mathcal{L}}_{T}(x,0)$ and $(t, x) \in \mathbb{R}^{m} \times [0, T \wedge \xi]$, where $\mathcal{D}^{\rho, C, \beta} $ is a partial differential generator.
			\item[\textbf{(b)}]
			$\mathcal{D}^{\hat{\rho}, \hat{C}, \hat{\beta}} \big [ F \big (t, x, \omega_{1}(t), \omega_{2}(t) \big) \big ] + V(\hat{C}) + \mu (t) V \bigg (x + \dfrac{\hat{\beta}}{a(t)} \bigg) = 0, \quad P_{r} - a.s., \; (t, x) \in \mathbb{R}^{m} \times [0, T \wedge \xi]$
			\item[\textbf{(c)}]
			$ \forall (\rho, C, \beta) \in  \overline{\mathcal{L}}_{T}(x,0)$,
			\begin{equation*}
				\lim_{t \to T^{-}} F \big (t, x, \omega_{1}(t), \omega_{2}(t) \big) = V(x), \quad P_{r} - a.s.
			\end{equation*}
			\item[\textbf{(d)}]
			Let $\Upsilon$ be the set of stopping times $\zeta \leqslant T$.  Then
			\begin{equation}\label{191}
				F \big (t, x, \omega_{1}(t), \omega_{2}(t) \big)= \varphi (t, x,z) = \sup_{(\rho, C, \beta) \in \mathcal{L}_{T}(x,t)} \mathcal{V}\big (t,x,z;\rho_t, C_t, \beta_t \big)= \mathcal{V}\big (t,x,z; \hat{\rho}_t,\hat{ C}_t, \hat{\beta}_t \big),
			\end{equation}
			where
			\begin{eqnarray*}
				&&\mathcal{V}\big (t,x,z;\rho_t, C_t, \beta_t \big)\\
				&&= \mathbb{E}_{t, x} \bigg [\int_{t}^{T} e^{-\int_{0}^{s}(\theta(u) + \mu(u))du } \big (V(C_s) + \mu(s) V(\mathbf{v} (s)) \big)ds + e^{-\int_{t}^{T}(\theta(s) + \mu(s) )ds} V(X(T)) \bigg]
			\end{eqnarray*}
			with $\mathbb{E}_{t,x}(\cdot) = \mathbb{E} \big (\cdot \big | X_{t} = x, \mathcal{H}_{t} \big)$ and $\mathcal{L}_{T}(x, t)$ is the restriction of $\mathcal{L}_{T}(x,0)$ on $[t, T]$. 
		\end{enumerate}
	\end{theorem}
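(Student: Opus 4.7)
The plan is to prove the verification theorem by the standard martingale/It\^o-formula argument, using the HJB inequality (a) to bound the value functional above by $F$ for every admissible strategy and the HJB equality (b) to achieve that upper bound along $(\hat{\rho},\hat{C},\hat{\beta})$. First, I would fix an arbitrary admissible triple $(\rho,C,\beta)\in\overline{\mathcal{L}}_T(x,t)$ and apply It\^o's formula to the discounted process $e^{-\int_t^s(\theta(u)+\mu(u))du}\,F(s,X_s,\omega_1(s),\omega_2(s))$ on $[t,T]$. The drift of the resulting semimartingale is precisely $e^{-\int_t^s(\theta+\mu)du}\,\mathcal{D}^{\rho,C,\beta}[F]$, while the martingale part is a stochastic integral against $dW_s$, $dB_s$, and the BSDE noise driving $\omega_1,\omega_2$.

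Next, I would add and subtract the running reward $V(C_s)+\mu(s)V(X_s+\beta_s/a(s))$ inside the drift, invoke condition (a), and integrate from $t$ to a localizing time $\zeta_n\uparrow T$ chosen so that the stochastic integrals are true martingales. Taking $\mathbb{E}_{t,x}[\cdot]$ kills those martingale pieces and gives
\begin{equation*}
F(t,x,\omega_1(t),\omega_2(t))\;\geq\;\mathbb{E}_{t,x}\!\left[\int_t^{\zeta_n} e^{-\int_t^{s}(\theta+\mu)du}\bigl(V(C_s)+\mu(s)V(\mathbf{v}(s))\bigr)ds+e^{-\int_t^{\zeta_n}(\theta+\mu)du}F(\zeta_n,X_{\zeta_n},\omega_1,\omega_2)\right].
\end{equation*}
Sending $n\to\infty$ and using terminal condition (c) together with admissibility then yields $F(t,x,\omega_1(t),\omega_2(t))\geq\mathcal{V}(t,x,z;\rho,C,\beta)$, and taking the supremum over admissible controls produces $F\geq\varphi$.

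The reverse inequality follows by repeating the same It\^o-expectation computation along the candidate $(\hat{\rho},\hat{C},\hat{\beta})$. Condition (b) turns the pointwise drift inequality into an equality, so the same chain of manipulations gives $F(t,x,\omega_1(t),\omega_2(t))=\mathcal{V}(t,x,z;\hat{\rho},\hat{C},\hat{\beta})\leq\varphi(t,x,z)$. Combining with $F\geq\varphi$ gives the equality chain in \eqref{191} and certifies optimality of $(\hat{\rho},\hat{C},\hat{\beta})$.

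The main obstacle I expect is the localization/integrability step. Because $r$, $\eta$, $\Sigma$, $R$, $\theta$, and the BSDE drivers of $\omega_1,\omega_2$ are random and only progressively measurable, the stochastic integrals arising from It\^o's formula are a priori only local martingales, so one needs a careful choice of $\zeta_n$ together with moment bounds on $(X_s,\omega_1(s),\omega_2(s))$ inherited from the admissibility definition ($L^2$ integrability of $\rho$, $L^1$ of $C$ and $\beta$, and the strict positivity $X_s+\hat{\omega}_1(s)>0$) in order to justify passage to the limit via dominated or monotone convergence. The power-utility singularity at zero when $\delta<0$ and the polynomial growth at infinity when $\delta\in(0,1)$ both make this uniform integrability step the delicate part of the argument; the rest is bookkeeping on the It\^o expansion.
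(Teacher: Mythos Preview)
Your proposal is correct and follows the standard verification-theorem route. Note, however, that the paper does not actually give its own proof of this statement: it simply writes ``See \cite{shen2016optimal}.'' The detailed It\^o--localization--limit argument you outline is precisely the template of the Shen--Wei reference, and it is also what the authors carry out explicitly later in the paper for the concrete candidate $\hat{F}(t,x,\hat\omega_1,\hat\omega_2)=\tfrac{1}{\delta}(x+\hat\omega_1)^\delta(\hat\omega_2)^{1-\delta}$ in the two verification theorems for $\delta\in(0,1)$ and $\delta\in(-\infty,0)$, where the uniform-integrability issue you flag is handled case by case via explicit exponential-moment bounds on $(\hat X+\hat\omega_1)^\zeta(\hat\omega_2)^{(1-\delta)(1+\kappa)}$.
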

	\begin{proof}
		See \cite{shen2016optimal}.
	\end{proof}
\begin{proposition}\label{prop1}
Let $\overline{\mathcal{O}}$ be the closure of the solvency region $\mathcal{O}$. Suppose that there exists a function $F \in C^{1,2,2}(\mathcal{O})$ and an admissible control $ \overline{\mathcal{L}}_{T}(x,0)$ such that $F$ is the solution of \eqref{20}-\eqref{21}:
	\begin{equation}\label{20}
		\left\{
		\begin{array}{ll}
			& -\dfrac{\partial F}{\partial t} + \sup_{(\rho, C, \beta) \in \mathbb{R}^{k} \times \mathbb{R}^{+} \times \mathbb{R}} \bigg \{\mathcal{D}^{\rho, C, \beta} F(t, x, z,\omega_{1}(t), \omega_{2}(t)) + V(C) + \mu (t) V\big (x + \dfrac{\beta}{a_t} \big) \bigg \}  = 0\\
			& F(T, x, 0, 1)  = V(x) 
		\end{array}
		\right.
	\end{equation}
	and 
	\begin{equation}\label{21}
		\left\{
		\begin{array}{ll}
			& \omega_{1}(t)  = \int_{t}^{T} f_{1}\big (s, \hat{Z}^{\psi}_{s}, \omega_{1}(s), \lambda_{1}(s) \big) ds - \int_{t}^{T}\lambda^{T}_{1}(s)d\nu_{s}\\
			& \omega_{2}(t)  = 1 + \int_{t}^{T} f_{2}\big (s, \hat{Z}^{\psi}_{s}, \omega_{2}(s), \lambda_{2}(s) \big) ds - \int_{t}^{T}\lambda^{T}_{2}(s)d\nu_{s}\\
			&  \omega_{1}(T)  = 0\\
			&  \omega_{2}(T)  = 1,
		\end{array}
		\right.
	\end{equation}
	where:
	\begin{itemize}
		\item[$\bullet$]
		$f_{1}, f_{2}$ are $\mathcal{F}^{t}-$measurable functions;
		\item[$\bullet$]
		$\mathcal{D}^{\rho, C, \beta}$ is a partial differential generator acting on a function $F$ as:
		\begin{align*}
			&\mathcal{D}^{\rho, C, \beta} F\\ & = \dfrac{\partial F}{\partial x} d X_{t} + \dfrac{\partial F}{\partial \omega_1} d \omega_{1}(t) + \dfrac{\partial F}{\partial \omega_2} d \omega_{2}(t) + \dfrac{1}{2} \dfrac{\partial^2 F}{\partial x^2} (d X_t)^2  + \dfrac{\partial^2 F}{\partial x \partial \omega_1} d X_{t} d \omega_{1}(t)  \\
			& + \dfrac{\partial^2 F}{\partial x \partial \omega_2} d X_{t} d \omega_{2}(t)+ \dfrac{1}{2} \dfrac{\partial^2 F}{\partial \omega^{2}_{1}}(d \omega_{1}(t))^{2} + \dfrac{1}{2}\dfrac{\partial^2 F}{\partial \omega^{2}_{2}}(d \omega_{2}(t))^{2} + \dfrac{\partial^2 F}{(d \omega_1) (d \omega_2)} (d \omega_1 (t)) (d \omega_2 (t)).
		\end{align*}
	\end{itemize}
 If we assume that $F$ is of the form
	\begin{eqnarray*}
		F\big (t, x, \omega_{1}(t), \omega_{2}(t) \big) = \dfrac{1}{\delta} \big (x + \omega_{1}(t) \big)^{\delta} \big (\omega_{2}(t) \big)^{1-\delta}, \delta \in (-\infty , 0) \cup (0,1),
	\end{eqnarray*}
	Then
		\begin{equation*}
		f_{1} \big (t, z, \omega_{1}(t), \lambda_{1}(t) \big) = - \big (r_t + a(t) \big) \omega_{1}(t) + R(t) - \lambda^{T}_{1} \big (Uz + u - r_t \mathbf{1} \big)
	\end{equation*}
	and 
	\begin{equation*} 
		f_{2} \big (t, z, \omega_{2}(t), \lambda_{2}(t) \big) =  \bigg (\dfrac{\delta}{1-\delta} \bigg) \lambda^T_2 \big (Uz + u - r_t \mathbf{1} \big) + H(t) + K(t,z) \omega_{2}(t),
	\end{equation*}
	where 
\begin{align*}
		H(t) & = 1 + \dfrac{(a(t))^{-\frac{\delta}{1-\delta}} }{(\mu (t))^{-\frac{1}{1-\delta}} }, \\
		K(t,z) & =  \dfrac{\delta}{2 \big (1 - \delta \big)^2}  \big (Uz + u - r_{t} \mathbf{1} \big)^{T} \big (\alpha \mathbb{C}_{B} \alpha^{T} +  \Sigma \mathbb{C}_{W} \Sigma^{T} \big)^{-1}  \big ( Uz +  u - r_{t} \mathbf{1} \big) +  \dfrac{\big (\theta(t) + \mu(t)\big)}{1-\delta} \\
		& + \bigg (\dfrac{\delta}{1-\delta} \bigg) \bigg (r_t + a(t) \bigg).
	\end{align*}
\end{proposition}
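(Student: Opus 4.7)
The plan is to substitute the ansatz into the HJB equation \eqref{20}, carry out the pointwise supremum over $(\rho, C, \beta)$, and then match coefficients of distinct monomials in $y := x + \omega_1$ and $w := \omega_2$ in the resulting identity. Since the ansatz has no explicit $t$-dependence, $\partial F/\partial t = 0$, so \eqref{20} collapses to $\sup_{(\rho,C,\beta)}\{\mathcal{D}^{\rho,C,\beta}F + V(C) + \mu(t) V(x + \beta/a(t))\} = 0$. Differentiating the ansatz yields $F_x = F_{\omega_1} = y^{\delta-1}w^{1-\delta}$, $F_{\omega_2} = \frac{1-\delta}{\delta}y^{\delta}w^{-\delta}$, and second derivatives that fall into exactly three scaling patterns: $y^{\delta-2}w^{1-\delta}$ (for $F_{xx}, F_{x\omega_1}, F_{\omega_1\omega_1}$), $y^{\delta-1}w^{-\delta}$ (for $F_{x\omega_2}, F_{\omega_1\omega_2}$), and $y^{\delta}w^{-\delta-1}$ (for $F_{\omega_2\omega_2}$). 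Using \eqref{19} for $dX_t$ and \eqref{21} for $d\omega_i$, all covariations are controlled by the common matrix $M := \alpha\mathbb{C}_B\alpha^{T} + \Sigma\mathbb{C}_W\Sigma^{T}$, which is the covariance of the innovation $d\nu_t$ by Lemma \ref{lem1}.

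I would then execute the three inner optimizations. Writing $b := Uz + u - r_t\mathbf{1}$, the first-order conditions give $\hat C = y/w$, $\hat{\mathbf v} := x + \hat\beta/a(t) = (a(t)/\mu(t))^{1/(\delta-1)}(y/w)$, and $\hat\rho = \tfrac{y}{1-\delta}M^{-1}b - \lambda_1 + (y/w)\lambda_2$. Substituting $\hat C$ and $\hat{\mathbf v}$ back yields the $C$-supremum value $\tfrac{1-\delta}{\delta}y^{\delta}w^{-\delta}$ and the $\beta$-supremum value $\tfrac{1-\delta}{\delta}(H(t)-1)y^{\delta}w^{-\delta} + a(t)xF_x$, the latter splitting as $a(y-\omega_1)y^{\delta-1}w^{1-\delta}$. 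The $\rho$-supremum value equals $-\tfrac{1}{2}F_{xx}\hat\rho^{T} M\hat\rho$, which expands into six terms involving the scalar quantities $b^{T}M^{-1}b$, $\lambda_1^{T}M\lambda_1$, $\lambda_2^{T}M\lambda_2$, $b^{T}\lambda_1$, $b^{T}\lambda_2$, and $\lambda_1^{T}M\lambda_2$, each multiplied by an explicit power of $y$ and $w$.

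The crucial observation is a three-fold cancellation: the three pure-quadratic-in-$\lambda$ terms produced by $-\tfrac{1}{2}F_{xx}\hat\rho^{T}M\hat\rho$ coincide, up to sign, with the three second-order covariation terms $\tfrac{1}{2}F_{\omega_1\omega_1}\lambda_1^{T}M\lambda_1 + \tfrac{1}{2}F_{\omega_2\omega_2}\lambda_2^{T}M\lambda_2 + F_{\omega_1\omega_2}\lambda_1^{T}M\lambda_2$ already present in $\mathcal{D}^{\rho,C,\beta}F$, and therefore vanish after summation. Consequently every remaining drift contribution is proportional to one of the three monomials $y^{\delta-1}w^{1-\delta}$, $y^{\delta}w^{-\delta}$, or $y^{\delta}w^{1-\delta}$, and the HJB decouples into three scalar identities. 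Collecting terms, the coefficient of $y^{\delta-1}w^{1-\delta}$ (containing $R - (r_t + a)\omega_1 - b^{T}\lambda_1 - f_1$) forces the stated formula for $f_1$; the coefficient of $y^{\delta}w^{-\delta}$ determines the $\lambda_2$-linear part of $f_2$ and the additive constant $H(t)$; and the coefficient of $y^{\delta}w^{1-\delta}$ — into which the $K(t,z)\omega_2$ summand of $f_2$ feeds via $F_{\omega_2}(-K\omega_2) = -\tfrac{1-\delta}{\delta}K\,y^{\delta}w^{1-\delta}$ — determines $K(t,z)$.

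The principal difficulty is the combinatorial bookkeeping: keeping track of which derivative, which drift, and which covariation contributes to which of the three scaling classes, and verifying the clean cancellation of the quadratic-in-$\lambda$ block — without which the decoupling into three scalar identities would not occur. Once that separation is performed, the three coefficient equations are solved algebraically for $f_1$ and for the two pieces of $f_2$, reproducing the expressions claimed in the proposition.
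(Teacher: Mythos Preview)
Your approach is correct and is, in fact, cleaner than the paper's own derivation. Both proceed by optimizing in $(\rho,C,\beta)$, substituting the ansatz, and matching coefficients, but the executions diverge sharply. You compute the genuine partial derivatives of $F(t,x,\omega_1,\omega_2)$ in the independent variables $(x,\omega_1,\omega_2)$ (so indeed $F_t=0$, $F_{\omega_1}=F_x$, $F_{x\omega_1}=F_{xx}$, etc.), and your observation that the three pure quadratic-in-$\lambda$ terms in $-\tfrac12 F_{xx}\hat\rho^{T}M\hat\rho$ exactly cancel $\tfrac12 F_{\omega_1\omega_1}\lambda_1^{T}M\lambda_1+\tfrac12 F_{\omega_2\omega_2}\lambda_2^{T}M\lambda_2+F_{\omega_1\omega_2}\lambda_1^{T}M\lambda_2$ is the crux that makes the identity split into three decoupled scalar equations in the monomials $y^{\delta-1}w^{1-\delta}$, $y^{\delta}w^{-\delta}$, $y^{\delta}w^{1-\delta}$. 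The paper instead computes $F_t,F_{\omega_1},F_{\omega_2},F_{x\omega_1},\ldots$ via a chain-rule device through $\omega_i'(t)=-f_i$, which injects stray $f_2/f_1$ ratios into every derivative; after several pages of algebra it reaches the \emph{coupled} equations~(39)--(40) and then passes to the claimed $f_1,f_2$ only through an unjustified ``$\approx$'' step. Your route avoids this detour entirely and yields the exact decoupling.

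One point to make explicit in your write-up: the HJB~\eqref{20} as stated omits the discount contribution, yet the $\tfrac{\theta(t)+\mu(t)}{1-\delta}$ piece of $K(t,z)$ can only arise from a $(\theta+\mu)F$ term in the equation (the paper silently inserts it at the start of its Appendix computation). When you collect the coefficient of $y^{\delta}w^{1-\delta}$ you will need $\tfrac{\theta+\mu}{\delta}y^{\delta}w^{1-\delta}$ alongside $\tfrac{1}{2(1-\delta)}b^{T}M^{-1}b$, $(r_t+a)$, and $-\tfrac{1-\delta}{\delta}K$ to recover the stated $K$. Also remember to split $xr_t$ and $a(t)x$ (from the $\beta$-block) as $(r_t+a)y-(r_t+a)\omega_1$ so that the $(r_t+a)\omega_1$ piece lands in the $y^{\delta-1}w^{1-\delta}$ equation for $f_1$ while the $(r_t+a)y$ piece feeds $K$.
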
	
\begin{proof}
See Appendix in section \ref{A}.\\
$\square$
\end{proof}
	\begin{lemma}\label{lm3}
		Similarly to \cite{hata2020optimal}, let us assume \textbf{H1)-H2)} and $f_{1}$ defined as in equation \eqref{24}. Then the BSDE
		\begin{equation}\label{26}
			\left\{
			\begin{array}{ll}
				\omega_{1}(t) & = \int_{t}^{T}f_{1}\big (s, \hat{Z}^{\psi}_{s}, \omega_{1}(s), \lambda_{1}(s) \big)\mathrm{d}s - \int_{t}^{T}\lambda^{T}_{1}(s)\mathrm{d}\nu_{s} \\
				\omega_{1}(T) & = 0
			\end{array}
			\right.
		\end{equation} 
		has a unique solution:
		\begin{equation}\label{27}
	\left\{
			\begin{array}{ll}
				\hat{\omega}_{1}(t) & =  \int_{t}^{T} e^{-\int_{t}^{s} (r_u + a(u))du } R(s) \mathrm{d}s \\
			\hat{\lambda}_{1}(t) & = 0.
			\end{array}
			\right.		
		\end{equation}
	\end{lemma}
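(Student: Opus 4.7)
The plan is to prove existence by direct verification of the proposed pair, and then prove uniqueness by invoking the standard theory for linear BSDEs with Lipschitz drivers.

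First I would check that $(\hat{\omega}_1, \hat{\lambda}_1) = \bigl(\int_t^T e^{-\int_t^s (r_u+a(u))\,du} R(s)\,ds,\; 0\bigr)$ solves \eqref{26}. Since $\hat{\lambda}_1 \equiv 0$, the stochastic integral $\int_t^T \hat{\lambda}_1^T(s)\,d\nu_s$ vanishes identically, and the driver collapses to $f_1(s, \hat{Z}^{\psi}_s, \hat{\omega}_1(s), 0) = -(r_s+a(s))\hat{\omega}_1(s) + R(s)$. It then suffices to verify the integral identity $\hat{\omega}_1(t) = \int_t^T \bigl[-(r_s+a(s))\hat{\omega}_1(s) + R(s)\bigr]\,ds$. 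Rewriting the candidate as $\hat{\omega}_1(t) = e^{\int_0^t (r_u+a(u))\,du}\int_t^T e^{-\int_0^s (r_u+a(u))\,du} R(s)\,ds$ and differentiating in $t$, one obtains the ODE $\dot{\hat{\omega}}_1(t) = (r_t+a(t))\hat{\omega}_1(t) - R(t)$ with terminal condition $\hat{\omega}_1(T) = 0$, which is precisely the differential form of the required identity. Hence the candidate is a solution.

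For uniqueness, I would observe that $f_1$ is affine in $(\omega_1, \lambda_1)$, with an $\mathcal{F}^t$-adapted coefficient $-(r_t+a(t))$ in front of $\omega_1$ (bounded by \textbf{H3)} and the continuity of $a$ on $[0,T]$) and an $\mathcal{F}^t$-adapted coefficient $-(U\hat{Z}^{\psi}_t + u - r_t\mathbf{1})$ in front of $\lambda_1$. Under \textbf{H1)}--\textbf{H3)}, the Kalman filter $\hat{Z}^{\psi}_t$ from \eqref{14} inherits, via the Riccati bound on $P_t$, uniform $L^p$ moments for every $p<\infty$, so that the $\lambda_1$-coefficient lies in $H^2$ (indeed in a BMO class on $[0,T]$). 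The classical linear-BSDE result (e.g.\ Pardoux--Peng or El Karoui--Peng--Quenez), applied to the driver $f_1$ on the filtered space $(\Omega, \mathfrak{F}, P_r, \mathcal{F}^t)$ with innovation martingale $\nu$ whose predictable covariation is $\alpha\mathbb{C}_B\alpha^T + \Sigma\mathbb{C}_W\Sigma^T$, then yields a unique solution $(\omega_1, \lambda_1)\in S^2\times H^2$. Combined with the previous paragraph, this forces $(\omega_1, \lambda_1) = (\hat{\omega}_1, 0)$.

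An equivalent and perhaps cleaner route, which also isolates where the work lies, is to use Girsanov to remove the $\lambda_1$-term: define $\widetilde{P}$ by $d\widetilde{P}/dP_r|_{\mathcal{F}^t} = \mathcal{E}\bigl(\int_0^{\cdot}(U\hat{Z}^{\psi}_s+u-r_s\mathbf{1})^T(\alpha\mathbb{C}_B\alpha^T+\Sigma\mathbb{C}_W\Sigma^T)^{-1}\,d\nu_s\bigr)_t$, under which $\nu$ acquires a drift that exactly cancels the $\lambda_1$-dependence of $f_1$. Taking conditional $\widetilde{\mathbb{E}}[\,\cdot\mid\mathcal{H}_t]$ of the remaining linear ODE-in-expectation produces the explicit formula for $\hat{\omega}_1(t)$ and identifies $\hat{\lambda}_1\equiv 0$ by the martingale representation. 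The main obstacle on this route, and in fact the only nontrivial analytic point, is to verify the Novikov/BMO condition for the Girsanov density, which reduces to an exponential-moment bound on $\hat{Z}^{\psi}_t$; this is where the integrability supplied by \textbf{H1)}--\textbf{H3)} and the Riccati equation \eqref{13} is actually used.
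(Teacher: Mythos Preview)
Your proposal is correct, and the ``equivalent and perhaps cleaner route'' via Girsanov that you sketch at the end is exactly the argument the paper gives: one defines $P_r^1$ by the stochastic exponential you wrote, so that $\nu_t^1 := \nu_t + \int_0^t (U\hat{Z}^{\psi}_s + u - r_s\mathbf{1})\,ds$ becomes a Brownian motion under $P_r^1$; the $\lambda_1$-term in the driver is then absorbed into the new stochastic integral, the BSDE reduces to
\[
\omega_1(t) = \int_t^T \bigl[-(r_s+a(s))\omega_1(s)+R(s)\bigr]\,ds - \int_t^T \lambda_1^T(s)\,d\nu_s^1,
\]
and the paper invokes Proposition~4.1.1 of \cite{zhang2017backward} for existence and uniqueness of \eqref{27}.

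Your primary route---direct verification of the candidate for existence, followed by standard linear-BSDE uniqueness---is a legitimate and more hands-on alternative for the existence half. For the uniqueness half, however, it runs into the same obstruction you already flag for the Girsanov density: the $\lambda_1$-coefficient $U\hat{Z}^{\psi}_t + u - r_t\mathbf{1}$ is unbounded (Gaussian in the linear case), so the off-the-shelf Pardoux--Peng Lipschitz result does not apply directly, and one needs the BMO/exponential-moment control on $\hat{Z}^{\psi}$ either way. The paper itself does not spell out the Novikov verification for $P_r^1$, so your identification of this as the only genuinely nontrivial analytic point is accurate.
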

	\begin{proof}
		Let us define the probability measure $P^{1}_{r}$ by
		\begin{align*}
			\dfrac{d P^{1}_{r}}{d P_r} \bigg |_{\mathcal{F}^{t}} & : = \exp \bigg [\int_{0}^{t} \big (U \hat{Z}^{\psi}_{s} + u - r_s \mathbf{1} \big)^{T}\big (\alpha \mathbb{C}_{B}\alpha^T + \Sigma \mathbb{C}_{W}\Sigma^T \big)^{-1} \mathrm{d}\nu_{s} \\
			& - \dfrac{1}{2} \int_{0}^{t} \big (U \hat{Z}^{\psi}_{s} + u - r_s \mathbf{1} \big)^{T}\big (\alpha \mathbb{C}_{B}\alpha^T + \Sigma \mathbb{C}_{W}\Sigma^T \big)^{-1} \big (U \hat{Z}^{\psi}_{s} + u - r_s \mathbf{1} \big)\mathrm{d}s \bigg].
		\end{align*}
		Thus under $P^{1}_{r}, \nu^{1}_{t}$ defined by
		\begin{equation*}
			\nu^{1}_{t} : = \nu_{t} + \int_{0}^{t} \big (U \hat{Z}^{\psi}_{s} + u - r_s \mathbf{1} \big) \mathrm{d}s
		\end{equation*}
		is a $\mathcal{F}^{t}-$Weiner process with covariance matrix $\big (\alpha \mathbb{C}_{B}\alpha^T + \Sigma \mathbb{C}_{W}\Sigma^T \big)$. Then equation \eqref{26} can be written as:
		\begin{align}\nonumber\label{28}
			\hat{\omega}_{1}(t) & =  \int_{t}^{T}\bigg [-\omega_{1}(s) \big (r_s + a(s) \big) + R(s) - \lambda^{T}_{1}(s) \big (U \hat{Z}^{\psi}_{s} + u - r_{s} \mathbf{1}\big) \bigg] \mathrm{d}s \\\nonumber
			& - \int_{t}^{T}\lambda^{T}_{1}(s) \bigg [d \nu^{1}_{s} - \big (U \hat{Z}^{\psi}_{s}  + u - r_{s} \mathbf{1}\big) \mathrm{d}s \bigg] \\
			& = \int^{T}_{t}  \bigg [-\omega_{1}(s) \big (r_s + a(s) \big) + R(s)\bigg] ds - \int_{t}^{T}\lambda^{T}_{1}(s) \mathrm{d} \nu^{1}_{s}.
		\end{align}
		Hence from proposition 4.1.1 of \cite{zhang2017backward}, we conclude that the BSDE \eqref{28} has a unique solution \eqref{27}.\par
		$\hat{\omega}_{1}(t)$ can be interpreted as an actuarial value process of future income and $\lambda_{1}(t)$ its volatility process.
		$\square$
	\end{proof}
	\begin{proposition}
		Consider the assumptions \textbf{H1),  H2)} and the linear case as specified in the previous theorem. Then 
		\begin{equation*}
			P = \mathbb{E}\bigg [\big (Z_{t} - \hat{Z}^{\psi}_{t} \big)\big (Z_{t} - \hat{Z}^{\psi}_{t} \big)^{T} \bigg | \mathcal{F}^{t} \bigg]
		\end{equation*}
		solves the following Riccati equation
		\begin{equation*}
			\left\{
			\begin{array}{ll}
				& \dot{P}(t) + P(t) S_{2}(t)P(t) + S^{T}_{1}(t) P(t) + P(t) S_{1}(t) + S_{0}(t) = 0 \\
				& P(s) = 0, \quad s \in [t, T],
			\end{array}
			\right.
		\end{equation*}
		where:
		\begin{align*}
			S_{2}(t) & = - U^{T} \big (\alpha \mathbb{C}_{B}\alpha^T + \Sigma \mathbb{C}_{W}\Sigma^T \big)^{-1} U\\
			S_{1}(t) & = - U^{T} \big (\alpha \mathbb{C}_{B}\alpha^T + \Sigma \mathbb{C}_{W}\Sigma^T \big)^{-1}\big (\alpha \mathbb{C}_{B} - \alpha \mathbb{C}_{B}\gamma^T \big) - D^T\\
			S_{0}(t) & = - \mathbb{C}_{B} \alpha^T \big (\alpha \mathbb{C}_{B}\alpha^T + \Sigma \mathbb{C}_{W}\Sigma^T \big)^{-1}\big (\alpha \mathbb{C}_{B} \gamma^T - \alpha \mathbb{C}_{B} \big) + \gamma \mathbb{C}_{B}\alpha^{T} \big (\alpha \mathbb{C}_{B}\alpha^T + \Sigma \mathbb{C}_{W}\Sigma^T \big)^{-1}\alpha \mathbb{C}_{B}\\
			& - \gamma \mathbb{C}_{B} \gamma^T.
		\end{align*}
	\end{proposition}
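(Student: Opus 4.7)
The plan is to derive the Riccati equation for $P(t) = \mathbb{E}\big[(Z_t - \hat{Z}^{\psi}_t)(Z_t - \hat{Z}^{\psi}_t)^T \mid \mathcal{F}^t\big]$ by applying It\^o calculus to the filtering error process. Write $e_t := Z_t - \hat{Z}^{\psi}_t$, set $M := \alpha\mathbb{C}_B\alpha^T + \Sigma\mathbb{C}_W\Sigma^T$ and introduce the Kalman gain $K := (PU^T + \mathbb{C}_B\alpha^T)M^{-1}$ from \eqref{14}. Substituting the linear expression for $dQ_t$ into the innovation definition \eqref{141} gives the key identity $d\nu_t = U e_t\,dt + \alpha\,dB_t + \Sigma\,dW_t$. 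Subtracting the filter SDE \eqref{14} from the state SDE \eqref{1} then yields
\begin{equation*}
de_t = (D - KU)\,e_t\,dt + (\gamma - K\alpha)\,dB_t - K\Sigma\,dW_t.
\end{equation*}

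Next I would apply It\^o's formula to the symmetric matrix-valued process $e_t e_t^T$. Using the independence of $B$ and $W$ together with their covariance matrices, the bracket process contributes $(\gamma - K\alpha)\mathbb{C}_B(\gamma - K\alpha)^T + K\Sigma\mathbb{C}_W\Sigma^T K^T$ to the drift. Taking the $\mathcal{F}^t$-conditional expectation annihilates the stochastic integrals, and because $Z_s$ is driven only by $B$ the boundary condition $P(s)=0$ at $s=t$ records that $Z_t$ is $\mathcal{F}^t$-measurable so the conditional error vanishes at time $t$. The resulting closed matrix ODE is
\begin{equation*}
\dot P = (D - KU)P + P(D - KU)^T + (\gamma - K\alpha)\mathbb{C}_B(\gamma - K\alpha)^T + K\Sigma\mathbb{C}_W\Sigma^T K^T.
\end{equation*}

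The third and most delicate step is the algebraic reduction to the factored form announced in the proposition. I would substitute $K = (PU^T + \mathbb{C}_B\alpha^T)M^{-1}$ and exploit the identity $K M K^T = PU^T K^T + \mathbb{C}_B\alpha^T K^T$, which cancels the $PU^T K^T$ contribution from $P(D-KU)^T$ against part of the diffusion bracket. Collecting what remains by degree in $P$, the pure quadratic piece is $-PU^T M^{-1}U P$, which accounts for $PS_2 P$; the $P$-linear contributions combine, after absorbing $D$ and $D^T$, into the symmetric expression $S_1^T P + P S_1$; and the $P$-independent residue yields $S_0$. The main obstacle will be careful bookkeeping of roughly half a dozen mixed cross-terms of the type $\gamma\mathbb{C}_B\alpha^T M^{-1}UP$, $\mathbb{C}_B\alpha^T M^{-1}\alpha\mathbb{C}_B\gamma^T$ and $\mathbb{C}_B\alpha^T M^{-1}\alpha\mathbb{C}_B$, which must be allocated consistently between the $S_1$ and $S_0$ coefficients; the relative signs coming from $(\gamma - K\alpha)\mathbb{C}_B(\gamma - K\alpha)^T$ need to be tracked with particular care so as to reproduce exactly the coefficients stated in $S_0$, $S_1$ and $S_2$.
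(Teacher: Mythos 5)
Your first two steps are the textbook Kalman--Bucy route, but they do not lead to the Riccati equation stated in the proposition, so the algebraic reduction promised in your third paragraph cannot close. The quickest diagnostic is the quadratic term. From your error dynamics $de_t=(D-KU)e_t\,dt+(\gamma-K\alpha)\,dB_t-K\Sigma\,dW_t$ one gets
\begin{equation*}
\dot P=(D-KU)P+P(D-KU)^T+(\gamma-K\alpha)\mathbb{C}_B(\gamma-K\alpha)^T+K\Sigma\mathbb{C}_W\Sigma^TK^T ,
\end{equation*}
and using your own identity $KMK^T=PU^TK^T+\mathbb{C}_B\alpha^TK^T$ the terms $-KUP-PU^TK^T+KMK^T$ collapse to $-PU^TM^{-1}UP+\mathbb{C}_B\alpha^TM^{-1}\alpha\mathbb{C}_B$; hence $\dot P$ carries the quadratic piece $-PU^TM^{-1}UP$. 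Written in the form $\dot P+PS_2P+\cdots=0$ this forces $S_2=+U^TM^{-1}U$, whereas the proposition asserts $S_2=-U^TM^{-1}U$; equivalently, the stated equation requires $\dot P$ to contain $+PU^TM^{-1}UP$. Your claim that the quadratic residue ``accounts for $PS_2P$'' therefore fails on sign, and no bookkeeping of the mixed $\gamma\mathbb{C}_B\alpha^T$ cross-terms can repair a discrepancy of $-(KUP+PU^TK^T)$ between your drift and the one the proposition encodes.

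The source of the mismatch is the treatment of the innovation. The paper's own proof substitutes only the martingale part, $d\nu_t=\alpha\,dB_t+\Sigma\,dW_t$, into $d\hat Z^{\psi}_t$, so its error SDE has drift $De_t\,dt$ (no $-KUe_t\,dt$ correction); It\^o on $e_te_t^T$ then yields $\dot P=DP+PD^T+\gamma\mathbb{C}_B\gamma^T-\gamma\mathbb{C}_B\alpha^TK^T-K\alpha\mathbb{C}_B\gamma^T+KMK^T$, whose $+KMK^T$ produces exactly the stated $S_0,S_1,S_2$ (and is consistent with the Riccati equation \eqref{13} of Theorem \ref{th1}). You instead kept the full identity $d\nu_t=Ue_t\,dt+\alpha\,dB_t+\Sigma\,dW_t$, which gives the classical error-covariance equation with $-KMK^T$-type damping. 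So while your dynamics are the standard ones in filtering theory, as a proof of this proposition the argument breaks at the reduction step: to reproduce the claimed coefficients you would have to follow the paper and drop the $Ue_t\,dt$ contribution, or else the proposition's coefficients would have to be re-derived in your (sign-reversed) form.
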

	\begin{proof}
		We have:
		\begin{align*}
			&d \big (Z_{t} - \hat{Z}^{\psi}_{t} \big)\\ & = d Z_{t} - d \hat{Z}^{\psi}_{t} \\
			& = \big (D Z_{t} + d \big) dt + \gamma d B_{t} - \bigg [\big (D\hat{Z}^{\psi}_{t} + d \big)dt + \big (PU^{T} + \mathbb{C}_{B} \alpha^T \big)\big (\alpha \mathbb{C}_{B}\alpha^T + \Sigma \mathbb{C}_{W}\Sigma^T \big)^{-1}d \nu_t  \bigg] \\
			& = D \big (Z_t - \hat{Z}^{\psi}_{t} \big) dt + \gamma d B_{t} - \big (PU^{T} + \mathbb{C}_{B} \alpha^T \big) \big (\alpha \mathbb{C}_{B}\alpha^T + \Sigma \mathbb{C}_{W}\Sigma^T \big)^{-1}\big (\alpha d B_{t} + \Sigma d W_{t} \big)                                             
		\end{align*}
		\begin{align*}
			d \big (Z_{t} - \hat{Z}^{\psi}_{t} \big)^T & =\big (Z_{t} - \hat{Z}^{\psi}_{t} \big)^T D^{T} dt + d B^{T}_{t} \cdot \gamma^T - \big (d B^{T}_{t} \cdot \alpha^T + d W^{T}_{t} \cdot \Sigma^T \big)  \big (\alpha \mathbb{C}_{B}\alpha^T + \Sigma \mathbb{C}_{W}\Sigma^T \big)^{-1}\big (UP \\
			& + \alpha \mathbb{C}_{B} \big)
		\end{align*}
		\begin{align*}
			d \bigg [ \big (Z_{t} - \hat{Z}^{\psi}_{t} \big) \big (Z_{t} - \hat{Z}^{\psi}_{t} \big)^{T} \bigg] & = \bigg \{ D  \big (Z_{t} - \hat{Z}^{\psi}_{t} \big) dt + \gamma d B_{t} - \big ( PU^T + \mathbb{C}_{B} \alpha^T \big)  \big (\alpha \mathbb{C}_{B}\alpha^T  + \Sigma \mathbb{C}_{W}\Sigma^T \big)^{-1}\big (\\
			& \alpha d B_{t} + \Sigma d W_{t} \big) \bigg \} \big (Z_{t} - \hat{Z}^{\psi}_{t} \big)^{T}  + \big (Z_{t} - \hat{Z}^{\psi}_{t} \big) \bigg \{ \big (Z_{t} - \hat{Z}^{\psi}_{t} \big)^{T} D^{T}dt + d B^{T}_{t} \cdot \gamma^{T} \\
			& -  \big (dB^{T}_{t}\cdot \alpha^T + d W^{T}\cdot \Sigma^{T} \big ) \big (\alpha \mathbb{C}_{B}\alpha^T  + \Sigma \mathbb{C}_{W}\Sigma^T \big)^{-1} \big (UP + \alpha \mathbb{C}_{B} \big) \bigg \} +\\
			& \bigg \{ D\big (Z_{t} - \hat{Z}^{\psi}_{t} \big) dt + \gamma dB_{t} - \big (PU^T + \mathbb{C}_{B} \alpha^T \big)\big (\alpha \mathbb{C}_{B}\alpha^T \\
			& + \Sigma \mathbb{C}_{W}\Sigma^T \big)^{-1}\big (\alpha d B_t + \Sigma d W_t \big )\bigg \} \bigg \{  \big (Z_{t} - \hat{Z}^{\psi}_{t} \big)^{T} D^{T} dt  + dB^{T}_{t} \cdot \gamma^{T} - \big (d B^{T}_{t} \cdot \alpha^T\\
			& + d W^{T}_{t} \cdot \Sigma^T \big) \big (\alpha \mathbb{C}_{B}\alpha^T  + \Sigma \mathbb{C}_{W}\Sigma^T \big)^{-1}\big (UP + \alpha \mathbb{C}_{B} \big) \bigg \}  
		\end{align*}
		\begin{align*}
			P(t) & = \int_{0}^{t} \bigg \{ DP + PD^{T} + \gamma \mathbb{C}_{B} \gamma^T - \gamma \mathbb{C}_{B} \alpha^{T}  \big (\alpha \mathbb{C}_{B}\alpha^T  + \Sigma \mathbb{C}_{W}\Sigma^T \big)^{-1} \big (UP + \alpha \mathbb{C}_{B} \big) \\
			&- \big (PU^T + \mathbb{C}_{B} \alpha^T\big)  \big (\alpha \mathbb{C}_{B}\alpha^T  + \Sigma \mathbb{C}_{W}\Sigma^T \big)^{-1}\alpha \mathbb{C}_{B}\gamma^T + \big (PU^T + \mathbb{C}_{B}\alpha^T \big )  \big (\alpha \mathbb{C}_{B}\alpha^T  + \Sigma \mathbb{C}_{W}\Sigma^T \big)^{-1}\big (UP \\
			& +\alpha \mathbb{C}_{B} \big ) \bigg \}ds
		\end{align*}
		\begin{align*}
			\Rightarrow \dot{P}(t) & = \big (PU^T + \mathbb{C}_{B}\alpha^T \big ) \big (\alpha \mathbb{C}_{B}\alpha^T  + \Sigma \mathbb{C}_{W}\Sigma^T \big)^{-1}\big (UP + \alpha \mathbb{C}_{B} \big) - \big (PU^T + \mathbb{C}_{B}\alpha^T \big) \big (\alpha \mathbb{C}_{B}\alpha^T  \\
			& + \Sigma \mathbb{C}_{W}\Sigma^T \big)^{-1}\alpha \mathbb{C}_{B}\gamma^{T} \gamma \mathbb{C}_{B}\alpha^T  \big (\alpha \mathbb{C}_{B}\alpha^T  + \Sigma \mathbb{C}_{W}\Sigma^T \big)^{-1} \big (UP + \alpha\mathbb{C}_{B} \big) + \gamma \mathbb{C}_{B}\gamma^T + DP + PD^T \\
			& = PU^T  \big (\alpha \mathbb{C}_{B}\alpha^T  + \Sigma \mathbb{C}_{W}\Sigma^T \big)^{-1}\big (UP + \alpha \mathbb{C}_{B} \big) + \mathbb{C}_{B}\alpha^{T} \big (\alpha \mathbb{C}_{B}\alpha^T  + \Sigma \mathbb{C}_{W}\Sigma^T \big)^{-1}\big (UP + \alpha \mathbb{C}_{B} \big) \\
			&- PU^{T}  \big (\alpha \mathbb{C}_{B}\alpha^T  + \Sigma \mathbb{C}_{W}\Sigma^T \big)^{-1}\alpha \mathbb{C}_{B}\gamma^T - \mathbb{C}_{B}\alpha^{T} \big (\alpha \mathbb{C}_{B}\alpha^T  + \Sigma \mathbb{C}_{W}\Sigma^T \big)^{-1}\alpha\mathbb{C}_{B}\gamma^{T} - \gamma \mathbb{C}_{B}\alpha^{T} \big (\alpha \mathbb{C}_{B}\alpha^T \\
			& + \Sigma \mathbb{C}_{W}\Sigma^T \big)^{-1} UP - \gamma \mathbb{C}_{B}\alpha^{T} \big (\alpha \mathbb{C}_{B}\alpha^T  + \Sigma \mathbb{C}_{W}\Sigma^T \big)^{-1}\alpha \mathbb{C}_{B} + \gamma \mathbb{C}_{B}\gamma^T + DP + PD^T\\
			& = PU^{T}  \big (\alpha \mathbb{C}_{B}\alpha^T  + \Sigma \mathbb{C}_{W}\Sigma^T \big)^{-1}UP + PU^{T} \big (\alpha \mathbb{C}_{B}\alpha^T  + \Sigma \mathbb{C}_{W}\Sigma^T \big)^{-1}\alpha \mathbb{C}_{B} + \mathbb{C}_{B}\alpha^{T} \big (\alpha \mathbb{C}_{B}\alpha^T  +\\
			& \Sigma \mathbb{C}_{W}\Sigma^T \big)^{-1}UP  + \mathbb{C}_{B}\alpha^{T} \big (\alpha \mathbb{C}_{B}\alpha^T  + \Sigma \mathbb{C}_{W}\Sigma^T \big)^{-1}\alpha \mathbb{C}_{B}- PU^{T} \big (\alpha \mathbb{C}_{B}\alpha^T  + \Sigma \mathbb{C}_{W}\Sigma^T \big)^{-1}\alpha\mathbb{C}_{B}\gamma^T -\\
			&  \mathbb{C}_{B}\alpha^{T} \big (\alpha \mathbb{C}_{B}\alpha^T  + \Sigma \mathbb{C}_{W}\Sigma^T \big)^{-1}\alpha \mathbb{C}_{B}\gamma^{T}-\gamma\mathbb{C}_{B}\alpha^T  \big (\alpha \mathbb{C}_{B}\alpha^T  + \Sigma \mathbb{C}_{W}\Sigma^T \big)^{-1} UP - \gamma \mathbb{C}_{B}\alpha^{T} \big (\alpha \mathbb{C}_{B}\alpha^T  + \\
			& \Sigma \mathbb{C}_{W}\Sigma^T \big)^{-1}\alpha \mathbb{C}_{B} + \gamma \mathbb{C}_{B}\gamma^T + DP + PD^T.
		\end{align*}
		Thus,
		\begin{align*}
			\dot{P} - PU^{T} \big (\alpha \mathbb{C}_{B}\alpha^T  + \Sigma \mathbb{C}_{W}\Sigma^T \big)^{-1}UP & = \big (\mathbb{C}_{B}\alpha^T - \gamma \mathbb{C}_{B}\alpha^T \big) \big (\alpha \mathbb{C}_{B}\alpha^T  + \Sigma \mathbb{C}_{W}\Sigma^T \big)^{-1}UP + PU^{T} \big (\alpha \mathbb{C}_{B}\alpha^T  \\
			&+ \Sigma \mathbb{C}_{W}\Sigma^T \big)^{-1}\big ( \alpha \mathbb{C}_{B} - \alpha \mathbb{C}_{B}\gamma^T \big ) + \mathbb{C}_{B}\alpha^{T} \big (\alpha \mathbb{C}_{B}\alpha^T  + \Sigma \mathbb{C}_{W}\Sigma^T \big)^{-1}\alpha \mathbb{C}_{B} \\
			&- \mathbb{C}_{B}\alpha^{T} \big (\alpha \mathbb{C}_{B}\alpha^T  + \Sigma \mathbb{C}_{W}\Sigma^T \big)^{-1}\alpha \mathbb{C}_{B}\gamma^{T}-\gamma \mathbb{C}_{B}\alpha^{T} \big (\alpha \mathbb{C}_{B}\alpha^T  \\
			&+ \Sigma \mathbb{C}_{W}\Sigma^T \big)^{-1}\alpha \mathbb{C}_{B} + \gamma \mathbb{C}_{B}\gamma^T + DP + PD^T.
		\end{align*}
		\begin{align*}
			& \Rightarrow \dot{P} - PU^{T} \big (\alpha \mathbb{C}_{B}\alpha^T  + \Sigma \mathbb{C}_{W}\Sigma^T \big)^{-1}UP - \big (\mathbb{C}_{B}\alpha^T - \gamma \mathbb{C}_{B}\alpha^T \big) \big (\alpha \mathbb{C}_{B}\alpha^T  + \Sigma \mathbb{C}_{W}\Sigma^T \big)^{-1}UP - \\
			& DP - PU^{T} \big (\alpha \mathbb{C}_{B}\alpha^T  + \Sigma \mathbb{C}_{W}\Sigma^T \big)^{-1}\big (\alpha \mathbb{C}_{B} - \alpha \mathbb{C}_{B}\gamma^T \big) - PD^T - \mathbb{C}_{B}\alpha^{T} \big (\alpha \mathbb{C}_{B}\alpha^T  + \Sigma \mathbb{C}_{W}\Sigma^T \big)^{-1}\alpha \mathbb{C}_{B} \\
			&+ \mathbb{C}_{B}\alpha^{T}  \big (\alpha \mathbb{C}_{B}\alpha^T  + \Sigma \mathbb{C}_{W}\Sigma^T \big)^{-1}\alpha \mathbb{C}_{B}\gamma^{T} +\gamma \mathbb{C}_{B}\alpha^{T} \big (\alpha \mathbb{C}_{B}\alpha^T  + \Sigma \mathbb{C}_{W}\Sigma^T \big)^{-1}\alpha \mathbb{C}_{B} - \gamma \mathbb{C}_{B}\gamma^T = 0.\\
			& \Rightarrow \dot{P}(t) + P(t)S_{2}(t)P(t) + S^{T}_{1}(t)P(t) + P(t)S_{1}(t) + S_{0}(t) = 0,
		\end{align*}
		where
		\begin{align*}
			S_{2}(t) & = - U^{T}  \big (\alpha \mathbb{C}_{B}\alpha^T  + \Sigma \mathbb{C}_{W}\Sigma^T \big)^{-1}U \\
			S_{1}(t) & = - U^{T}  \big (\alpha \mathbb{C}_{B}\alpha^T  + \Sigma \mathbb{C}_{W}\Sigma^T \big)^{-1} \big (\alpha \mathbb{C}_{B} - \alpha \mathbb{C}_{B} \gamma^T \big) - D^T \\
			S_{0}(t) & = -\mathbb{C}_{B}\alpha^{T} \big (\alpha \mathbb{C}_{B}\alpha^T  + \Sigma \mathbb{C}_{W}\Sigma^T \big)^{-1} \big (\alpha \mathbb{C}_{B}\gamma^T - \alpha \mathbb{C}_{B} \big) + \gamma \mathbb{C}_{B}\alpha^{T} \big (\alpha \mathbb{C}_{B}\alpha^T  + \Sigma \mathbb{C}_{W}\Sigma^T \big)^{-1}\alpha \mathbb{C}_{B} \\
			& - \gamma \mathbb{C}_{B}\gamma^{T}.
		\end{align*}
		$\square$
	\end{proof}
	We consider the BSDE
	\begin{equation}\label{29}
		\left\{
		\begin{array}{ll}
			\omega_{2}(t) & = 1 + \int_{t}^{T} f_{2}\big (s, \hat{Z}^{\psi}_{s}, \omega_{2}(s), \lambda_{2}(s) \big) ds - \int_{t}^{T}\lambda^{T}_{2}(s)d \nu_{s}\\
			\omega_{2}(T) & = 1,
		\end{array}
		\right.
	\end{equation}
	where $f_2$ is defined as in equation \eqref{25}.\par
	In addition, we consider the following Riccati equation
	\begin{equation}\label{30}
		\left\{
		\begin{array}{ll}
			& \dot{M}(t) + M(t)A_{2}(t)M(t) + A^{T}_{1}M(t) + M(t)A_{1}(t) + A_{0} = 0\\
			& M(s) = 0, \quad s \in [t, T],
		\end{array}
		\right.
	\end{equation}
	where
	\begin{align*}
		A_{2}(t) & = \dfrac{1}{1-\delta} \big (PU^T + \mathbb{C}_{B}\alpha^T \big) \big (\alpha \mathbb{C}_{B}\alpha^T  + \Sigma \mathbb{C}_{W}\Sigma^T \big)^{-1}\big (PU^T + \mathbb{C}_{B}\alpha^T \big)^{T} \\
		A_{1}(t) & = D + \dfrac{\delta}{1-\delta} \big (PU^T + \mathbb{C}_{B} \alpha^T \big) \big (\alpha \mathbb{C}_{B}\alpha^T  + \Sigma \mathbb{C}_{W}\Sigma^T \big)^{-1}U \\
		A_{0}(t) & = \dfrac{\delta}{1 - \delta} U^T  \big (\alpha \mathbb{C}_{B}\alpha^T  + \Sigma \mathbb{C}_{W}\Sigma^T \big)^{-1} U.
	\end{align*}
	The term $n(t)$ is the solution of the linear differential equation:
	\begin{equation*}
		\left\{
		\begin{array}{ll}
			& \dot{n}(t) + \big [A_{1}(t) + A_{2}(t)M(t)\big]^{T} n(t) + M(t) d + \dfrac{\delta}{1 - \delta} \bigg [U^{T} \big (\alpha \mathbb{C}_{B}\alpha^T  + \Sigma \mathbb{C}_{W}\Sigma^T \big)^{-1} + M(t)\big (PU^T \\
			& + \mathbb{C}_{B}\alpha^T \big) \big (\alpha \mathbb{C}_{B}\alpha^T + \Sigma \mathbb{C}_{W}\Sigma^T \big)^{-1}  \bigg]\big (u - r_{t} \mathbf{1} \big) = 0 \\
			& n(s) = 0, \quad s\in [t, T]
		\end{array}
		\right.
	\end{equation*}
	which is equivalent to
	\begin{equation}\label{31}
		\left\{
		\begin{array}{ll}
			& \dot{n}(t) + \big [A_{1}(t) + A_{2}(t)M(t)\big]^{T} n(t) + M(t) d + \dfrac{\delta}{1 - \delta} \bigg [U^{T}  + M(t)\big (PU^T + \mathbb{C}_{B}\alpha^T \big)\bigg ] \big (\alpha \mathbb{C}_{B}\alpha^T  \\
			&+ \Sigma \mathbb{C}_{W}\Sigma^T \big)^{-1} \big (u - r_{t} \mathbf{1} \big) = 0 \\
			& n(s) = 0, \quad s\in [t, T]
		\end{array}
		\right.
	\end{equation}
	and $q(t)$ is the solution of 
	\begin{equation}\label{32}
		\left\{
		\begin{array}{ll}
			& \dot{q}(t) + \dfrac{1}{2} tr \bigg [\big (PU^T + \mathbb{C}_{B} \alpha^T \big) \big (\alpha \mathbb{C}_{B}\alpha^T  + \Sigma \mathbb{C}_{W}\Sigma^T \big)^{-1} \big (PU^T + \mathbb{C}_{B} \alpha^T \big)^T M(t) \bigg]  + \dfrac{1}{2} n^{T}(t)\big (PU^T \\
			& + \mathbb{C}_{B} \alpha^T \big) \big (\alpha \mathbb{C}_{B}\alpha^T  + \Sigma \mathbb{C}_{W}\Sigma^T \big)^{-1} \big (PU^T + \mathbb{C}_{B} \alpha^T \big)^T n(t)  + d^{T}n(t) + \delta r_{t} + \dfrac{\delta}{2(1 - \delta)} \big (u - r_{t} \mathbf{1}\\
			& + \big (PU^T + \mathbb{C}_{B}\alpha^T \big)^T n(t) \big)^{T}\big (\alpha \mathbb{C}_{B}\alpha^T + \Sigma \mathbb{C}_{W}\Sigma^T \big)^{-1}  \big (u - r_{t} \mathbf{1} + \big (PU^T + \mathbb{C}_{B}\alpha^T \big)^T n(t) \big) = 0 \\
			& q(s) = 0, \quad s \in [t, T].
		\end{array}
		\right.
	\end{equation}
	We state the lemma \ref{lm4}:
	\begin{lemma}\label{lm4}
		Assume \textbf{H1)-H2)} and $f_{2}$ defined as in equation \eqref{25}. If equations \eqref{30}-\eqref{31} and \eqref{32} have as solutions $M(t), n(t)$ and $q(t)$ respectively, then the BSDE
		\begin{equation}\label{33}
			\left\{
			\begin{array}{ll}
				\omega_{2}(t) & = 1 + \int_{t}^{T} f_{2} \big (s, \hat{Z}^{\psi}_{t}, \omega_{2}(s), \lambda_{2}(s) \big)\mathrm{d}s - \int_{t}^{T}\lambda^{T}_{2}(s) \mathrm{d} \nu_{s} \\
				\omega_{2}(T) & = 1
			\end{array}
			\right.
		\end{equation} 
		has as solution the couple:
		\begin{align}\nonumber \label{34}
			\hat{\omega}^{z}_{2}(t) & = \exp \bigg \{\dfrac{1}{(1 - \delta)} \int_{t}^{T} \big (\theta(s) + \mu(s) + \delta a(s) \big ) \mathrm{d}s + \dfrac{1}{(1 - \delta)} \bigg [\dfrac{1}{2} z^T M(T)z + n^{T}(T)z + q(T) \bigg]   \bigg \} \\ 
			& + \int_{t}^{T}e^{\frac{\delta}{ (1 - \delta)} \int_{t}^{s}  a(u)\mathrm{d}u } \exp\bigg \{\dfrac{1}{(1 - \delta)} \bigg [\dfrac{1}{2} z^T M(s) z + n^{T}(s)z + q(s) \bigg]  \bigg \} \bigg (1 + \dfrac{(a(s))^{- \frac{\delta}{1-\delta}} }{(\mu(s))^{-\frac{1}{1-\delta}}} \bigg) \mathrm{d}s 
		\end{align}
		and 
		\begin{equation}\label{35}
			\hat{\lambda}_{2}(t) =  \big (\alpha \mathbb{C}_{B}\alpha^T  + \Sigma \mathbb{C}_{W}\Sigma^T \big)^{-1} \big (PU^T + \mathbb{C}_{B} \alpha^T \big)^T \dfrac{\partial}{\partial t} \hat{\omega}_{2}^{\hat{Z}^{\psi}_t}(t).
		\end{equation}
	\end{lemma}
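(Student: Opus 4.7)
The driver of the BSDE \eqref{33} is affine in $\lambda_{2}$ with coefficient $\tfrac{\delta}{1-\delta}(U\hat{Z}^{\psi}_{s}+u-r_{s}\mathbf{1})$, linear in $\omega_{2}$ with coefficient $K(s,\hat{Z}^{\psi}_{s})$ that is quadratic in $\hat{Z}^{\psi}_{s}$, plus a purely time-dependent term $H(s)$. My plan is to mimic the strategy of Lemma \ref{lm3} and kill the linear-in-$\lambda_{2}$ piece by a Girsanov change of measure, reducing \eqref{33} to a linear BSDE, then solve it by a Feynman--Kac representation and evaluate the resulting exponential-quadratic Gaussian expectation by an LQG ansatz that produces exactly the Riccati system \eqref{30}--\eqref{32}. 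Define the new probability measure
\[
\frac{dP_{r}^{2}}{dP_{r}}\bigg|_{\mathcal{F}^{t}} := \mathcal{E}\bigg(-\frac{\delta}{1-\delta}\int_{0}^{\cdot}(U\hat{Z}^{\psi}_{s}+u-r_{s}\mathbf{1})^{T}(\alpha\mathbb{C}_{B}\alpha^{T}+\Sigma\mathbb{C}_{W}\Sigma^{T})^{-1}d\nu_{s}\bigg)_{t}.
\]
By Girsanov's theorem, $\tilde{\nu}^{2}_{t} := \nu_{t} - \tfrac{\delta}{1-\delta}\int_{0}^{t}(U\hat{Z}^{\psi}_{s}+u-r_{s}\mathbf{1})ds$ is an $\mathcal{F}^{t}$-Brownian motion with covariance $\alpha\mathbb{C}_{B}\alpha^{T}+\Sigma\mathbb{C}_{W}\Sigma^{T}$ under $P_{r}^{2}$, and \eqref{33} rewrites as the linear BSDE
\[
\omega_{2}(t) = 1 + \int_{t}^{T}\bigl[H(s) + K(s,\hat{Z}^{\psi}_{s})\omega_{2}(s)\bigr]ds - \int_{t}^{T}\lambda_{2}^{T}(s)d\tilde{\nu}^{2}_{s}.
\]

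\textbf{Explicit representation and LQG evaluation.} By Proposition 4.1.1 of \cite{zhang2017backward}, this linear BSDE admits the unique Feynman--Kac representation
\[
\hat{\omega}_{2}(t) = \tilde{\mathbb{E}}^{2}\bigg[\exp\bigg(\int_{t}^{T}K(s,\hat{Z}^{\psi}_{s})ds\bigg) + \int_{t}^{T}H(s)\exp\bigg(\int_{t}^{s}K(u,\hat{Z}^{\psi}_{u})du\bigg)ds\,\bigg|\,\mathcal{H}_{t}\bigg].
\]
Under $P_{r}^{2}$ the Kalman filter $\hat{Z}^{\psi}$ remains a Gaussian affine diffusion (its drift is shifted by the Girsanov kernel but its diffusion coefficient is unchanged from \eqref{14}), and because $K(s,z)$ is quadratic in $z$ the inner exponential-quadratic expectation is the classical LQG object. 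I would post the ansatz
\[
\tilde{\mathbb{E}}^{2}\bigg[\exp\bigg(\int_{t}^{s}K(u,\hat{Z}^{\psi}_{u})du\bigg)\,\bigg|\,\hat{Z}^{\psi}_{t}=z\bigg] = \exp\bigg\{\frac{\delta}{1-\delta}\int_{t}^{s}a(u)du + \frac{1}{1-\delta}\bigg[\tfrac{1}{2}z^{T}M(s)z + n^{T}(s)z + q(s)\bigg]\bigg\},
\]
apply Itô's formula to this ansatz along the Kalman dynamics, and match powers of $z$: the $zz$-part yields the Riccati equation \eqref{30} for $M$, the $z$-part yields the affine ODE \eqref{31} for $n$, and the constant-in-$z$ part yields the scalar ODE \eqref{32} for $q$. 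Substituting back, the $s=T$ contribution in the Feynman--Kac formula reproduces the first exponential of \eqref{34} (with the $\int_{t}^{T}(\theta+\mu+\delta a)ds$ factor absorbing the residual scalar terms of $K$), and the $s$-integral against $H(s)$ reproduces the second term of \eqref{34}. Finally, $\hat{\lambda}_{2}$ is read off by Itô-expanding $\hat{\omega}_{2}(t)=\hat{\omega}_{2}^{\hat{Z}^{\psi}_{t}}(t)$ along \eqref{14} and matching the $d\nu_{t}$ coefficient against the $-\lambda_{2}^{T}d\nu_{t}$ term of \eqref{33}, which after transposition yields exactly \eqref{35}.

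\textbf{Main obstacle.} The genuinely non-routine step is the LQG matching. After Itô-expanding the quadratic-exponential ansatz, one must verify that the drift-cancellation condition decouples into three algebraic identities reproducing precisely the operators $A_{2},A_{1},A_{0}$ of \eqref{30}; in particular, the factor $\tfrac{1}{1-\delta}(PU^{T}+\mathbb{C}_{B}\alpha^{T})(\alpha\mathbb{C}_{B}\alpha^{T}+\Sigma\mathbb{C}_{W}\Sigma^{T})^{-1}(PU^{T}+\mathbb{C}_{B}\alpha^{T})^{T}$ in $A_{2}$ has to arise from the square of the first-order $z$-derivative of the ansatz contracted with the diffusion matrix of $\hat{Z}^{\psi}$, and the extra $\tfrac{\delta}{1-\delta}$ weights in $A_{1},A_{0}$ must be produced by the Girsanov drift shift combined with the quadratic and linear pieces of $K$. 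Once these algebraic identifications are in place, uniqueness in the admissible class of pairs $(\omega_{2},\lambda_{2})$ follows from the standard well-posedness result for linear BSDEs with bounded coefficients (Proposition 4.1.1 of \cite{zhang2017backward}) already invoked in Lemma \ref{lm3}.
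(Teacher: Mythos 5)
Your proposal follows essentially the same route as the paper: the same Girsanov change to $P^{2}_{r}$ absorbing the $\tfrac{\delta}{1-\delta}\lambda_{2}^{T}(U\hat{Z}^{\psi}_{s}+u-r_{s}\mathbf{1})$ term, reduction to a linear BSDE solved by Feynman--Kac via Proposition 4.1.1 of \cite{zhang2017backward}, evaluation of the exponential-quadratic expectation by the LQG/Riccati ansatz tied to \eqref{30}--\eqref{32} (the paper phrases this as the PDE \eqref{37} solved by $\tfrac{1}{1-\delta}[\tfrac{1}{2}z^{T}M z+n^{T}z+q]$), and reading off $\hat{\lambda}_{2}$ from the $d\nu$ coefficient. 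The only blemish is a sign slip in your stochastic exponential: for your claimed $\tilde{\nu}^{2}_{t}=\nu_{t}-\tfrac{\delta}{1-\delta}\int_{0}^{t}(U\hat{Z}^{\psi}_{s}+u-r_{s}\mathbf{1})ds$ to be a $P^{2}_{r}$-Brownian motion, the Girsanov kernel must carry a $+\tfrac{\delta}{1-\delta}$, as in the paper's definition of $P^{2}_{r}$.
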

	\begin{proof}
		Let us define the probability measure $P^{2}_{r}$ by 
		\begin{align*}
			\dfrac{d P^{2}_{r}}{d P_r} \bigg |_{\mathcal{F}^{t}} & : = \exp \bigg [ \dfrac{\delta}{ (1 - \delta)} \int_{0}^{t} \big (U\hat{Z}^{\psi}_{s} + u - r_{s}\mathbf{1} \big)^{T}  \big (\alpha \mathbb{C}_{B}\alpha^T  + \Sigma \mathbb{C}_{W}\Sigma^T \big)^{-1}  d\nu_{s} \\
			& - \dfrac{1}{2} \bigg (\dfrac{\delta}{1-\delta} \bigg)^{2}\int_{0}^{t}   \big (U\hat{Z}^{\psi}_{s} + u - r_{s}\mathbf{1} \big)^{T}  \big (\alpha \mathbb{C}_{B}\alpha^T  + \Sigma \mathbb{C}_{W}\Sigma^T \big)^{-1}\big (U\hat{Z}^{\psi}_{s} + u - r_{s}\mathbf{1} \big) ds \bigg ].
		\end{align*}
		Thus under $P^{2}_{r}$
		\begin{equation*}
			\nu^{2}_{t} : = \nu_{t} - \dfrac{\delta}{ 1 - \delta} \int_{0}^{t} \big (U\hat{Z}^{\psi}_{s} + u - r_{s}\mathbf{1} \big) ds
		\end{equation*}
		is a $\mathcal{F}^t -$Weiner process with covariance matrix $ \big (\alpha \mathbb{C}_{B}\alpha^T  + \Sigma \mathbb{C}_{W}\Sigma^T \big)$.  Then \eqref{33} can be written as follows:
		\begin{align*}
			\hat{\omega}^{\hat{Z}^{\psi}_{t}}_{2}(t) & = 1 + \int_{t}^{T} \bigg [H(s) +  \dfrac{\delta}{1 - \delta}\lambda^{T}_{2}(s) \big (U\hat{Z}^{\psi}_{s} + u - r_s \mathbf{1} \big)  + K(s, \hat{Z}^{\psi}_{s})\hat{\omega}^{\hat{Z}^{\psi}_{s}}_{2}(s) \bigg] ds\\
			&  - \int_{t}^{T} \lambda^{T}_{2}(s) \bigg (d \nu^{2}_{s} + \dfrac{\delta}{1 - \delta} \big (U\hat{Z}^{\psi}_{s} + u - r_s \mathbf{1} \big) ds \bigg).
		\end{align*}
		\begin{align}\nonumber \label{36}
			\Rightarrow \hat{\omega}^{\hat{Z}^{\psi}_{t}}_{2}(t) & = 1 +  \int_{t}^{T} \bigg [H(s)  + K(s, \hat{Z}^{\psi}_{s})\hat{\omega}^{\hat{Z}^{\psi}_{s}}_{2}(s)  \bigg] ds - \int_{t}^{T} \lambda^{T}_{2}(s) d \nu^{2}_{s}.
		\end{align}
		In addition, under $P^{2}_{r}, \hat{Z}^{\psi}_{s}$ solves 
		\begin{equation*}
			\left\{
			\begin{array}{ll}
				d \hat{Z}^{\psi}_{s} & = \big (D \hat{Z}^{\psi}_{s} + d \big) ds + \big (PU^T + \mathbb{C}_{B} \alpha^T \big) \big (\alpha \mathbb{C}_{B}\alpha^T  + \Sigma \mathbb{C}_{W}\Sigma^T \big)^{-1} \big (d \nu^{2}_{s}  + \dfrac{\delta}{ 1 - \delta} \big (U\hat{Z}^{\psi}_{s} + u - r_s \mathbf{1} \big)ds  \big) \\
				\hat{Z}^{\psi}_{t} & = z,\quad s\in [t, T]
			\end{array}
			\right.
		\end{equation*}
		\begin{equation*}
			\Rightarrow \left\{
			\begin{array}{ll}
				d \hat{Z}^{\psi}_{s} & =\bigg [ D \hat{Z}^{\psi}_{s} + d + \dfrac{\delta}{1-\delta}\big (PU^T + \mathbb{C}_{B} \alpha^T \big) \big (\alpha \mathbb{C}_{B}\alpha^T  + \Sigma \mathbb{C}_{W}\Sigma^T \big)^{-1} \big (U\hat{Z}^{\psi}_{s} + u - r_s \mathbf{1} \big)\bigg ] ds + \big (PU^T \\
				& + \mathbb{C}_{B} \alpha^T \big) \big (\alpha \mathbb{C}_{B}\alpha^T  + \Sigma \mathbb{C}_{W}\Sigma^T \big)^{-1}  d \nu^{2}_{s} \\
				\hat{Z}^{\psi}_{t} & = z,\quad s\in [t, T].
			\end{array}
			\right.
		\end{equation*}
		Using lemma A.3 from \cite{shen2016optimal} and proposition 4.1.1 from \cite{zhang2017backward}, we conclude that the BSDE \eqref{33} has a unique solution $\big (\hat{\omega}^{ \hat{Z}^{\psi}_{t}}_{2}(t), \hat{\lambda}_{2}(t) \big)$ defined by:
		\begin{align*}
			\hat{\omega}^{ \hat{Z}^{\psi}_{t}}_{2}(t) & = e^{\frac{1}{(1 - \delta)} \int_{t}^{T}\big (\theta(s) + \mu(s) \delta a(s)\big ) ds} \mathbb{E}_{P^{2}_{r}} \bigg [ \exp \bigg [ \int_{t}^{T} \dfrac{1}{ (1 - \delta)} \bigg ( \dfrac{\delta}{2(1-\delta)} \big ( U\hat{Z}^{\psi}_{s} + u - r_{s} \mathbf{1} \big )^{T}\big ( \alpha \mathbb{C}_{B} \alpha^T\\
			& + \Sigma \mathbb{C}_{W}\Sigma^T \big )^{-1} \big ( U\hat{Z}^{\psi}_{s} +  u - r_s \mathbf{1} \big ) + \delta r_s \bigg ) ds \bigg ] \bigg | \hat{Z}^{\psi}_{t} = z \bigg ]  + \int_{t}^{T} e^{\frac{1}{ (1 - \delta)} \int_{t}^{s}\big ( \theta (u) + \mu(u) + \delta a(u) \big )du} \mathbb{E}_{P^{2}_{r}} \bigg [\\
			&  \exp \bigg [ \int_{t}^{s} \frac{1}{ (1 - \delta)}  \bigg ( \frac{\delta}{2 (1 - \delta)} \big ( U\hat{Z}^{\psi}_{u} + u  - r_u \mathbf{1} \big )^{T} \big ( \alpha \mathbb{C}_{B}\alpha^T  + \Sigma \mathbb{C}_{W}\Sigma^{T} \big )^{-1}\big ( U\hat{Z}^{\psi}_{u} + u - r_u \mathbf{1} \big ) \\
			& + \delta r_u \bigg ) du \bigg ] \bigg |  \hat{Z}^{\psi}_{t}  = z \bigg ] \bigg (1 + \dfrac{(a(s))^{- \frac{\delta}{1-\delta}} }{(\mu(s))^{-\frac{1}{1-\delta}}} \bigg) ds\\
\hat{\lambda}_{2}(t) & =  \big (\alpha \mathbb{C}_{B}\alpha^T  + \Sigma \mathbb{C}_{W}\Sigma^T \big)^{-1}\big (PU^T + \mathbb{C}_{B}\alpha^T \big)\dfrac{\partial}{\partial t} \hat{\omega}^{ \hat{Z}^{\psi}_{t}}_{2}(t)
		\end{align*}
		Denote \\
\begin{eqnarray*}
&& \mathbf{\hbar}(t; s, z)\\
&& 	= \mathbb{E}_{P^{2}_{r}} \bigg [\exp \bigg (\dfrac{1}{2 (1 - \delta)} \int_{t}^{s} \bigg [\dfrac{\delta}{2 (1 - \delta)}\big (U\hat{Z}^{\psi}_{u} + u - r_u \mathbf{1}\big)^T \big (\alpha \mathbb{C}_{B}\alpha^T  + \Sigma \mathbb{C}_{W}\Sigma^T \big)^{-1} \big (U\hat{Z}^{\psi}_{u} + u - r_u \mathbf{1}\big)\\
&& + \delta r_u  \bigg] du \bigg) \bigg |  \hat{Z}^{\psi}_{t} = z \bigg].
\end{eqnarray*}
		Then $\mathbf{\hbar}$ is the solution of 
		\begin{equation}\label{37}
			\left\{
			\begin{array}{ll}
				& \dfrac{\partial \mathbf{\hbar}}{\partial t} + \dfrac{1}{2} tr \bigg [ \big (PU^T + \mathbb{C}_{B}\alpha^T \big)\big (\alpha \mathbb{C}_{B}\alpha^T  + \Sigma \mathbb{C}_{W}\Sigma^T \big)^{-1}\big (PU^T + \mathbb{C}_{B}\alpha^T \big)^{T} \dfrac{\partial^2 \mathbf{\hbar}}{\partial t^2}  \bigg] \\
				&+ \bigg [d + \dfrac{\delta}{1-\delta}  \big (PU^T + \mathbb{C}_{B}\alpha^T \big)\big (\alpha \mathbb{C}_{B}\alpha^T  + \Sigma \mathbb{C}_{W}\Sigma^T \big)^{-1}\big (u - r_{t}\mathbf{1} \big) + \bigg [ D\\
				& + \dfrac{\delta}{1-\delta} \big (PU^T + \mathbb{C}_{B}\alpha^T \big)\big (\alpha \mathbb{C}_{B}\alpha^T  + \Sigma \mathbb{C}_{W}\Sigma^T \big)^{-1}U \bigg ] z \bigg]^{T} \dfrac{\partial \mathbf{\hbar}}{\partial t}  + \dfrac{1}{1 - \delta} \bigg \{ \dfrac{\delta}{2(1 - \delta)}\big (Uz \\
				& + u - r_t \mathbf{1}\big)^T \big (\alpha \mathbb{C}_{B}\alpha^T  + \Sigma \mathbb{C}_{W}\Sigma^T \big)^{-1} \big (Uz + u - r_t \mathbf{1}\big)  + \delta r_{t} \bigg \} \mathbf{\hbar} = 0, \quad t < s\\
				& \mathbf{\hbar}(T; s, z) = 1.
			\end{array}
			\right.
		\end{equation}
		It is easy to show that 
		\begin{equation*}
			\mathbf{\hbar}^{\star}(t; s, z) = \dfrac{1}{(1 - \delta)} \bigg [\dfrac{1}{2} z^{T} M(t) z + n^{T}(t) z + q(t) \bigg]
		\end{equation*}
		solves equation \eqref{37} which complete the proof.
		$\square$
	\end{proof}
	\begin{theorem}
		Assume \textbf{H1)-H2)} and $f_1$ and $f_2$ defined as in \eqref{28} and equation \eqref{25} respectively. If equations \eqref{30}-\eqref{31} and \eqref{32} have as solutions $M(t), n(t)$ and $q(t)$ respectively, then the problem \eqref{20} has a solution:
		\begin{equation}\label{38}
			\hat{F}\big (t, x, \hat{\omega}_{1}(t),  \hat{\omega}^{ \hat{Z}^{\psi}_{t}}_{2}(t) \big) = \dfrac{1}{\delta} \big (x + \hat{\omega}_{1}(t) \big)^{\delta} \big (\hat{\omega}^{ \hat{Z}^{\psi}_{t}}_{2}(t) \big)^{1 - \delta},
		\end{equation}
		where $\hat{\omega}_{1}(t)$ and $\hat{\omega}^{ \hat{Z}^{\psi}_{t}}_{2}(t)$ are given by equation \eqref{27} and equations \eqref{34} respectively.
	\end{theorem}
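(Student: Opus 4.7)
The plan is to verify the candidate $\hat{F}$ directly by substitution into the HJB equation \eqref{20}, exploiting the multiplicative separability built into the ansatz from Proposition \ref{prop1}. First I would plug $F(t,x,\omega_1,\omega_2)=\tfrac{1}{\delta}(x+\omega_1)^{\delta}\omega_2^{1-\delta}$ into the generator $\mathcal{D}^{\rho,C,\beta}$ and compute every partial derivative explicitly. Because the ansatz factorises as a power of $(x+\omega_1)$ times a power of $\omega_2$, each derivative is a product of these powers with a numerical constant, so after collecting terms a common factor $(x+\omega_1)^{\delta-1}\omega_2^{-\delta}$ (up to lower-order adjustments) can be pulled out. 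The dynamics $dX_t$, $d\omega_1(t)$ and $d\omega_2(t)$ are already given in terms of the innovation $\nu_t$ by \eqref{19} and \eqref{21}.

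Next I would perform the pointwise supremum over $(\rho,C,\beta)\in\mathbb{R}^{k}\times\mathbb{R}^{+}\times\mathbb{R}$. Concavity in $(\rho,C,\beta)$ is ensured by $\delta<1$ together with the positive-definiteness of $\alpha\mathbb{C}_{B}\alpha^{T}+\Sigma\mathbb{C}_{W}\Sigma^{T}$ from assumptions \textbf{H2)}--\textbf{H5)}, so the first-order conditions yield unique maximisers in feedback form: $\hat{C}$ and $\hat{\beta}$ come from the classical CRRA inversion, producing the exponent $-\delta/(1-\delta)$ on $a(t)$ and $1/(1-\delta)$ on $\mu(t)$ that appears inside $H(t)$, while $\hat{\rho}$ comes from a Merton-type linear equation involving $(\alpha\mathbb{C}_{B}\alpha^{T}+\Sigma\mathbb{C}_{W}\Sigma^{T})^{-1}(U\hat{Z}^{\psi}_{t}+u-r_{t}\mathbf{1})$ together with the coupling to $\lambda_2$ from the second BSDE. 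Substituting these back, the HJB reduces to an algebraic identity among drifts; splitting it into the pieces that couple to $(x+\omega_1)$ and those that couple only to $\omega_2$ and the filter, one sees that the coefficients match exactly the drivers $f_{1}, f_{2}$ announced in Proposition \ref{prop1}. By Lemma \ref{lm3}, the BSDE for $\omega_1$ with driver $f_1$ admits the unique solution \eqref{27} with $\hat{\lambda}_{1}=0$, and by Lemma \ref{lm4}, the BSDE for $\omega_2$ with driver $f_2$ admits the unique solution \eqref{34}--\eqref{35}. The terminal condition follows since $\hat{\omega}_{1}(T)=0$ and $\hat{\omega}_{2}^{\hat{Z}^{\psi}_{T}}(T)=1$ give $\hat{F}(T,x,0,1)=x^{\delta}/\delta=V(x)$, matching the boundary data in \eqref{20}.

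The main obstacle will be the bookkeeping of the mixed second-order terms $\partial^{2}_{x\omega_{1}}F\,dX_{t}\,d\omega_{1}(t)$, $\partial^{2}_{x\omega_{2}}F\,dX_{t}\,d\omega_{2}(t)$ and $\partial^{2}_{\omega_{1}\omega_{2}}F\,d\omega_{1}(t)\,d\omega_{2}(t)$ inside $\mathcal{D}^{\rho,C,\beta}$: they all carry the same $d\nu_{t}$ noise, and the supremum over $\rho$ couples them non-trivially through $\lambda_2$. Cancellation of these cross terms is what forces $\hat{\lambda}_{2}$ to take the precise form \eqref{35} with gain matrix $(\alpha\mathbb{C}_{B}\alpha^{T}+\Sigma\mathbb{C}_{W}\Sigma^{T})^{-1}(PU^{T}+\mathbb{C}_{B}\alpha^{T})^{T}$, and tracing this gain back through the equations \eqref{30}--\eqref{32} for $M(t)$, $n(t)$, $q(t)$ is where the Riccati machinery from Lemma \ref{lm4} is indispensable. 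The simplification $\hat{\lambda}_{1}=0$ provided by Lemma \ref{lm3} removes one family of cross terms and makes the remaining algebra tractable; once that collapses, the remaining coefficient matching with $K(t,\hat{Z}^{\psi}_{t})$ in the driver $f_{2}$ reproduces the quadratic form $\tfrac{\delta}{2(1-\delta)^{2}}(U\hat{Z}^{\psi}_{t}+u-r_{t}\mathbf{1})^{T}(\alpha\mathbb{C}_{B}\alpha^{T}+\Sigma\mathbb{C}_{W}\Sigma^{T})^{-1}(U\hat{Z}^{\psi}_{t}+u-r_{t}\mathbf{1})$ and completes the verification.
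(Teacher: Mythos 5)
Your verification-by-substitution plan is exactly the route the paper intends: the theorem is stated without a written proof, but its content is precisely the combination of Proposition \ref{prop1} (the power-utility ansatz, the first-order conditions for $\hat{\rho},\hat{C},\hat{\beta}$, and the identification of the drivers $f_{1},f_{2}$, all worked out in the Appendix), Lemma \ref{lm3} supplying $\hat{\omega}_{1}$ with $\hat{\lambda}_{1}=0$, Lemma \ref{lm4} supplying $\hat{\omega}^{\hat{Z}^{\psi}_{t}}_{2}$ and $\hat{\lambda}_{2}$, and the terminal check $\hat{F}(T,x,0,1)=V(x)$. Your proposal assembles these same ingredients in the same order, so it is correct and essentially identical to the paper's (implicit) argument.
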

	\section{Verification theorem}
	We consider the risk sensitive portfolio optimization problem treated in \cite{nagai2001risk} for the proof of the next theorem.	
	Since in the power utility function (equation \eqref{7}), we have $\delta \in (-\infty , 0) \cup (0,1)$, let us first study the case $\delta \in (0,1)$.
	\begin{theorem}Case of $\delta \in (0,1)$.\\
		Let $\mathcal{L}^{+}_{T}(x, 0)$ the space of admissible strategies. Assume \textbf{H1), H2), H3), H4)} and \textbf{H5)}. Furthermore, we assume that equations \eqref{30}-\eqref{31} and \eqref{32} have solutions $M(t), n(t)$ and $q(t)$ respectively. Then the strategy $\hat{\rho} \big (t, x, \hat{\omega}_{1}(t),  \hat{\omega}^{ \hat{Z}^{\psi}_{t}}_{2}(t)\big), \; \hat{C}  \big (t, x, \hat{\omega}_{1}(t),  \hat{\omega}^{ \hat{Z}^{\psi}_{t}}_{2}(t)\big), \; \hat{\beta}  \big (t, x, \hat{\omega}_{1}(t),  \hat{\omega}^{ \hat{Z}^{\psi}_{t}}_{2}(t)\big) \in \mathcal{L}^{+}_{T}(x, 0)$ is an optimal strategy for the problem \eqref{6}, where:
		\begin{align*}
			\hat{\rho} \big (t, x, \hat{\omega}_{1}(t),  \hat{\omega}^{ \hat{Z}^{\psi}_{t}}_{2}(t)\big) & = \dfrac{1}{1-\delta} \big (\alpha \mathbb{C}_{B}\alpha^T  + \Sigma \mathbb{C}_{W}\Sigma^T \big)^{-1} \bigg [U\hat{Z}^{\psi}_{t} + u - r_t \mathbf{1} + (1-\delta) \big (PU^T \\
			&+ \mathbb{C}_{B} \alpha^T \big)^T \dfrac{D \hat{\omega}^{ \hat{Z}^{\psi}_{t}}_{2}(t)}{\hat{\omega}_{2}(t)} \bigg]\big (x + \hat{\omega}_{1}(t) \big), \\
			& \\
			\hat{C}  \big (t, x, \hat{\omega}_{1}(t),  \hat{\omega}^{ \hat{Z}^{\psi}_{t}}_{2}(t)\big) & = \dfrac{x + \hat{\omega}_{1}(t) }{\hat{\omega}^{ \hat{Z}^{\psi}_{t}}_{2}(t)},\\
			& \\
			\hat{\beta}  \big (t, x, \hat{\omega}_{1}(t),  \hat{\omega}^{ \hat{Z}^{\psi}_{t}}_{2}(t)\big) & = a(t) \bigg [\bigg (\dfrac{a(t)}{\mu (t)} \bigg)^{- \frac{1}{1-\delta}}  \bigg ( \dfrac{x + \hat{\omega}_{1}(t) }{\hat{\omega}^{ \hat{Z}^{\psi}_{t}}_{2}(t)} \bigg ) - x \bigg].
		\end{align*}
	\end{theorem}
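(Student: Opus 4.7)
The plan is to invoke the verification theorem (Theorem \ref{th2}) with the candidate value function $\hat F(t,x,\hat\omega_1(t),\hat\omega_2^{\hat Z^\psi_t}(t)) = \frac{1}{\delta}(x+\hat\omega_1(t))^\delta (\hat\omega_2^{\hat Z^\psi_t}(t))^{1-\delta}$ identified in equation \eqref{38}, together with the candidate triple $(\hat\rho,\hat C,\hat\beta)$ claimed in the theorem statement. The first step is to derive those candidates as pointwise maximizers of the HJB Hamiltonian. Writing the Hamiltonian in equation \eqref{20} as the sum $\mathcal{D}^{\rho,C,\beta}\hat F + V(C) + \mu(t) V(x+\beta/a(t))$ and using $\delta \in (0,1)$, the map $(\rho,C,\beta)\mapsto \mathcal{D}^{\rho,C,\beta}\hat F + V(C) + \mu V(x+\beta/a)$ is strictly concave, so the first-order conditions characterize the unique maximizer. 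Differentiating in $C$ yields $C^{\delta-1} = \hat F_x = (x+\hat\omega_1)^{\delta-1}(\hat\omega_2)^{1-\delta}$, giving $\hat C = (x+\hat\omega_1)/\hat\omega_2$. The $\beta$-FOC gives $\mu (x+\beta/a)^{\delta-1}/a = \hat F_x$, which, after isolating $\beta$, reproduces the stated $\hat\beta$. The $\rho$-FOC is linear in $\rho$ (quadratic Hamiltonian in $\rho$) and solving it using the stochastic differentials $d\nu_t$, $d\hat\omega_2$ together with the representation $\hat\lambda_2(t) = (\alpha\mathbb{C}_B\alpha^T + \Sigma\mathbb{C}_W\Sigma^T)^{-1}(PU^T + \mathbb{C}_B\alpha^T)^T \partial_t \hat\omega_2^{\hat Z^\psi_t}(t)$ from \eqref{35} produces exactly the stated $\hat\rho$.

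Next I would verify admissibility, i.e.\ that $(\hat\rho,\hat C,\hat\beta) \in \mathcal{L}^+_T(x,0)$. By Lemma \ref{lm3}, $\hat\omega_1(t)=\int_t^T e^{-\int_t^s(r_u+a(u))du} R(s)\,ds$ is bounded and strictly positive under \textbf{H3)}; by Lemma \ref{lm4}, $\hat\omega_2^{\hat Z^\psi_t}(t)$ is strictly positive (as an exponential plus a positive integral of exponentials). Plugging $\hat\rho$ into the wealth dynamics \eqref{19} and writing $Y_t := X_t + \hat\omega_1(t)$, the state process $Y_t$ satisfies a linear SDE with coefficients that are linear in $\hat Z^\psi_t$, hence pathwise a geometric-type process; thus $Y_t>0$ almost surely and has finite moments of sufficient order, which together with the bounded exponential controls on $\hat\omega_2$ gives the required $L^2$ and integrability conditions on $\hat\rho$, $\hat C$, $\hat\beta$.

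With admissibility in hand, I would check conditions (a)--(c) of Theorem \ref{th2}. Condition (a) is the HJB inequality for arbitrary $(\rho,C,\beta)$: by the concavity argument above, the Hamiltonian attains its supremum at $(\hat\rho,\hat C,\hat\beta)$, so (a) reduces to (b), the pointwise equality. Condition (b) is established by plugging $\hat F$, $\hat\omega_1$, $\hat\omega_2$ back into the HJB operator: using the BSDEs \eqref{26}--\eqref{33} satisfied by $(\hat\omega_1,\hat\lambda_1)$ and $(\hat\omega_2,\hat\lambda_2)$, together with the explicit forms of $f_1$ and $f_2$, the terms cancel algebraically and the equation is satisfied. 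Condition (c) is immediate since $\hat\omega_1(T)=0$ and $\hat\omega_2(T)=1$, giving $\hat F(T,x,0,1)=\frac{x^\delta}{\delta}=V(x)$.

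Finally, to conclude (d) and identify $\hat F$ with $\varphi$, I would apply Itô's formula to $\hat F(t,X_t,\hat\omega_1(t),\hat\omega_2^{\hat Z^\psi_t}(t))$ on $[0,T]$ under any admissible $(\rho,C,\beta)$, integrate with the discount factor $e^{-\int_0^t(\theta+\mu)ds}$, and take expectations. The drift becomes the HJB operator evaluated at $(\rho,C,\beta)$, which is $\le 0$ by (a), and the stochastic integral is a true martingale (not just a local martingale) thanks to the integrability bounds obtained in the admissibility step. This yields $\hat F(0,x,\hat\omega_1(0),\hat\omega_2^z(0)) \ge \mathcal V(0,x,z;\rho,C,\beta)$, with equality at $(\hat\rho,\hat C,\hat\beta)$ by (b), proving optimality. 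The main obstacle will be the martingale verification: controlling the stochastic-integral terms uniformly in the admissible class requires careful exponential moment bounds on $\hat Z^\psi_t$ and on the wealth process, typically obtained by combining the Lipschitz/boundedness assumptions \textbf{H1)}--\textbf{H5)} with a Novikov-type estimate; this is the technical heart of the argument and the reason one must restrict to $\mathcal L^+_T(x,0)$ rather than the larger class $\overline{\mathcal L}_T(x,0)$.
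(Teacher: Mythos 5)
Your overall architecture coincides with the paper's: derive $(\hat\rho,\hat C,\hat\beta)$ as the pointwise maximizers of the HJB Hamiltonian (the paper does this separately, in Proposition \ref{prop1}), apply It\^o's formula to $e^{-\int_0^t(\theta(s)+\mu(s))ds}\hat F$, localize with stopping times, and pass to the limit. The inequality direction $\hat F\geqslant\mathcal{V}$ for arbitrary admissible controls also goes through essentially as you describe, and in fact more cheaply than you suggest: since $\delta\in(0,1)$ the running and terminal rewards are nonnegative, so localization followed by monotone convergence and Fatou's lemma suffices, and no true-martingale property of the stochastic integral is needed for that direction.

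The genuine gap is in the equality direction at the candidate optimum, which you correctly identify as the technical heart but then dispatch with an appeal to ``a Novikov-type estimate'' derived from \textbf{H1)}--\textbf{H5)}. That will not work as stated: the normalized portfolio $\hat h(t)=\hat\rho(t)/(\hat X(t)+\hat\omega_1(t))$ is affine in the filter $\hat Z^{\psi}_{t}$, which is an unbounded (conditionally Gaussian) diffusion, so $\int_0^T|\hat h(s)|^2\,ds$ has only low-order exponential moments and Novikov's condition is not available. What is actually needed, and what the paper proves, is the uniform integrability bound \eqref{57} on $\{\hat X+\hat\omega_1\}^{(1+\kappa)\delta}\{\hat\omega_2\}^{(1-\delta)(1+\kappa)}$ along the localizing sequence. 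This is obtained by dominating $\{\hat\omega_2^{z}(t)\}^{(1-\delta)(1+\kappa)}$ by $\mathcal{Q}_1 e^{\tilde r(t,z)}$ with $\tilde r(t,z)=\tfrac12 z^{T}M(t)z+n^{T}(t)z+q(t)$ built from the Riccati system \eqref{30}--\eqref{32}, and by representing $\{\hat X(t)+\hat\omega_1(t)\}^{\zeta}e^{\tilde r(t,\hat Z^{\psi}_{t})}$ as a constant multiple of a Dol\'eans exponential $\Phi^{\hat h,\zeta}_{t}$ whose expectation at the stopping time is at most one; the two $e^{\tilde r}$ factors then cancel and the moment bound follows. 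This is precisely where the standing hypothesis that \eqref{30}--\eqref{32} admit solutions $M(t),n(t),q(t)$ enters the proof --- a hypothesis your argument never invokes, which is the clearest sign that the decisive estimate is missing from your plan.
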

	\begin{proof}
		For $\big (\rho ,  C, \beta \big) \in \mathcal{L}^{+}_{T} (x, 0)$, we have :
		\begin{eqnarray*}
			&& \mathrm{d} \bigg [e^{-\int_{0}^{t}\big (\theta(s) + \mu(s) \big)ds } \hat{F} \big (t, X_t , \hat{\omega}_{1}(t),  \hat{\omega}^{ \hat{Z}^{\psi}_{t}}_{2}(t) \big) \bigg]\\ 
			 && = - \big (\theta (t) + \mu (t)\big) e^{-\int_{0}^{t}\big (\theta(s) + \mu(s) \big)ds } \hat{F} + e^{-\int_{0}^{t}\big (\theta(s) + \mu(s) \big)ds } \hat{F}_{t} \big (t, X_t , \hat{\omega}_{1}(t),  \hat{\omega}^{ \hat{Z}^{\psi}_{t}}_{2}(t) \big) dt +\\
			 && e^{-\int_{0}^{t}\big (\theta(s) + \mu(s) \big)ds } \hat{F}_{x} \big (t, X_t , \hat{\omega}_{1}(t),   \hat{\omega}^{ \hat{Z}^{\psi}_{t}}_{2}(t) \big) d X_{t} + e^{-\int_{0}^{t}\big (\theta(s) + \mu(s) \big)ds } \hat{F}_{\omega_1} \big (t, X_t , \hat{\omega}_{1}(t),   \hat{\omega}^{ \hat{Z}^{\psi}_{t}}_{2}(t) \big) d \hat{\omega}_{1}(t)\\
			 &&  + e^{-\int_{0}^{t}\big (\theta(s) + \mu(s) \big)ds } \hat{F}_{\omega_2} \big (t, X_t , \hat{\omega}_{1}(t),   \hat{\omega}^{ \hat{Z}^{\psi}_{t}}_{2}(t) \big) d   \hat{\omega}^{ \hat{Z}^{\psi}_{t}}_{2}(t)\\
			 && = e^{-\int_{0}^{t}\big (\theta(s) + \mu(s) \big)ds } \bigg \{ - \big (\theta(t) + \mu (t)\big)  \hat{F} \big (t, X_t , \hat{\omega}_{1}(t),   \hat{\omega}^{ \hat{Z}^{\psi}_{t}}_{2}(t) \big) +   \hat{F}_{t} \big (t, X_t , \hat{\omega}_{1}(t),   \hat{\omega}^{ \hat{Z}^{\psi}_{t}}_{2}(t) \big)dt \\
			&& + \hat{F}_{x} \bigg [\bigg (r_{t} X_{t}  + \hat{\rho}^{T}(t) \big (U\hat{Z}^{\psi}_{t} + u - r_{t}\mathbf{1} \big) - \hat{C}(t) - \hat{\beta} (t) + R(t) \bigg)dt + \hat{\rho}^{T}(t)d \nu_{t}  \bigg] \\
			&& + \hat{F}_{\omega_1} \bigg [- \hat{f}_{1} dt + \hat{\lambda}^{T}_{1} (t) d \nu_{t} \bigg] + \hat{F}_{\omega_2}  \bigg [- \hat{f}_{2} dt + \hat{\lambda}^{T}_{2} (t) d \nu_{t} \bigg]\bigg \} \\
			&& = e^{-\int_{0}^{t}\big (\theta(s) + \mu(s) \big)ds } \bigg \{ - \big (\theta (t) + \mu(t)\big) \hat{F} + \bigg [ \hat{F}_{t} + \hat{F}_{x} \bigg [ r_t X_t + \hat{\rho}^{T}(t) \big ( U  \hat{Z}^{\psi}_{t} + u - r_{t}\mathbf{1} \big ) - \hat{C}(t) - \hat{\beta}(t) + R(t) \bigg ] \\
			&& - \hat{F}_{\omega_1} \hat{f}_{1} -  \hat{F}_{\omega_2} \hat{f}_{2} \bigg ] dt + \bigg [ \hat{F}_{x} \hat{\rho}^{T}(t) + \hat{F}_{\omega_1} \hat{\lambda}^{T}_{1}(t) +  \hat{F}_{\omega_2} \hat{\lambda}^{T}_{2}(t) \bigg ] d \nu_{t} \bigg \} \\
			&& = e^{-\int_{0}^{t}\big (\theta(s) + \mu(s) \big)ds } \bigg \{ - \big (\theta (t) + \mu(t)\big) \hat{F} + \bigg [ \hat{F}_{t} + \hat{F}_{x} \bigg [ r_{t} X_{t} + \hat{\rho}^{T}(t) \big (U\hat{Z}^{\psi}_{t} + u   - r_{t}\mathbf{1} \big) - \hat{C}(t) - \hat{\beta}(t) + R(t) \bigg ]\\
			&& - \hat{F}_{\omega_1} \hat{f}_{1} - \hat{F}_{\omega_2} \hat{f}_{2} \bigg ] dt + \bigg [ \hat{F}_{x} \hat{\rho}^{T}(t) +  \hat{F}_{\omega_1} \hat{\lambda}_1^{T}(t) +  \hat{F}_{\omega_2} \hat{\lambda}_2^{T}(t) \bigg ] d \nu_{t} \bigg \}  \\
		&& = e^{-\int_{0}^{t}\big (\theta(s) + \mu(s) \big)ds } \bigg \{ - \big (\theta (t) + \mu(t)\big) \hat{F} + \mathcal{D}^{\rho,  C, \beta} \hat{F} \big (t, X_t , \hat{\omega}_{1}(t),   \hat{\omega}^{ \hat{Z}^{\psi}_{t}}_{2}(t) \big) dt \\
			&& + \bigg [  \hat{F}_{x} \big (t, X_t , \hat{\omega}_{1}(t),   \hat{\omega}^{ \hat{Z}^{\psi}_{t}}_{2}(t) \big) \hat{\rho}^{T}(t) +  \hat{F}_{\omega_1} \big (t, X_t , \hat{\omega}_{1}(t),   \hat{\omega}^{ \hat{Z}^{\psi}_{t}}_{2}(t) \big) \hat{\lambda}_1^{T}(t) +  \hat{F}_{\omega_2} \big (t, X_t , \hat{\omega}_{1}(t),   \hat{\omega}^{ \hat{Z}^{\psi}_{t}}_{2}(t) \big) \hat{\lambda}_2^{T}(t) \bigg ] d \nu_{t} \bigg \} \\
			&& \leqslant \bigg \{ \mathcal{D}^{\rho, C, \beta}  \hat{F}\big (t, X_t , \hat{\omega}_{1}(t),   \hat{\omega}^{ \hat{Z}^{\psi}_{t}}_{2}(t) \big)dt + \bigg [\hat{F}_{x} \big (t, X_t , \hat{\omega}_{1}(t),   \hat{\omega}^{ \hat{Z}^{\psi}_{t}}_{2}(t) \big) \hat{\rho}^{T}(t)  +  \hat{F}_{\omega_1} \big (t, X_t , \hat{\omega}_{1}(t),   \hat{\omega}^{ \hat{Z}^{\psi}_{t}}_{2}(t) \big) \hat{\lambda}_1^{T}(t)\\
				&& +  \hat{F}_{\omega_2} \big (t, X_t , \hat{\omega}_{1}(t),   \hat{\omega}^{ \hat{Z}^{\psi}_{t}}_{2}(t) \big) \hat{\lambda}_2^{T}(t) \bigg ] d \nu_{t} \bigg \} e^{- \int_{0}^{t}\big (\theta(s) + \mu(s) \big)ds }.
		\end{eqnarray*}
		Hence 
		\begin{align}\nonumber\label{50}
			& \mathrm{d} \bigg [e^{-\int_{0}^{t}\big (\theta(s) + \mu(s) \big)ds } \hat{F} \big (t, X_t , \hat{\omega}_{1}(t),  \hat{\omega}^{ \hat{Z}^{\psi}_{t}}_{2}(t) \big) \bigg] \\\nonumber
			&  \leqslant \bigg \{ - \bigg [ V (\hat{C}_{t}) + \mu(t) V \bigg (X_t + \dfrac{\hat{\beta}(t) }{a(t)} \bigg) \bigg ] dt + \bigg [ \bigg (\dfrac{\hat{\omega}^{ \hat{Z}^{\psi}_{t}}_{2}(t) }{X_{t} +  \hat{\omega}_{1}(t)}\bigg )^{1-\delta} \hat{\rho}^{T}(t) \\\nonumber
			& + \bigg (\dfrac{1-\delta}{\delta} \bigg) \bigg (\dfrac{X_{t} +  \hat{\omega}_{1}(t)}{\hat{\omega}^{ \hat{Z}^{\psi}_{t}}_{2}(t)} \bigg)^{\delta} \big (\alpha \mathbb{C}_{B}\alpha^T  + \Sigma \mathbb{C}_{W}\Sigma^T \big)^{-1} \big (PU^{T} + \mathbb{C}_{B} \alpha^{T} \big) \dfrac{\partial}{\partial t}  \hat{\omega}^{ \hat{Z}^{\psi}_{t}}_{2}(t) \bigg ] d \nu_{t} \bigg \}  e^{- \int_{0}^{t}\big (\theta(s) + \mu(s) \big)ds }.
		\end{align}
		Let us set 
		\begin{equation*}
			\xi^{R} : = \inf \bigg \{ t > 0; X(t) +  \hat{\omega}_{1}(t) > R \bigg \} \wedge \inf \bigg \{ t > 0; \big | \hat{Z}^{\psi}_{t}  \big| > R \bigg \} \wedge \inf\bigg \{ t > 0; \; \int_{0}^{t}\bigg | \dfrac{\rho(s)}{ X(s) + \hat{\omega}_{1}(s)} \bigg|^2 ds > R \bigg \}. 
		\end{equation*}
		Then we have
		\begin{align*}
			\hat{F}\big (0, x, \hat{\omega}_{1}(0), \hat{\omega}_{2}^{z}(0) \big) & \geqslant \mathbb{E} \bigg [ \int_{0}^{T\wedge \xi^R} e^{- \int_{0}^{t} \big (\theta(s) + \mu(s) \big)ds } \bigg [ V(C_t) + \mu(t) V \bigg (X_t + \dfrac{\beta(t)}{a(t)} \bigg) \bigg ] dt \\
			& + e^{- \int_{0}^{T \wedge \xi^R} \big (\theta(t) + \mu(t)\big) dt } \hat{F} \bigg (T\wedge \xi^R, X (T\wedge \xi^R), \hat{\omega}_{1} (T\wedge \xi^R),  \hat{\omega}^{ \hat{Z}^{\psi}_{T\wedge \xi^R }}_{2}(T\wedge \xi^R)  \bigg)\bigg ] 
		\end{align*}
		From the monotone convergence theorem and Fatou's lemma, it follows that:
		\begin{align}\nonumber\label{51}
			\hat{F}\big (0, x, \hat{\omega}_{1}(0), \hat{\omega}_{2}^{z}(0) \big) & \geqslant \mathbb{E} \bigg [ \int_{0}^{T\wedge \xi^R} e^{- \int_{0}^{t} \big (\theta(s) + \mu(s) \big)ds } \bigg [ V(C_t) + \mu(t) V \bigg (X_t + \dfrac{\beta(t)}{a(t)} \bigg) \bigg ] dt \\
			& + e^{- \int_{0}^{T} \big (\theta(t) + \mu(t)\big) dt }V(X(T)) \bigg] = \mathcal{V} \big (0, x, z; \rho , C, \beta \big). 
		\end{align}
		Setting 
		\begin{align}\nonumber \label{52}
			\hat{h} (t) & = \dfrac{\hat{\rho} \big (t, X_t , \hat{\omega}_{1}(t),  \hat{\omega}^{ \hat{Z}^{\psi}_{t}}_{2}(t) \big)}{\hat{X}(t) + \hat{\omega}_{1}(t) }\\\nonumber
			& = \dfrac{1}{\hat{X}(t) + \hat{\omega}_{1}(t)} \bigg \{ \dfrac{1}{1-\delta} \big (\alpha \mathbb{C}_{B}\alpha^T  + \Sigma \mathbb{C}_{W}\Sigma^T \big)^{-1} \bigg [U\hat{Z}^{\psi}_{t} + u - r_{t}\mathbf{1} +  (1-\delta)  \big (PU^{T} \\\nonumber
			& + \mathbb{C}_{B} \alpha^{T} \big)^{T} \dfrac{D \hat{\omega}^{ \hat{Z}^{\psi}_{t}}_{2}(t)}{\hat{\omega}^{\hat{Z}^{\psi}_{t}}_{2}(t)} \bigg ] \big (X_{t} + \hat{\omega}_{1}(t) \big) \bigg \} \\
			& = \dfrac{1}{(1 - \delta)} \big (\alpha \mathbb{C}_{B}\alpha^T  + \Sigma \mathbb{C}_{W}\Sigma^T \big)^{-1} \bigg [U\hat{Z}^{\psi}_{t} + u - r_{t}\mathbf{1} + (1-\delta)\big (PU^{T} + \mathbb{C}_{B} \alpha^{T} \big)^{T} \dfrac{D \hat{\omega}^{ \hat{Z}^{\psi}_{t}}_{2}(t)}{\hat{\omega}^{\hat{Z}^{\psi}_{t}}_{2}(t)} \bigg ] .  
		\end{align}
		It follows from \eqref{19} and \eqref{52} that
		\begin{align*}
			& \mathrm{d} \bigg \{\hat{X}(t) + \hat{\omega}_{1}(t)  \bigg \} \\
			& = d  \hat{X}(t) + d \hat{\omega}_{1}(t) \\
			& = \bigg \{r_{t} \hat{X}_{t} + \hat{\rho}^{T}(t) \big (U \hat{Z}^{\psi}_{t} + u - r_{t} \mathbf{1}  \big) - \hat{C}(t) - \hat{\beta}(t) + R(t) \bigg \} dt + \hat{\rho}^{T}(t) d \nu_{t} + \bigg [\big (r_{t} + a(t) \big) \hat{\omega}_{1}(t) - R(t) \bigg] dt \\
			& = \bigg \{r_{t} \big (\hat{X}_{t}  + \hat{\omega}_{1} (t)\big) + a(t) \hat{\omega}_{1}(t)  +  \hat{\rho}^{T}(t)  \big (U \hat{Z}^{\psi}_{t} + u - r_{t} \mathbf{1}  \big) - H(t) \bigg (\dfrac{ \hat{X}(t) + \hat{\omega}_{1}(t) }{\hat{\omega}^{\hat{Z}^{\psi}_{t}}_{2}(t)} \bigg)  + a(t)\hat{X}(t)  \bigg \} dt   \\
			& + \hat{\rho}^{T}(t) d \nu_{t}\\
			& = \bigg \{ \big ( r_{t}  + a(t) \big ) \big (\hat{X}_{t}  + \hat{\omega}_{1} (t)\big)  +  \hat{\rho}^{T}(t)  \big (U \hat{Z}^{\psi}_{t} + u - r_{t} \mathbf{1}  \big) - H(t) \bigg (\dfrac{ \hat{X}(t) + \hat{\omega}_{1}(t) }{\hat{\omega}^{\hat{Z}^{\psi}_{t}}_{2}(t)} \bigg)  \bigg \} dt   + \hat{\rho}^{T}(t) d \nu_{t}.
		\end{align*}
		\begin{equation*}
			\Rightarrow \dfrac{d \bigg \{ \hat{X}(t) + \hat{\omega}_{1}(t) \bigg \}}{ \hat{X}_{t}  + \hat{\omega}_{1}(t)} = \bigg [r_{t} + a(t) + \hat{h}^{T}(t)\big (U \hat{Z}^{\psi}_{t} + u - r_{t} \mathbf{1}  \big) - \dfrac{H(t)}{\hat{\omega}^{\hat{Z}^{\psi}_{t}}_{2}(t)}  \bigg] dt  + \hat{h}^{T}(t) d \nu_{t}, 
		\end{equation*}
		i.e.,
		\begin{equation}\label{100}
			\hat{X}(t) + \hat{\omega}_{1}(t) = \big (x + \hat{\omega}_{1}(0) \big) \exp \bigg (\int_{0}^{t} \bigg [ r_{s} + a(s) + \hat{h}^{T}(s) \big (U \hat{Z}^{\psi}_{s} + u - r_{s} \mathbf{1}  \big) - \dfrac{H(s)}{\hat{\omega}^{\hat{Z}^{\psi}_{s}}_{2}(s) } \bigg] ds + \int_{0}^{t} \hat{h}^{T}(s) d \nu_{s} \bigg) > 0.
		\end{equation}
		Hence 
		\begin{equation*}
			\big (\hat{\rho}, \hat{C}, \hat{\beta} \big) \in \mathcal{L}^{+}_{T}(x, 0).
		\end{equation*}
		Consider 
		\begin{equation*}
			\hat{\xi}^{R} : = \inf \bigg \{t>0; \; \hat{X}(t) + \hat{\omega}_{1}(t) > R  \bigg \} \wedge \inf \bigg \{t >0; | \hat{Z}^{\psi}_{t} | >0 \bigg \}.
		\end{equation*}
		Then we see that $\exists \mathcal{Q}_{T}$ such that for $t < T \wedge \hat{\xi}^{R}$,
		\begin{equation*}
			\int_{0}^{t} \big |\hat{h}(s) \big|^2 ds = \int_{0}^{t} \bigg |\dfrac{\hat{\rho}(s) }{\hat{X}_{s} + \hat{\omega}_{1}(s)} \bigg|^{2} \leqslant \int_{0}^{T} \big |\hat{\rho}(s)\big|^{2} = \mathcal{Q}_{T}.
		\end{equation*}
		Similarly, we can show that 
		\begin{align}\nonumber \label{53}
			\hat{F}\big (0, x, \hat{\omega}_{1}(0), \hat{\omega}^{z}_{2}(0) \big) & = \mathbb{E} \bigg [\int_{0}^{T \wedge \hat{\xi}^{R}} e^{- \int_{0}^{t} (\theta(s) + \mu(s)) ds } \bigg [V(\hat{C}_{t}) + \mu (t) V \big (\hat{X}_{t} + \dfrac{\hat{\beta}_{t} }{a(t)}\big)\bigg]dt \\
			& + e^{- \int_{0}^{T \wedge \hat{\xi}^{R}} (\theta(t) + \mu(t))dt }  \hat{F} \bigg (T \wedge \hat{\xi}^{R}, \hat{X} (T \wedge \hat{\xi}^{R}), \hat{\omega}_{1}(T \wedge \hat{\xi}^{R}), \hat{\omega}^{\hat{Z}^{\psi}_{T \wedge \hat{\xi}^{R}}  }_{2}(T \wedge \hat{\xi}^{R})  \bigg) \bigg].
		\end{align}
		Note that 
		\begin{align*}
			\hat{X}(t) + \dfrac{\hat{\beta}(t) }{a(t)} & = \hat{X}(t) + \bigg (\dfrac{a(t)}{\mu (t)} \bigg)^{-\frac{1}{1-\delta}} \bigg (\dfrac{\hat{X}(t) + \hat{\omega}_{1}(t)}{\hat{\omega}^{\hat{Z}^{\psi}_{t} }_{2}(t)} \bigg) - \hat{X}(t) \\
			& =  \bigg (\dfrac{a(t)}{\mu (t)} \bigg)^{-\frac{1}{1-\delta}} \bigg (\dfrac{\hat{X}(t) + \hat{\omega}_{1}(t)}{\hat{\omega}^{\hat{Z}^{\psi}_{t} }_{2}(t)} \bigg).
		\end{align*}
		Thus, from the monotone convergence theorem and for all fixed $T>0$, we have
		\begin{align}\nonumber\label{54}
		& \lim_{R \to + \infty}  \mathbb{E} \bigg [\int_{0}^{T \wedge \hat{\xi}^{R}} e^{- \int_{0}^{t} (\theta(s) + \mu(s)) ds } \bigg [V(\hat{C}_{t}) + \mu (t) V \big (\hat{X}_{t} + \dfrac{\hat{\beta}_{t} }{a(t)}\big)\bigg]dt \bigg ]\\
		& =  \mathbb{E} \bigg [\int_{0}^{T} e^{- \int_{0}^{t} (\theta(s) + \mu(s)) ds } \bigg [V(\hat{C}_{t}) + \mu (t) V \big (\hat{X}_{t} + \dfrac{\hat{\beta}_{t} }{a(t)}\big)\bigg]dt \bigg ].
		\end{align}
		Next, we need to show that 
		\begin{align}\nonumber\label{55}
		& \lim_{R \to + \infty} \mathbb{E} \bigg [ e^{- \int_{0}^{T \wedge \hat{\xi}^{R}} (\theta(t) + \mu(t))dt }  \hat{F} \bigg (T \wedge \hat{\xi}^{R}, \hat{X} (T \wedge \hat{\xi}^{R}), \hat{\omega}_{1}(T \wedge \hat{\xi}^{R}), \hat{\omega}^{\hat{Z}^{\psi}_{T \wedge \hat{\xi}^{R}}  }_{2}(T \wedge \hat{\xi}^{R})  \bigg) \bigg]\\
		&  = \mathbb{E} \bigg [ e^{-\int^{T}_{0} (\theta (t) + \mu(t))dt} V (\hat{X}_{T})\bigg ].
		\end{align}
		For that, we just need ton prove the uniform integrability of the random variables
		\begin{equation}\label{56}
			\bigg \{\hat{F} \bigg (T \wedge \hat{\xi}^{R}, \hat{X} (T \wedge \hat{\xi}^{R}), \hat{\omega}_{1}(T \wedge \hat{\xi}^{R}), \hat{\omega}^{\hat{Z}^{\psi}_{T \wedge \hat{\xi}^{R}}  }_{2}(T \wedge \hat{\xi}^{R})  \bigg) \bigg \}.
		\end{equation} 
		We notice that
		\begin{equation*}
			\hat{F} \bigg (T \wedge \hat{\xi}^{R}, \hat{X} (T \wedge \hat{\xi}^{R}), \hat{\omega}_{1}(T \wedge \hat{\xi}^{R}), \hat{\omega}^{\hat{Z}^{\psi}_{T \wedge \hat{\xi}^{R}}  }_{2}(T \wedge \hat{\xi}^{R})  \bigg)  \leqslant  \mathcal{Q} \bigg [ \hat{X} \big(T \wedge \hat{\xi}^{R} \big) + \hat{\omega}_{1}\big(T \wedge \hat{\xi}^{R} \big)\bigg]^{\delta} \bigg [ \hat{\omega}^{\hat{Z}^{\psi}_{T \wedge \hat{\xi}^{R}}  }_{2} (T \wedge \hat{\xi}^{R}) \bigg]^{1-\delta}.
		\end{equation*}
		Therefore, to prove the uniform integrability of \eqref{56}, we need to prove that $\exists \mathcal{Q} > 0$ such that 
		\begin{equation}\label{57}
			\mathbb{E} \bigg [\bigg \{ \hat{X} \big(T \wedge \hat{\xi}^{R} \big) + \hat{\omega}_{1}\big(T \wedge \hat{\xi}^{R} \big) \bigg \}^{\zeta} \bigg \{  \hat{\omega}^{\hat{Z}^{\psi}_{T \wedge \hat{\xi}^{R}}  }_{2} (T \wedge \hat{\xi}^{R}) \bigg  \}^{(1-\delta)( 1 + \kappa)}    \bigg] \leqslant \mathcal{Q},
		\end{equation}
		where $\zeta : = (1 + \kappa) \delta$. The following equations are prove in \citep{hata2020optimal}
		\begin{equation}\label{58}
			\big \{\hat{\omega}_{2}^{z}(t) \big \}^{(1 - \delta)(1 + \kappa)} \leqslant \mathcal{Q}_{1} e^{\tilde{r} (t, z)}
		\end{equation}
		and 
		\begin{equation}\label{59}
			\bigg \{\hat{X} (t) + \hat{\omega}_{1}(t)\bigg \}^{\zeta} e^{\tilde{r} (t , Z^{\psi}_{t} )} \leqslant \big \{x + \hat{\omega}_{1}(0)  \big \}^{\zeta} e^{\zeta  \int_{0}^{T} \mu(t)dt + \tilde{r}(0,z)} \Phi_{t}^{\hat{h}, \zeta}\quad P_{r}-\text{a.s},
		\end{equation}
		where
		\begin{equation*}
			\tilde{r}(t, z) = \dfrac{1}{2} z^{T} M(t) z + n^{T}(t) z + q(t)
		\end{equation*}
		and $\Phi_{t}^{\hat{h}, \zeta}$ is defined by
		\begin{align*}
			\Phi_{t}^{\hat{h}, \zeta} & = \exp \bigg \{  -\dfrac{1}{2} \int_{0}^{t} \bigg [ \zeta \hat{h}(s) + \big (\alpha \mathbb{C}_{B}\alpha^{T}  + \Sigma \mathbb{C}_{W} \Sigma^{T}\big)^{-1} \big (PU^{T} + \mathbb{C}_{B} \alpha^{T} \big)^{T} D \tilde{r} (s, \hat{Z}^{\psi}_{s}) \bigg ]^{T} \big (\alpha \mathbb{C}_{B}\alpha^{T}  \\
			& + \Sigma \mathbb{C}_{W} \Sigma^{T}\big)\bigg [ \zeta \hat{h}(s) + \big (\alpha \mathbb{C}_{B}\alpha^{T}  + \Sigma \mathbb{C}_{W} \Sigma^{T}\big)^{-1} \big (PU^{T} + \mathbb{C}_{B} \alpha^{T} \big)^{T} D \tilde{r} (s, \hat{Z}^{\psi}_{s}) \bigg ] ds   \\
			& + \int_{0}^{t} \bigg [ \zeta \hat{h}(s) + \big (\alpha \mathbb{C}_{B}\alpha^{T}  + \Sigma \mathbb{C}_{W} \Sigma^{T}\big)^{-1} \big (PU^{T} + \mathbb{C}_{B} \alpha^{T} \big)^{T} D \tilde{r} (s, \hat{Z}^{\psi}_{s}) \bigg ]^{T} d \nu_{s} \bigg \} .
		\end{align*}  
		Using equation \eqref{58} and equation \eqref{59}, we have
		\begin{align*}
			& \big \{\hat{X}(t) + \hat{\omega}_{1}(t) \big \}^{\zeta} \big \{ \hat{\omega}^{z}_{2}(t) \big \}^{(1 - \delta) (1 + \kappa)} e^{\tilde{r} (t,  \hat{Z}^{\psi}_{t})}\\
			 & \leqslant \big \{x + \hat{\omega}_{1}(0)  \big \}^{\zeta} \mathcal{Q}_{1} e^{\tilde{r}(t,z) } e^{\zeta \int_{0}^{T} \mu(t)dt + \tilde{r}(0,z)} \Phi^{\hat{h}, \zeta}_{t} \; P_{r}-\text{a.s.}\\
			&\Rightarrow  \bigg \{\hat{X}\big(T \wedge \hat{\xi}^{R} \big) + \hat{\omega}_{1} \big(T \wedge \hat{\xi}^{R} \big) \bigg \}^{\zeta} \bigg \{ \hat{\omega}^{\hat{Z}^{\psi}_{T \wedge \hat{\xi}^{R} }}_{2} \big(T \wedge \hat{\xi}^{R} \big) \bigg \}^{(1 - \delta) (1 + \kappa)} e^{\tilde{r} \big (T \wedge \hat{\xi}^{R} ,  \hat{Z}^{\psi}_{T \wedge \hat{\xi}^{R} } \big)}\\
			& \leqslant \big \{x + \hat{\omega}_{1}(0)  \big \}^{\zeta} \mathcal{Q}_{1} e^{\tilde{r}\big(T \wedge \hat{\xi}^{R} ,\hat{Z}^{\psi}_{T \wedge \hat{\xi}^{R} } \big) }  \exp \bigg \{\zeta \int_{0}^{T} \mu(t)dt + \tilde{r}(0,z) \bigg \} \Phi^{\hat{h}, \zeta}_{T \wedge \hat{\xi}^{R} } \; P_{r}-\text{a.s.}\\
			& \Rightarrow  \mathbb{E} \bigg [  \bigg \{\hat{X}\big(T \wedge \hat{\xi}^{R} \big) + \hat{\omega}_{1} \big(T \wedge \hat{\xi}^{R} \big) \bigg \}^{\zeta} \bigg \{ \hat{\omega}^{\hat{Z}^{\psi}_{T \wedge \hat{\xi}^{R} }}_{2} \big(T \wedge \hat{\xi}^{R} \big) \bigg \}^{(1 - \delta) (1 + \kappa)} \bigg] \\			
& \leqslant \big \{x + \hat{\omega}_{1}(0)  \big \}^{\zeta} \mathcal{Q}_{1} \exp \bigg \{\zeta \int_{0}^{T} \mu(t)dt + \tilde{r}(0,z) \bigg \} ,
		\end{align*}
		since $\mathbb{E} \big [  \Phi^{\hat{h}, \zeta}_{T \wedge \hat{\xi}^{R} } \big] = 1$.  Hence equation \eqref{57} and equation \eqref{55} follow.\par
		Finally, from equations \eqref{53}, \eqref{54} and equation \eqref{55}, we see that  $\forall T > 0$ fixed,
		\begin{equation*}
			\varphi \big (0, x, z \big) = \mathcal{V} \big (0, x, z; \hat{\rho}, \hat{C}, \hat{\beta} \big) = \hat{F} \big (0, x, \hat{\omega}_{1}(0), \hat{\omega}^{z}_{2}(0) \big). 
		\end{equation*}
		$\square$
	\end{proof}
		In the risk averse case, i.e. , $\delta \in (-\infty , 0)$, we consider the space of admissible strategies by:
		\begin{align*}
			\mathcal{L}^{-}_{T} (x, 0) & = \bigg \{ \big (\rho , C, \beta \big) \in \overline{\mathcal{L}}_{T}(0, x); \big |\rho (t) \big| \leqslant \tilde{\upsilon} \big (1 + \big |\hat{Z}^{\psi}_{t} \big| \big) \big [X(t) + \hat{\omega}_{1}(t) \big], \; \forall \epsilon >0\; \text{such that}\\
			& C(t) \leqslant \overline{\upsilon} e^{\epsilon \big (1 + \big |\hat{Z}^{\psi}_{t} \big|^2 \big)} \big [X(t) + \hat{\omega}_{1}(t) \big], \big |\beta (t) - a(t)\hat{\omega}_{1}(t) \big| \leqslant  \overline{\upsilon} e^{\epsilon \big (1 + \big |\hat{Z}^{\psi}_{t} \big|^2 \big)} \big [X(t) + \hat{\omega}_{1}(t) \big]\bigg \}
		\end{align*}
		where $\overline{\upsilon}$ and $\tilde{\upsilon}$ are some constants depending on $\rho, C$ and $\beta$.
	\begin{theorem}Case of $\delta \in (-\infty , 0)$.\\
		Assume \textbf{H1), H2), H3), H4)} and \textbf{H5)}. Furthermore, we assume that equations \eqref{30}-\eqref{31} and \eqref{32} have solutions $M(t), n(t)$ and $q(t)$ respectively.  Then $\big (\hat{\rho}(t), \hat{C}(t), \hat{\beta}(t) \big) \in \mathcal{L}^{-}_{T} (x, 0)$ is an optimal strategy for the problem \eqref{6}, where 
		\begin{align*}
			\hat{\rho} \big (t, x, \hat{\omega}_{1}(t),  \hat{\omega}^{ \hat{Z}^{\psi}_{t}}_{2}(t)\big) & = \dfrac{1}{1-\delta} \big (\alpha \mathbb{C}_{B}\alpha^T  + \Sigma \mathbb{C}_{W}\Sigma^T \big)^{-1} \bigg [U\hat{Z}^{\psi}_{t} + u - r_t \mathbf{1} + (1-\delta) \big (PU^T \\
			& + \mathbb{C}_{B} \alpha^T \big)^T \dfrac{D \hat{\omega}^{ \hat{Z}^{\psi}_{t}}_{2}(t)}{\hat{\omega}_{2}(t)} \bigg]\big (x + \hat{\omega}_{1}(t) \big) \\
			&\\
			\hat{C}  \big (t, x, \hat{\omega}_{1}(t),  \hat{\omega}^{ \hat{Z}^{\psi}_{t}}_{2}(t)\big) & = \dfrac{x + \hat{\omega}_{1}(t) }{\hat{\omega}^{ \hat{Z}^{\psi}_{t}}_{2}(t)}\\
			&\\
			\hat{\beta}  \big (t, x, \hat{\omega}_{1}(t),  \hat{\omega}^{ \hat{Z}^{\psi}_{t}}_{2}(t)\big) & = a(t) \bigg [\bigg (\dfrac{a(t)}{\mu (t)} \bigg)^{- \frac{1}{1-\delta}}  \bigg ( \dfrac{x + \hat{\omega}_{1}(t) }{\hat{\omega}^{ \hat{Z}^{\psi}_{t}}_{2}(t)} \bigg ) - x \bigg].
		\end{align*}
		Moreover, $\varphi \big (0, x, z \big) = \hat{F} \big (0, x, \hat{\omega}_{1}(0), \hat{\omega}^{z}_{2}(0) \big)$. 
	\end{theorem}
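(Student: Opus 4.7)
The plan is to mirror the structure of the preceding verification theorem for $\delta\in(0,1)$, but to adapt each step to the risk-averse regime $\delta<0$, where $V(y)=y^{\delta}/\delta$ is negative-valued and decreasing. Since the candidate value function $\hat{F}(t,x,\hat{\omega}_1(t),\hat{\omega}_2^{\hat{Z}^{\psi}_t}(t)) = \frac{1}{\delta}(x+\hat{\omega}_1(t))^{\delta}(\hat{\omega}_2^{\hat{Z}^{\psi}_t}(t))^{1-\delta}$ is derived from the same HJB equation and the same pair of BSDEs, both Theorem \ref{th2}(a)--(b) continue to hold with the same maximizer $(\hat\rho,\hat C,\hat\beta)$. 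The first step is therefore to apply It\^o's formula to $e^{-\int_0^t(\theta(s)+\mu(s))ds}\hat{F}(t,X_t,\hat{\omega}_1(t),\hat{\omega}_2^{\hat{Z}^{\psi}_t}(t))$ just as in the calculation leading to \eqref{50}, localize by a stopping time $\xi^R$ and its analogue $\hat\xi^R$ for the optimal wealth, and integrate; condition (a) yields the inequality $\hat{F}(0,x,\hat{\omega}_1(0),\hat{\omega}_2^z(0))\geq \mathcal{V}(0,x,z;\rho,C,\beta)$ for any admissible triple (after passing $R\to\infty$), and condition (b) yields equality for the candidate.

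Next I would check that $(\hat\rho,\hat C,\hat\beta)\in\mathcal{L}^{-}_T(x,0)$. Using the explicit expressions of $\hat\rho$, $\hat C$, $\hat\beta$ together with the representation of $\hat{\omega}_2^z(t)$ from Lemma \ref{lm4} and the quadratic form $\tilde r(t,z)=\tfrac{1}{2}z^{T}M(t)z+n^{T}(t)z+q(t)$, one sees that $\hat\rho$ is linear in $\hat Z^{\psi}_t$ (multiplying $X_t+\hat\omega_1(t)$), while $\hat C$ and $\hat\beta-a(t)\hat\omega_1(t)$ are bounded by $\bar\upsilon\, e^{\epsilon(1+|\hat Z^{\psi}_t|^2)}(X_t+\hat\omega_1(t))$ for every $\epsilon>0$, because the polynomial prefactors $(a/\mu)^{-1/(1-\delta)}/\hat\omega_2^z$ absorb into the admissible growth thanks to the positivity of $M(t)$ and the boundedness of $r,a,\mu$ on $[0,T]$. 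The strict positivity $\hat X(t)+\hat\omega_1(t)>0$ follows exactly as in equation \eqref{100}.

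The delicate step is the passage to the limit $R\to\infty$. Because $V\leq 0$ on the admissible region, the running integral $\int_0^{T\wedge\xi^R} e^{-\int_0^t(\theta+\mu)}[V(C_t)+\mu(t)V(X_t+\beta/a)]dt$ is non-positive and decreasing in $R$, so the inequality direction for the general admissible strategy follows from monotone convergence applied to $-V$, combined with Fatou's lemma for the terminal term $e^{-\int_0^T(\theta+\mu)}V(X(T))$ (here $V(X(T))\leq 0$ gives a trivial upper envelope). For the equality along the optimal $(\hat\rho,\hat C,\hat\beta)$, I would carry out the uniform-integrability argument used in the $\delta\in(0,1)$ case in its dual form: derive an upper bound $\{\hat X(t)+\hat\omega_1(t)\}^{\zeta}\{\hat\omega_2^z(t)\}^{(1-\delta)(1+\kappa)} e^{\tilde r(t,\hat Z^{\psi}_t)} \leq \mathcal{Q}\,\Phi_t^{\hat h,\zeta}$ analogous to \eqref{58}--\eqref{59}, where the exponent $\zeta=(1+\kappa)\delta$ is now negative, and exploit $\mathbb{E}[\Phi_{T\wedge\hat\xi^R}^{\hat h,\zeta}]=1$ together with the admissibility constraints in $\mathcal{L}^{-}_T$ to dominate both $|V(\hat C_t)+\mu(t)V(\hat X_t+\hat\beta/a)|$ and $|V(\hat X(T\wedge\hat\xi^R))|$ by an integrable envelope, so that dominated convergence applies.

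The main obstacle will be the last uniform-integrability step: unlike the $\delta\in(0,1)$ case, where monotone convergence sufficed, the sign of $V$ forces a two-sided control of the wealth and of $\hat\omega_2^z$, and the quadratic growth $e^{\epsilon(1+|\hat Z^{\psi}_t|^2)}$ allowed in $\mathcal{L}^{-}_T$ must be balanced against the Gaussian-type decay of $e^{\tilde r(t,z)}$ coming from the Riccati solution $M(t)$. This is exactly why the admissible class is calibrated with arbitrarily small $\epsilon>0$: by choosing $\epsilon$ small relative to the smallest eigenvalue of $M(t)/(1-\delta)$ on $[0,T]$, one obtains the integrable dominating function required by Novikov's condition applied to $\Phi^{\hat h,\zeta}$. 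Once this is established, combining the two inequalities produces $\varphi(0,x,z)=\hat F(0,x,\hat\omega_1(0),\hat\omega_2^z(0))$ and optimality of $(\hat\rho,\hat C,\hat\beta)$.
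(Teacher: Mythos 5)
Your overall architecture (It\^o expansion of $e^{-\int_0^t(\theta+\mu)}\hat F$, localization, admissibility check via the lower bound on $\hat\omega_2^{\hat Z^\psi_t}$, positivity of $\hat X+\hat\omega_1$) matches the paper, but you have placed the analytic difficulty on the wrong side of the argument, and the step you dismiss as routine actually fails as you describe it. For a \emph{general} admissible $(\rho,C,\beta)$ the localized inequality reads $\hat F(0,\cdot)\geqslant \mathbb{E}[A_n]+\mathbb{E}[B_n]$ with $B_n=e^{-\int_0^{T\wedge\xi_n}(\theta+\mu)}\hat F(T\wedge\xi_n,\cdot)\leqslant 0$. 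Fatou's lemma for nonpositive variables gives $\limsup_n\mathbb{E}[B_n]\leqslant\mathbb{E}[B]$, which is the \emph{wrong} direction: it does not let you replace $\mathbb{E}[B_n]$ by $\mathbb{E}[B]$ in a lower bound, so ``monotone convergence plus Fatou with a trivial upper envelope'' does not yield \eqref{60}. The danger is real, not technical: for $\delta<0$ the factor $(X+\hat\omega_1)^{\delta}$ blows up as the wealth approaches the level $1/n$ built into the stopping time, so mass can escape to $-\infty$ along the localizing sequence. What is needed is uniform integrability of the terminal terms, and this cannot be obtained directly at the exponent $\delta$. The paper's essential device, which your proposal omits entirely, is the perturbation $\overline{\zeta}=\delta/(1+2\kappa)$: one proves the moment bound \eqref{65} at the milder exponent $\tilde\delta=\tfrac{1+\kappa}{1+2\kappa}\delta$ (using the growth constraints defining $\mathcal{L}^-_T$ and the estimate from \cite{hata2020optimal}), obtains \eqref{67} via the utility comparison $V_{\overline{\zeta}}(x)>V_\delta(x)+2\kappa/\delta$ of \eqref{66}, and only then recovers \eqref{60} by letting $\kappa\to 0$, which requires the $C^1$-dependence of the Riccati solutions $M,n,q$ of \eqref{30}--\eqref{32} on $\delta$ (as in \cite{nagai2011asymptotics}).

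Conversely, the direction you propose to labor over --- the candidate strategy $(\hat\rho,\hat C,\hat\beta)$ --- is the easy one: there the localized relation is an \emph{equality}, both the running and terminal terms are nonpositive, and reverse Fatou now points the right way, giving $\mathcal{V}(0,x,z;\hat\rho,\hat C,\hat\beta)\geqslant \hat F(0,x,\hat\omega_1(0),\hat\omega_2^{z}(0))$ with no uniform integrability or dominated convergence needed; this is exactly how the paper concludes. So your ``dual form'' uniform-integrability construction with $\Phi^{\hat h,\zeta}$ and Novikov balancing is expended where plain Fatou suffices, while the genuinely hard inequality is left unproved. As written, the proposal has a gap precisely at the claim that \eqref{60} follows from monotone convergence and Fatou.
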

	\begin{proof}
		Denote $\hat{F}_{\delta} , \hat{\omega}_{2, \delta}$ and $V_{\delta}$ for dependence of $\hat{F}, \hat{\omega}_{2}$ and $V$ on $\delta$ respectively. We need to prove first that $\forall \big (\rho , C, \beta \big) \in \mathcal{L}^{-}_{T} (x, 0)$,
		\begin{equation}\label{60}
			\mathcal{V} \big (0, x, z; \rho, C, \beta \big) \leqslant \hat{F} \big (0, x, \hat{\omega}_{1}(0), \hat{\omega}^{z}_{2}(0) \big).
		\end{equation}
		Let us recall that $\hat{Z}^{\psi}_{t}$ satisfies equation \eqref{14}.  Taking equation \eqref{50} into account, we set 
		\begin{align}\nonumber\label{61}
			\xi^{\delta}_{n} : = & \inf \bigg \{ t > 0; \big |\hat{Z}^{\psi}_{t}  \big| > 0 \bigg \} \wedge \inf \bigg \{ t > 0; X(t) + \hat{\omega}_{1}(t) < \dfrac{1}{n} \bigg \} \wedge \inf \bigg \{ t > 0; \\
			& \int_{0}^{t} \bigg |\dfrac{\rho(s)}{X(s) + \hat{\omega}_{1}(s) } \bigg|^2 ds > n \bigg \} \wedge \inf \bigg \{ t > 0; \int_{0}^{t} \bigg [\big (C(s) \big)^{\delta} + \bigg(X(s) + \dfrac{\beta (s)}{a(s)} \bigg)^{\delta} \bigg] ds > n \bigg \}
		\end{align}
		As in equation \eqref{51}, we have for, we have for $\big (\rho , C, \beta \big) \in \mathcal{L}^{-}_{T} (x, 0)$ 
		\begin{align*}
			\hat{F}_{\delta} \big (0, x, \hat{\omega}_{1}(0), \hat{\omega}^{z}_{2,\delta}(0) \big) \geqslant & \mathbb{E} \bigg [ \int_{0}^{T \wedge \xi^{\delta}_n } e^{- \int_{0}^{t} \big ( \theta (s) + \mu(s)\big) ds } \bigg [ V_{\delta} \big (C(t) \big) + \mu (t) V_{\delta} \bigg (X(t) + \dfrac{\beta (t)}{a(t)} \bigg ) \bigg ] dt \\
			& + e^{-\int_{0}^{T\wedge \xi^{\delta}_n } \big (\theta(t) + \mu(t)\big)dt } \hat{F}_{\delta} \bigg (T\wedge \xi^{\delta}_n , X \big (T\wedge \xi^{\delta}_n \big), \hat{\omega}_{1} \big (T\wedge \xi^{\delta}_n \big), \hat{\omega}^{ \hat{Z}^{\psi}_{T\wedge \xi^{\delta}_n}  }_{2,\delta} \big (T\wedge \xi^{\delta}_n \big)  \bigg) \bigg ] 
		\end{align*}
		If
		\begin{equation*}
			\mathbb{E} \bigg [\int_{0}^{T} e^{-\int_{0}^{t} \big (\theta(s) + \mu(s)\big)ds } \bigg [V_{\delta} \big (C(t) \big)  + \mu(t) V_{\delta} \bigg (X(t) + \dfrac{\beta (t)}{a(t)} \bigg) \bigg] dt  \bigg] = + \infty,
		\end{equation*}
		then equation \eqref{60} automatically holds.\par
		Next, we assume that 
		\begin{equation}\label{62}
			\mathbb{E} \bigg [ \int_{0}^{T} e^{-\int_{0}^{t} \big (\theta (s) + \mu(s) \big) ds } \bigg [ V_{\delta} \big (C(t) \big) + \mu(t) V_{\delta} \bigg (X(t) + \dfrac{\beta(t)}{a(t)} \bigg) \bigg ] dt \bigg ] > -\infty 
		\end{equation}
		For sufficiently small $\kappa > 0$, we set
		\begin{equation*}
			\overline{ \zeta} = \overline{\zeta}_{\kappa, \delta} := \dfrac{\delta}{ 1 + 2 \kappa}
		\end{equation*}
		Thus we consider the following relation:
		\begin{align}\nonumber\label{63}
			\hat{F}_{\overline{\zeta}} \big (0, x, \hat{\omega}_{1}(0), \hat{\omega}^{z}_{2, \overline{\zeta}}(0) \big) & \geqslant \mathbb{E} \bigg [ \int_{0}^{T \wedge \xi^{\overline{\zeta}}_n }e^{-\int_{0}^{t} \big (\theta (s) + \mu(s) \big) ds } \bigg [ V_{\overline{\zeta}} \big (C(t) \big) + \mu(t) V_{\overline{\zeta}} \bigg (X(t) + \dfrac{\beta(t)}{a(t)} \bigg) \bigg ] dt\\
			& + e^{-\int_{0}^{T\wedge \xi^{ \overline{\zeta}}_n } \big (\theta(t) + \mu(t)\big)dt } \hat{F}_{ \overline{\zeta}} \bigg (T\wedge \xi^{ \overline{\zeta}}_n , X \big (T\wedge \xi^{ \overline{\zeta}}_n \big), \hat{\omega}_{1} \big (T\wedge \xi^{ \overline{\zeta}}_n \big), \hat{\omega}^{ \hat{Z}^{\psi}_{T\wedge \xi^{ \overline{\zeta}}_n}  }_{2, \overline{\zeta}} \big (T\wedge \xi^{ \overline{\zeta}}_n \big)  \bigg) \bigg ] 
		\end{align}
		where $\hat{F}_{\overline{\zeta}} \big (0, x, \hat{\omega}_{1}(0), \hat{\omega}^{z}_{2, \overline{\zeta}}(0) \big)$ and $\xi^{ \overline{\zeta}}_n$ are defined in equations \eqref{38} and \eqref{61} respectively when $\delta$ is replaced by $\overline{\zeta}$. Denote $\xi^{ \overline{\zeta}}_n$ as $\xi_n$ in the following. Then we need to show that $\exists \mathcal{Q}_{0}$ independent of $n$ such that
		\begin{equation*}
			\mathbb{E} \bigg [ \bigg \{ \bigg ( X \big ( T\wedge \xi_{n} \big ) + \hat{\omega}_{1} \big ( T\wedge \xi_{n} \big ) \bigg )^{\overline{\zeta}} \bigg ( \hat{\omega}^{\hat{Z}^{\psi}_{T \wedge \xi_n}}_{2, \overline{\zeta}} \big ( T\wedge \xi_{n} \big ) \bigg )^{1 - \overline{\zeta}} \bigg \}^{(1 + \kappa)} \bigg ] \leqslant \mathcal{Q}_{0}.
		\end{equation*}
		For that, we use the following relation:
		\begin{eqnarray*}
		&& \mathbb{E} \bigg [ \bigg \{ \bigg ( X \big ( T\wedge \xi_{n} \big ) + \hat{\omega}_{1} \big ( T\wedge \xi_{n} \big ) \bigg )^{\overline{\zeta}} \bigg ( \hat{\omega}^{\hat{Z}^{\psi}_{T \wedge \xi_n}}_{2, \overline{\zeta}} \big ( T\wedge \xi_{n} \big ) \bigg )^{1 - \overline{\zeta}} \bigg \}^{(1 + \kappa)} \bigg ]\\
		&& = \mathbb{E} \bigg [  \bigg ( X \big ( T\wedge \xi_{n} \big ) + \hat{\omega}_{1} \big ( T\wedge \xi_{n} \big ) \bigg )^{\tilde{\delta}} \bigg ( \hat{\omega}^{\hat{Z}^{\psi}_{T \wedge \xi_n}}_{2, \overline{\zeta}} \big ( T\wedge \xi_{n} \big ) \bigg )^{\iota + 1}  \bigg ] .
		\end{eqnarray*}
		where $\tilde{\delta} = \tilde{\delta}_{\kappa , \delta}$ and $\iota = \iota_{\kappa, \delta}$ are defined by
		\begin{equation}\label{64}
			\tilde{\delta} : = \dfrac{1 + \kappa}{1 + 2 \kappa}\delta \quad \text{and} \quad \iota : = (1 - \overline{\zeta})(1 + \kappa) - 1
		\end{equation}
		respectively.\par
		Thus, It means to show that 
		\begin{equation}\label{65}
			\mathbb{E} \bigg [  \bigg ( X \big ( T\wedge \xi_{n} \big ) + \hat{\omega}_{1} \big ( T\wedge \xi_{n} \big ) \bigg )^{\tilde{\delta}} \bigg ( \hat{\omega}^{\hat{Z}^{\psi}_{T \wedge \xi_n}}_{2, \overline{\zeta}} \big ( T\wedge \xi_{n} \big ) \bigg )^{\iota + 1}  \bigg ] \leqslant \mathcal{Q}_{0} .
		\end{equation}
		See \cite{hata2020optimal} for the proof of equation \eqref{65}.\par
		From equation \eqref{65}, we have the uniformly integrability of the term
		\begin{equation*}
			e^{-\int_{0}^{T\wedge \xi^{ \overline{\zeta}}_n } \big (\theta(t) + \mu(t)\big)dt } \hat{F}_{ \overline{\zeta}} \bigg (T\wedge \xi^{ \overline{\zeta}}_n , X \big (T\wedge \xi^{ \overline{\zeta}}_n \big), \hat{\omega}_{1} \big (T\wedge \xi^{ \overline{\zeta}}_n \big), \hat{\omega}^{ \hat{Z}^{\psi}_{T\wedge \xi^{ \overline{\zeta}}_n}  }_{2, \overline{\zeta}} \big (T\wedge \xi^{ \overline{\zeta}}_n \big)  \bigg)
		\end{equation*}
		in equation \eqref{63}. Hence, as $n \to + \infty$ in equation \eqref{63}, since 
		\begin{equation}\label{66}
			V_{\overline{\zeta}} (x) > V_{\delta} (x) + \dfrac{2 \kappa}{\delta},
		\end{equation}
		we have
		\begin{align}\nonumber\label{67}
			\hat{F}_{\overline{\zeta}} \big (0, x, \hat{\omega}_{1}(0), \hat{\omega}^{z}_{2, \overline{\zeta}}(0) \big) & \geqslant \mathcal{V} \big (0, x, z;\rho, C, \beta; \overline{\zeta} \big)\\\nonumber
			&  \geqslant \mathcal{V} \big (0, x, z;\rho, C, \beta; \delta \big) \\
			& + \dfrac{2\kappa}{\delta} \bigg [ \int_{0}^{T} e^{-\int_{0}^{t} \big (\theta(s) + \mu(s)\big) ds} \big (1 + \mu(t) \big) dt + e^{-\int_{0}^{T} \big (\theta(t) + \mu(t) \big) dt} \bigg ].
		\end{align}
		where $\mathcal{V} \big (0, x, z;\rho, C, \beta; \delta \big)$ is written for the dependence of $\mathcal{V} \big (0, x, z;\rho, C, \beta \big)$ on $\delta$. Indeed for $x_{1} > 0$ fixed, we can show that $g (\lambda) = \dfrac{1}{\lambda} \big (x^{\lambda}_{1} - 1 \big), \; \lambda < 0$ is increasing in $\lambda$ and equation \eqref{66} follows from it. \par
		Similarly to Lemma 3.1 of \cite{nagai2011asymptotics}, the solution $M(t)$ of \eqref{30} is in $C^{1}$class with respect to $\delta$ and also are solution $n(t)$ and $q(t)$ of \eqref{31} and \eqref{32} respectively. Thus, since $\hat{F}_{\delta}$ is continuous in $\delta$, by taking $\kappa \to + \infty$ in equation \eqref{67}, we get equation \eqref{60}.\par
		Let's now prove that $\big (\hat{\rho}, \hat{C}, \hat{\beta} \big)$ gives an optimal strategy for \eqref{6}. Fist, we show that $\big (\hat{\rho}, \hat{C}, \hat{\beta} \big)$ gives an admissible strategy. Set
		\begin{equation*}
			\hat{h}(t) = \dfrac{\hat{\rho} (t)}{\hat{X} (t) + \hat{\omega}_{1}(t)}.
		\end{equation*}
		From equation \eqref{100}, we have $\hat{X}(t) + \hat{\omega}_{1}(t) > 0.$ We can see that there exists $\epsilon_{1} > 0$ such that $\big |\hat{\rho}(t)\big| \leqslant \epsilon_{1} \big (1 + \big |\hat{Z}^{\psi}_{t} \big| \big) \big (\hat{X} (t) + \hat{\omega}_{1} (t) \big)$. \par
		Next, from equations \eqref{33} and \eqref{35}, we have 
		\begin{equation*}
			\hat{\omega}^{z}_{2}(t) = e^{ \frac{1}{1-\delta} \int_{t}^{T} \big [- \big (\theta(s) + \mu(s)\big) + \delta a(s) \big] ds + \frac{1}{1-\delta} \Psi_{\delta} (t; T, z)} + \int_{t}^{T}  e^{ \frac{1}{1-\delta} \int_{t}^{T} \big [- \big (\theta(u) + \mu(u)\big) + \delta a(u) \big] du }\cdot e^{\frac{1}{1-\delta} \Psi_{\delta}(t; s, z) } H(s) ds , 
		\end{equation*}
		where
		\begin{align*}
			\Psi_{\delta} \big (t; s, z \big) & = \big (1 - \delta \big) \mathbb{E}_{P_{r}^{2}} \bigg [ \exp \bigg [ \int_{t}^{s} \dfrac{1}{(1 - \delta)} \bigg ( \dfrac{\delta}{2 (1 - \delta) } \big (U\hat{Z}^{\psi}_{v} \\
			& + u - r_{v}\mathbf{1} \big)^{T} \big (\alpha \mathbb{C}_{B}\alpha^{T} + \Sigma \mathbb{C}_{W} \Sigma^T \big)^{-1}\big (U\hat{Z}^{\psi}_{v} + u - r_{v}\mathbf{1} \big)  + \delta r_{v} \bigg ) dv \bigg ] \bigg | \hat{Z}^{\psi}_{t} = z \bigg ].
		\end{align*}
		We notice that for $\delta < 0, \; \Psi_{\delta} (t; s, z)$ is monotone decreasing in $s \in [t, T]$. Hence
		\begin{equation*}
			\Psi_{\delta} \big (t; s,z \big) = \dfrac{1}{2} z^{T} M(t) z + n^{T}(t)z + q(t).
		\end{equation*}
		For $\kappa_{0} , \epsilon > 0$ sufficiently small, $\exists \upsilon_{2} > 0$ such that for $t \leqslant T - \kappa_{0}$, we have 
		\begin{align*}
			\bigg (\hat{\omega}^{\hat{Z}^{\psi}_{t}}_{2}(t) \bigg)^{-1} & \leqslant \upsilon_{2} \bigg [\int_{t}^{t + \kappa_0} e^{\frac{1}{1-\delta} \Psi_{\delta} \big (t; s, \hat{Z}^{\psi}_{t}\big)} ds \bigg]^{-1} \leqslant \upsilon_{2} e^{- \frac{1}{1-\delta} \Psi_{\delta}\big (t; t + \kappa_0 , \hat{Z}^{\psi}_t \big) }\\
			& \leqslant \upsilon_{2} e^{\epsilon \big (1 +  |\hat{Z}^{\psi}_{t} |^2 \big)}
		\end{align*}
		If $\kappa_{0}$ is sufficiently small and for $T - \kappa_{0} \leqslant t \leqslant T, \; \exists \upsilon_{3} > 0$ such that
		\begin{equation*}
			\bigg (\hat{\omega}^{\hat{Z}^{\psi}_{t}}_{2}(t) \bigg)^{-1}  \leqslant \upsilon_{3} e^{- \frac{1}{1-\delta} \Psi_{\delta}\big (t; t + \kappa_0 , \hat{Z}^{\psi}_t \big) } \leqslant  \upsilon_{3} e^{\epsilon \big (1 +  |\hat{Z}^{\psi}_{t} |^2 \big)}
		\end{equation*}
		Therefore $\big (\hat{\rho}, \hat{C}, \hat{\beta} \big) \in \mathcal{L}^{-}_{T}(x, 0)$.\par
		Now, setting
		\begin{equation*}
			\hat{\xi}_{n}: = \inf \bigg \{ t > 0;  |\hat{Z}^{\psi}_{t} | > n \bigg \} \wedge \inf \bigg \{ t > 0; \hat{X}(t) + \hat{\omega}_{1}(t) < \dfrac{1}{n} \bigg \} \wedge \inf \bigg \{ t > 0; \hat{C}(t) < \dfrac{1}{n}, \hat{X}(t) + \dfrac{\hat{\beta}(t)}{a(t)} < \dfrac{1}{n} \bigg \},
		\end{equation*}
		we have
		\begin{align*}
			\hat{F}_{\delta} \big (0, x, \hat{\omega}_{1}(0), \hat{\omega}^{z}_{2,\delta}(0) \big) = & \mathbb{E} \bigg [ \int_{0}^{T \wedge \hat{\xi}_n } e^{- \int_{0}^{t} \big ( \theta (s) + \mu(s)\big) ds } \bigg [ V_{\delta} \big (\hat{C}(t) \big) + \mu (t) V_{\delta} \bigg (\hat{X}(t) + \dfrac{\hat{\beta} (t)}{a(t)} \bigg ) \bigg ] dt \\
			& + e^{-\int_{0}^{T\wedge \hat{\xi}_n } \big (\theta(t) + \mu(t)\big)dt } \hat{F}_{\delta} \bigg (T\wedge \hat{\xi}_n , \hat{X} \big (T\wedge \hat{\xi}_n \big), \hat{\omega}_{1} \big (T\wedge \hat{\xi}_n \big), \hat{\omega}^{ \hat{Z}^{\psi}_{T\wedge \hat{\xi}_n}  }_{2,\delta} \big (T\wedge \hat{\xi}_n \big)  \bigg) \bigg ]. 
		\end{align*}
		Using Fatou's lemma, we obtain
		\begin{equation*}
			\mathcal{V} \big (0,  x, z; \hat{\rho} , \hat{C}, \hat{\beta}\big) \geqslant \hat{F}  \big (0, x, \hat{\omega}_{1}(0), \hat{\omega}^{z}_{2}(0) \big) .
		\end{equation*}
		$\square$
	\end{proof}
	\section{Conclusion} 
	The present study investigated an optimal investment-consumption-life insurance problem for a wage earner with partial information, where the Kalman filter was nonlinear and the prices of risky assets were correlated to the factor process. The combination of the Hamilton-Jacobi-Bellman (HJB) equation and two backward stochastic differential equations (BSDE) was derived using the dynamic programming principle. Additionally, the nonlinear filter was derived via the Zakai equation, and the verification theorem was demonstrated, leading to the construction of an optimal strategy. Future research can consider further correlation by making the assumption that the process $\big (\alpha(t) \big){t\in [0, \xi \wedge T]}$ in equations \eqref{4} and \eqref{5} depended on the factor process; that is, $\big (\alpha(Z_t,t) \big)_{t\in [0, \xi \wedge T]}$.
\section{Appendix }\label{A} The Proof of proposition \ref{prop1}.\\
\begin{proof}
	Equation \eqref{20} can be written as:
	\begin{equation*}
		-\dfrac{\partial F}{\partial t} + \sup_{(\rho, C, \beta) \in \mathbb{R}^k \times \mathbb{R}^{+} \times \mathbb{R}} \mathcal{W}\big (t, x, z; \rho, C, \beta \big) = 0,
	\end{equation*} 
	where
	\begin{equation*}
		\mathcal{W}\big (t, x, z; \rho, C, \beta \big) = \mathcal{D}^{\rho, C, \beta} F\big (t, x, z; \omega_{1}(t), \omega_{2}(t) \big) + V(C) + \mu (t) V \bigg (x + \dfrac{\beta}{a_t} \bigg).
	\end{equation*}
	Next,
	\begin{align*}
		& - F_{t}  + \mathcal{W}\big (t, x, z; \rho, C, \beta \big)\\  & = \big (\theta(t) + \mu(t) \big)F - F_{t} + F_{x} \bigg [x r_{t} + \rho^{T} \big (U z + u - r_{1} \mathds{1} \big) - C - \beta + R_t \bigg] \\
		& - F_{\omega_1} f_1 - F_{\omega_2}f_{2} + \dfrac{1}{2}F_{xx}\rho^{T} \big (\alpha \mathbb{C}_{B}\alpha^T + \Sigma \mathbb{C}_{W}\Sigma^T \big)\rho + F_{x\omega_1}\rho^T \big (\alpha \mathbb{C}_{B}\alpha^T + \Sigma \mathbb{C}_{W}\Sigma^T \big) \lambda_{1} \\
		& + F_{x \omega_2} \rho^T \big (\alpha \mathbb{C}_{B}\alpha^T + \Sigma \mathbb{C}_{W}\Sigma^T \big) \lambda_{2} + \dfrac{1}{2} F_{\omega_1 \omega_1}\lambda^{T}_{1} \big (\alpha \mathbb{C}_{B}\alpha^T + \Sigma \mathbb{C}_{W}\Sigma^T \big)\lambda_1  + F_{\omega_1 \omega_2}\lambda^{T}_{1} \big (\alpha \mathbb{C}_{B}\alpha^T \\
		& + \Sigma \mathbb{C}_{W}\Sigma^T \big) \lambda_{2}  + \dfrac{1}{2}F_{\omega_2 \omega_2}\lambda^{T}_{2}\big (\alpha \mathbb{C}_{B}\alpha^T + \Sigma \mathbb{C}_{W}\Sigma^T \big)\lambda_{2} + \dfrac{C^{\delta}}{\delta} + \mu(t) \dfrac{\big (x + \frac{\beta}{a(t)} \big)^{\delta} }{\delta} = 0.\\
	\end{align*}
	Then the supremum in \eqref{20} can be derived as follows:
				\begin{eqnarray*}
				&& \dfrac{\partial \mathcal{W}}{\partial \rho} \\
				&& = F_{x} \big (Uz + u - r_{t} \mathbf{1} \big) + \big (\alpha \mathbb{C}_{B}\alpha^T + \Sigma \mathbb{C}_{W}\Sigma^T \big) \rho F_{xx} + \big (\alpha \mathbb{C}_{B}\alpha^T + \Sigma \mathbb{C}_{W}\Sigma^T \big)\lambda_{1} F_{x \omega_1}\\
				&& + \big (\alpha \mathbb{C}_{B}\alpha^T + \Sigma \mathbb{C}_{W}\Sigma^T \big) \lambda_{2} F_{x \omega_2}\\
				&& \dfrac{\partial \mathcal{W}}{\partial \rho}  = 0 \Leftrightarrow \hat{\rho} = - \big (\alpha \mathbb{C}_{B}\alpha^T + \Sigma \mathbb{C}_{W}\Sigma^T \big)^{-1} \big (U z + u - r_{t} \mathbf{1} \big) \dfrac{F_x}{F_{xx}} - \lambda_{1} \dfrac{F_{x \omega_1}}{F_{xx}} - \lambda_{2} \dfrac{F_{x \omega_2}}{F_{xx}}\\
				&& \dfrac{\partial \mathcal{W}}{\partial C}  = F_{x} - C^{\delta - 1}\\
				&& \dfrac{\partial \mathcal{W}}{\partial C}  = 0 \Leftrightarrow C^{\delta - 1} = F_{x} \Leftrightarrow \hat{C} = (F_x)^{-\frac{1}{1 - \delta}}\\
				&& \dfrac{\partial \mathcal{W}}{\partial \beta}  =-  F_{x} + \dfrac{\mu(t)}{a(t)} \bigg (x + \dfrac{\beta}{a(t)} \bigg)^{\delta -1} \Leftrightarrow  \hat{\beta} = a(t) \bigg [\bigg (\dfrac{a(t)}{\mu(t)} F_{x} \bigg)^{- \frac{1}{1 - \delta}} - x\bigg].
			\end{eqnarray*}
	Hence equation \eqref{20} becomes:
				\begin{eqnarray*}
				&& \big (\theta(t) + \mu(t)\big)F- F_{t} + F_{x} \bigg \{x r_t + \bigg [ - \big (\alpha \mathbb{C}_{B}\alpha^T + \Sigma \mathbb{C}_{W}\Sigma^T \big)^{-1}\big (Uz + u - r_{t}\mathbf{1} \big)\dfrac{F_x}{F_{xx}} - \lambda_{1} \dfrac{F_{x \omega_1}}{F_{xx}}\\
				&& - \lambda_{2} \dfrac{F_{x  \omega_2}}{F_{xx}} \bigg ]^{T} \big ( Uz +  u - r_{t} \mathbf{1} \big )+ R(t) - (F_{x})^{-\frac{1}{1-\delta}} - a(t) \bigg [\bigg (\dfrac{a(t)}{\mu(t)}F_x \bigg)^{- \frac{1}{1 - \delta}} - x\bigg]\bigg \} - F_{\omega_{1}}f_{1} - F_{\omega_2} f_{2}\\
				&&  + \dfrac{1}{2} F_{xx} \bigg \{ - \big (\alpha \mathbb{C}_{B}\alpha^T + \Sigma \mathbb{C}_{W}\Sigma^T \big)^{-1} \big (Uz + u - r_{t}\mathbf{1} \big)\dfrac{F_x}{F_{xx}} - \lambda_{1} \dfrac{F_{x \omega_1}}{F_{xx}} -\lambda_{2} \dfrac{F_{x \omega_2}}{F_{xx}}\bigg \}^{T} \big (\alpha \mathbb{C}_{B}\alpha^T \\
				&& + \Sigma \mathbb{C}_{W}\Sigma^T \big)\bigg \{ - \big (\alpha \mathbb{C}_{B}\alpha^T  + \Sigma \mathbb{C}_{W}\Sigma^T \big)^{-1} \big (Uz + u - r_{t} \mathbf{1} \big) \dfrac{F_{x}}{F_{xx}}- \lambda_{1}\dfrac{F_{x \omega_1}}{F_{xx}} - \lambda_{2}\dfrac{F_{x \omega_2}}{F_{xx}} \bigg \} - \\
				&& \bigg \{ \big (\alpha \mathbb{C}_{B}\alpha^T + \Sigma \mathbb{C}_{W}\Sigma^T \big)^{-1}\big (Uz + u - r_{t} \mathbf{1} \big)\dfrac{F_{x}}{F_{xx}} +\lambda_{1}\dfrac{F_{x \omega_1}}{F_{xx}} + \lambda_{2} \dfrac{F_{x \omega_2}}{F_{xx}} \bigg \}^{T} \big (\alpha \mathbb{C}_{B}\alpha^T + \Sigma \mathbb{C}_{W}\Sigma^T \big)\lambda_{1}F_{x \omega_{1}}\\
				&& + \bigg \{ - \big (\alpha \mathbb{C}_{B}\alpha^T + \Sigma \mathbb{C}_{W}\Sigma^T \big)^{-1}\big (Uz + u - r_{t} \mathbf{1} \big)\dfrac{F_x}{F_{xx}} - \lambda_{1} \dfrac{F_{x \omega_1}}{F_{xx}}- \lambda_{2} \dfrac{ F_{x \omega_2}}{F_{xx}} \bigg \}^{T}\big (\alpha \mathbb{C}_{B}\alpha^T \\
				&& + \Sigma \mathbb{C}_{W}\Sigma^T \big) \lambda_{2} F_{x \omega_2}+ \dfrac{1}{2} \lambda^{T}_{1} \big (\alpha \mathbb{C}_{B}\alpha^T + \Sigma \mathbb{C}_{W}\Sigma^T \big)\lambda_{1} F_{\omega_{1}\omega_{1}} + \lambda^{T}_{1} \big (\alpha \mathbb{C}_{B}\alpha^T + \Sigma \mathbb{C}_{W}\Sigma^T \big)\lambda_{2}F_{\omega_1 \omega_2} \\
				&& + \dfrac{1}{2} \lambda^{T}_{2} \big (\alpha \mathbb{C}_{B}\alpha^T + \Sigma \mathbb{C}_{W}\Sigma^T \big) \lambda_2 F_{\omega_2 \omega_2}+ \dfrac{1}{\delta} (F_x)^{-\frac{\delta}{1-\delta}} + \dfrac{\mu(t)}{\delta} \bigg [x + \bigg (\frac{a(t)}{\mu(t)} F_x \bigg)^{-\frac{1}{1-\delta}} - x\bigg]^{\delta} = 0.\\
				&& \Rightarrow   \big (\theta(t) + \mu(t)\big)F - F_{t} + F_{x} \bigg \{ x r_{t} - \bigg [  \big (Uz + u - r_t \mathbf{1} \big)^{T}\big (\alpha \mathbb{C}_{B}\alpha^T + \Sigma \mathbb{C}_{W}\Sigma^T \big)^{-1} \dfrac{F_x}{F_{xx}}\\
				&& + \dfrac{F_{x \omega_1}}{F_{xx}}\lambda_{1}^{T}  +  \dfrac{F_{x \omega_2} }{F_{xx}}\lambda_{2}^{T} \bigg ] \big ( Uz + u - r_{t}\mathbf{1} \big )+ R(t) - (F_{x})^{- \frac{1}{1-\delta}} - a(t) \bigg [\bigg (\dfrac{a(t)}{\mu(t)} F_x \bigg)^{-\frac{1}{1-\delta}} - x\bigg] \bigg \} \\
				&& - F_{\omega_1}f_1 - F_{\omega_2}f_{2} + \dfrac{1}{2}F_{xx} \bigg \{  \dfrac{F_x}{F_{xx}} \big (Uz + u - r_{t} \mathbf{1} \big)^{T}\big (\alpha \mathbb{C}_{B}\alpha^T + \Sigma \mathbb{C}_{W}\Sigma^T \big)^{-1}\\
				&& + \dfrac{F_{x\omega_1}}{F_{xx}}\lambda_{1}^{T}  + \dfrac{F_{x\omega_2}}{F_{xx}}\lambda_{2}^{T} \bigg \}\big (\alpha \mathbb{C}_{B}\alpha^T + \Sigma \mathbb{C}_{W}\Sigma^T \big)\bigg \{ \big (\alpha \mathbb{C}_{B}\alpha^T + \Sigma \mathbb{C}_{W}\Sigma^T \big)^{-1}\big (Uz + u - r_{t} \mathbf{1} \big) \dfrac{F_x}{F_{xx}} \\
				&& + \lambda_{1} \dfrac{F_{x \omega_1} }{F_{xx}} + \lambda_{2} \dfrac{F_{x\omega_2}}{F_{xx}} \bigg \} - F_{x\omega_1}\bigg \{\dfrac{F_x}{F_{xx}} \big (Uz + u - r_{t} \mathbf{1} \big)^{T}\big (\alpha \mathbb{C}_{B}\alpha^T + \Sigma \mathbb{C}_{W}\Sigma^T \big)^{-1}  + \dfrac{F_{x\omega_1}}{F_{xx}}\lambda_{1}^{T} \\
				&& + \dfrac{F_{x\omega_2}}{F_{xx}}\lambda_{2}^{T} \bigg \}\big (\alpha \mathbb{C}_{B}\alpha^T+ \Sigma \mathbb{C}_{W}\Sigma^T \big)\lambda_{1}  - F_{x \omega_2}\bigg \{  \dfrac{F_x}{F_{xx}} \big (Uz +u  - r_{t} \mathbf{1}\big )^{T} \big ( \alpha \mathbb{C}_{B}\alpha^{T} + \Sigma \mathbb{C}_{W}\Sigma^T \big)^{-1}  \\
				&& +  \dfrac{F_{x \omega_1}}{F_{xx}}\lambda_{1}^{T} +  \dfrac{F_{x\omega_2}}{F_{xx}}\lambda_{2}^{T} \bigg \} \big (\alpha \mathbb{C}_{B}\alpha^T + \Sigma \mathbb{C}_{W}\Sigma^T \big)\lambda_{2} + \dfrac{1}{2} F_{\omega_1 \omega_1}\lambda^{T}_{1}\big (\alpha \mathbb{C}_{B}\alpha^T + \Sigma \mathbb{C}_{W}\Sigma^T \big)\lambda_{1}
			\end{eqnarray*}
	\begin{eqnarray*}
	&&  + F_{\omega_1 \omega_2} \lambda^{T}_{1} \big (\alpha \mathbb{C}_{B}\alpha^T + \Sigma \mathbb{C}_{W}\Sigma^T \big)\lambda_{2} + \dfrac{1}{2} F_{\omega_2 \omega_2} \lambda^{T}_{2} \big (\alpha \mathbb{C}_{B}\alpha^T + \Sigma \mathbb{C}_{W}\Sigma^T \big) \lambda_{2} + \dfrac{1}{\delta} \big (F_x \big)^{-\frac{\delta}{1-\delta}} \\
	&& + \dfrac{\mu(t)}{\delta} \bigg (\frac{a(t)}{\mu(t)}F_x \bigg)^{-\frac{\delta}{1-\delta}} = 0.\\
	&& \Rightarrow \big (\theta(t) + \mu(t)\big)F - F_{t} + F_{x} \bigg [ x \big ( r_{t} + a(t) \big ) + R(t) \bigg ] - F_{x} \bigg [\dfrac{F_x}{F_{xx}} \big ( Uz + u - r_{t}\mathbf{1}\big )^{T} \big (\alpha \mathbb{C}_{B}\alpha^T \\
	&& + \Sigma \mathbb{C}_{W}\Sigma^T \big)^{-1} + \dfrac{F_{x\omega_1}}{F_{xx}}\lambda_{1}^{T} +  \dfrac{F_{x\omega_2}}{F_{xx}}\lambda_{2}^{T} \bigg ]\big ( Uz + u  - r_{t}\mathbf{1} \big )- F_{\omega_1} f_{1} - F_{\omega_2}f_{2} \\
	&& + \dfrac{1}{2} F_{xx} \bigg \{  \dfrac{F_x}{F_{xx}} \big ( Uz + u - r_{t} \mathbf{1}\big )^{T}\big (\alpha \mathbb{C}_{B}\alpha^T + \Sigma \mathbb{C}_{W}\Sigma^T \big)^{-1} + \dfrac{F_{x\omega_1}}{F_{xx}} \lambda_{1}^{T} +  \dfrac{F_{x\omega_2}}{F_{xx}}\lambda_{2}^{T} \bigg \}\big (\alpha \mathbb{C}_{B}\alpha^T\\
	&& + \Sigma \mathbb{C}_{W}\Sigma^T \big)\bigg \{ \big (\alpha \mathbb{C}_{B}\alpha^T + \Sigma \mathbb{C}_{W}\Sigma^T \big)^{-1}\big (Uz + u - r_{t} \mathbf{1} \big ) \dfrac{F_x}{F_{xx}} + \lambda_{1} \dfrac{F_{x \omega_1}}{F_{xx}} + \lambda_{2} \dfrac{F_{x\omega_2}}{F_{xx}} \bigg \} \\
	&& -F_{x\omega_1} \bigg \{\big ( Uz + u - r_{t}\mathbf{1}\big )^{T} \big (\alpha \mathbb{C}_{B}\alpha^T + \Sigma \mathbb{C}_{W}\Sigma^T \big)^{-1} \dfrac{F_x}{F_{xx}} +  \dfrac{F_{x\omega_1} }{F_{xx}}\lambda_{1}^T +  \dfrac{F_{x \omega_2}}{F_{xx}} \lambda_{2}^T \bigg \}\big (\alpha \mathbb{C}_{B}\alpha^T \\
	&& + \Sigma \mathbb{C}_{W}\Sigma^T \big)\lambda_{1} - F_{x \omega_2}\bigg \{ \big (Uz + u - r_{t}\mathbf{1}\big )^{T}  \big (\alpha \mathbb{C}_{B}\alpha^T+ \Sigma \mathbb{C}_{W}\Sigma^T \big)^{-1} \dfrac{F_x}{F_{xx}} + \dfrac{F_{x\omega_1}}{F_{xx}}\lambda_{1}^{T} \\
	&& + \dfrac{F_{x \omega_2}}{F_{xx}}\lambda_{2}^T \bigg \}^{T}\big (\alpha \mathbb{C}_{B}\alpha^T + \Sigma \mathbb{C}_{W}\Sigma^T \big)\lambda_{2} + \dfrac{1}{2} F_{\omega_1 \omega_1} \lambda^{T}_{1} \big (\alpha \mathbb{C}_{B}\alpha^T + \Sigma \mathbb{C}_{W}\Sigma^T \big) \lambda_{1} +\\
	&& F_{\omega_1 \omega_2}\lambda^{T}_{1} \big (\alpha \mathbb{C}_{B}\alpha^T + \Sigma \mathbb{C}_{W}\Sigma^T \big)\lambda_{2} + \dfrac{1}{2}F_{\omega_2 \omega_2} \lambda^{T}_{2} \big (\alpha \mathbb{C}_{B}\alpha^T + \Sigma \mathbb{C}_{W}\Sigma^T \big) \lambda_{2}\\
	&& + \bigg (\dfrac{1-\delta}{\delta} \bigg) \bigg [1 + \dfrac{\big (a(t) \big)^{-\frac{\delta}{1-\delta}} }{\big (\mu(t) \big)^{-\frac{1}{1-\delta}} } \bigg]\big (F_x \big)^{-\frac{\delta}{1-\delta}} = 0.\\
	&&  \Rightarrow -F_{t} + \big (\theta(t) + \mu (t) \big ) F + F_{x} \big [\big (r_{t} + a(t) \big)x + R(t) \big ] - F_{\omega_{1}} f_{1} - F_{\omega_2} f_{2} + \bigg \{ \dfrac{F_{x}}{F_{xx}} \big (\alpha \mathbb{C}_{B} \alpha^{T} \\
	&& + \Sigma \mathbb{C}_{W}\Sigma^{T} \big)^{-1} \big ( Uz + u - r_{t} \mathbf{1}\big) + \dfrac{F_{x\omega_1}}{F_{xx}} \lambda_{1} + \dfrac{F_{x \omega_2}}{F_{xx}}\lambda_{2} \bigg \}^{T} \bigg [- F_{x} \big (Uz + u - r_{t}\mathbf{1} \big) + \\
	&&  \dfrac{1}{2} F_{xx} \big (\alpha \mathbb{C}_{B}\alpha^{T} + \Sigma \mathbb{C}_{W} \Sigma^{T} \big) \bigg \{\big (\alpha \mathbb{C}_{B}\alpha^{T} + \Sigma \mathbb{C}_{W} \Sigma^{T} \big)^{-1} \big (Uz + u - r_{t} \mathbf{1} \big) \dfrac{F_x}{F_{xx}} + \lambda_{1} \dfrac{F_{x\omega_1}}{F_{xx}} \\
	&& + \lambda_{2} \dfrac{F_{x \omega_2}}{F_{xx}}\bigg \}  - F_{x\omega_1}  \big (\alpha \mathbb{C}_{B} \alpha^{T} + \Sigma \mathbb{C}_{W}\Sigma^{T} \big)\lambda_1 - F_{x\omega_2}  \big (\alpha \mathbb{C}_{B} \alpha^{T} + \Sigma \mathbb{C}_{W}\Sigma^{T} \big)\lambda_2\bigg]\\
	&& + \dfrac{1}{2} F_{\omega_1 \omega_1} \lambda_{1}^{T}  \big (\alpha \mathbb{C}_{B} \alpha^{T} + \Sigma \mathbb{C}_{W}\Sigma^{T} \big) \lambda_{1} + F_{\omega_1 \omega_2} \lambda_{1}^{T}  \big (\alpha \mathbb{C}_{B}\alpha^{T} + \Sigma \mathbb{C}_{W} \Sigma^{T} \big) \lambda_{2} \\
	&& + \dfrac{1}{2} F_{\omega_2 \omega_2} \lambda_{2}^{T}  \big (\alpha \mathbb{C}_{B}\alpha^{T} + \Sigma \mathbb{C}_{W} \Sigma^{T} \big) \lambda_{2} + \bigg ( \dfrac{1 - \delta}{\delta} \bigg ) \bigg [1 + \dfrac{ \big (a(t) \big)^{-\frac{\delta}{1-\delta} } }{\big (\mu(t)\big)^{-\frac{1}{1-\delta}} } \bigg] \big (F_{x} \big)^{-\frac{\delta}{1-\delta}} = 0.\\
	&& \Rightarrow- F_{t} + \big (\theta (t) + \mu (t)\big ) F + F_{x} \big [\big (r_{t} + a_{t}\big) x + R(t) \big ] - F_{\omega_1}f_{1}  - F_{\omega_2} f_{2} \\
	&& - \dfrac{1}{2} F_{xx} \bigg \{ \dfrac{F_x}{F_{xx}}  \big (\alpha \mathbb{C}_{B}\alpha^{T} + \Sigma \mathbb{C}_{W} \Sigma^{T} \big)^{-1} \big (Uz + u -r_{t} \mathbf{1} \big) + \dfrac{F_{x \omega_1} }{F_{xx}} \lambda_{1} +  \dfrac{F_{x \omega_2} }{F_{xx}} \lambda_{2} \bigg \}^{T}   \big (\alpha \mathbb{C}_{B}\alpha^{T} \\
	&& + \Sigma \mathbb{C}_{W} \Sigma^{T} \big) \bigg \{ \dfrac{2 F_{x}}{F_{xx}} \big (\alpha \mathbb{C}_{B}\alpha^{T} + \Sigma \mathbb{C}_{W} \Sigma^{T} \big)^{-1} \big (Uz + u - r_{t} \mathbf{1} \big ) - \bigg \{ \big (\alpha \mathbb{C}_{B}\alpha^{T} + 
		 \Sigma \mathbb{C}_{W} \Sigma^{T} \big)^{-1} \big (Uz \\
		 && + u - r_{t} \mathbf{1}\big) \dfrac{F_x}{F_{xx}} + \lambda_{1} \dfrac{F_{x \omega_1}}{F_{xx}} + \lambda_{2} \dfrac{F_{x \omega_2}}{F_{xx}} \bigg \} + 2 \dfrac{F_{x\omega_1}}{F_{xx}} \lambda_{1} + 2 \dfrac{F_{x\omega_2}}{F_{xx}} \lambda_{2} \bigg \} + \dfrac{1}{2} F_{\omega_1 \omega_1} \lambda_{1}^{T}\big (\alpha \mathbb{C}_{B}\alpha^{T} \\
		 && + \Sigma \mathbb{C}_{W} \Sigma^{T} \big) \lambda_{1}  + F_{\omega_{1} \omega_{2}} \lambda_{1}^{T} \big (\alpha \mathbb{C}_{B}\alpha^{T} + \Sigma \mathbb{C}_{W} \Sigma^{T} \big)\lambda_{2} + \dfrac{1}{2} F_{\omega_2 \omega_2} \lambda_{2}^{T} \big (\alpha \mathbb{C}_{B}\alpha^{T} + \Sigma \mathbb{C}_{W} \Sigma^{T} \big)\lambda_{2} \\
		 && +  \bigg ( \dfrac{1 - \delta}{\delta} \bigg ) \bigg [1 + \dfrac{ \big (a(t) \big)^{-\frac{\delta}{1-\delta} } }{\big (\mu(t)\big)^{-\frac{1}{1-\delta}} } \bigg] \big (F_{x} \big)^{-\frac{\delta}{1-\delta}} = 0.
	\end{eqnarray*}
	\begin{eqnarray}\nonumber\label{22} 
&&  \Rightarrow \big (\theta(t) + \mu(t)\big)F - F_{t} + F_{x} \bigg [x \big (r_t + a(t) \big) + R(t)\bigg] - F_{\omega_1} f_1 - F_{\omega_2} f_2 -  \dfrac{1}{2} F_{xx} \bigg \{ \big (\alpha \mathbb{C}_{B}\alpha^T \\ \nonumber
&& + \Sigma \mathbb{C}_{W}\Sigma^T \big)^{-1}\big ( Uz + u - r_{t}\mathbf{1} \big ) \dfrac{F_x}{F_{xx}} +  \dfrac{F_{x \omega_1}}{F_{xx}}\lambda_{1} +  \dfrac{F_{x \omega_2}}{F_{xx}}\lambda_{2} \bigg \}^{T} \big (\alpha \mathbb{C}_{B}\alpha^{T} + \Sigma \mathbb{C}_{W} \Sigma^{T} \big) \bigg \{ \dfrac{F_x}{F_{xx}}\big (\alpha \mathbb{C}_{B}\alpha^T  \\\nonumber
&& + \Sigma \mathbb{C}_{W}\Sigma^T \big)^{-1} \big (Uz + u - r_{t} \mathbf{1} \big ) +  \dfrac{F_{x \omega_1}}{F_{xx}}\lambda_{1} +  \dfrac{F_{x \omega_2}}{F_{xx}}\lambda_{2} \bigg \}   +\dfrac{1}{2} F_{\omega_1 \omega_1} \lambda_{1}^{T}\big (\alpha \mathbb{C}_{B}\alpha^{T} + \Sigma \mathbb{C}_{W} \Sigma^{T} \big) \lambda_{1}  \\\nonumber
&&  +  F_{\omega_{1} \omega_{2}} \lambda_{1}^{T} \big (\alpha \mathbb{C}_{B}\alpha^{T} + \Sigma \mathbb{C}_{W} \Sigma^{T} \big)\lambda_{2} + \dfrac{1}{2} F_{\omega_2 \omega_2} \lambda_{2}^{T} \big (\alpha \mathbb{C}_{B}\alpha^{T} + \Sigma \mathbb{C}_{W} \Sigma^{T} \big)\lambda_{2}\\
&& + \bigg (\dfrac{1-\delta}{\delta} \bigg) \bigg [1 + \dfrac{\big (a(t) \big)^{-\frac{\delta}{1-\delta}} }{\big (\mu(t) \big)^{-\frac{1}{1-\delta}} } \bigg]\big (F_x \big)^{-\frac{\delta}{1-\delta}} = 0.
	\end{eqnarray}
	Let us assume that $F$ is of the form
	\begin{eqnarray}\label{23}
		F\big (t, x, \omega_{1}(t), \omega_{2}(t) \big) = \dfrac{1}{\delta} \big (x + \omega_{1}(t) \big)^{\delta} \big (\omega_{2}(t) \big)^{1-\delta}
	\end{eqnarray}
	Thus, we substitute equation \eqref{23} into equation \eqref{22} to have:
	\begin{eqnarray*}
F_{t} && = \dfrac{\partial F}{\partial t} = \big (x + \omega_{1}(t) \big)^{\delta - 1} \big (\omega_{2}(t) \big)^{1 - \delta}  \omega^{\prime}_{1}(t) + \dfrac{1-\delta}{\delta} \big (x + \omega_{1}(t) \big)^{\delta} \big (\omega_{2}(t) \big)^{-\delta}  \omega^{\prime}_{2}(t)\\
&& = - \bigg (\dfrac{\omega_{2}(t)}{x + \omega_{1}(t)} \bigg)^{1-\delta}f_{1} -  \bigg (\dfrac{1-\delta}{\delta} \bigg)\bigg (\dfrac{x + \omega_{1}(t)}{ \omega_{2}(t)} \bigg)^{\delta}f_{2} \\
F_{x} && = \dfrac{\partial F}{\partial x} = \big (x + \omega_{1}(t) \big)^{\delta - 1} \big (\omega_{2}(t) \big)^{1-\delta} = \bigg (\dfrac{\omega_{2}(t)}{x + \omega_{1}(t) } \bigg)^{1-\delta}\\
F_{\omega_1} && = \dfrac{\partial F}{\partial \omega_1} \Rightarrow \dfrac{\partial F}{\partial t}  = \dfrac{\partial F}{\partial \omega_{1}} \times \dfrac{\partial \omega_1}{\partial t} \\
&& = - \dfrac{1}{f_1} \bigg [- \bigg (\dfrac{\omega_{2}(t)}{x + \omega_{1}(t)} \bigg)^{1-\delta} f_{1} -  \bigg (\dfrac{1-\delta}{\delta} \bigg )\bigg (\dfrac{x + \omega_{1}(t) }{\omega_{2}(t)} \bigg)^{\delta} f_{2}  \bigg] \\
&& = \bigg (\dfrac{\omega_{2}(t) }{x + \omega_1(t)} \bigg)^{1-\delta} + \bigg (\dfrac{1-\delta}{\delta} \bigg) \bigg (\dfrac{x + \omega_{1}(t)}{\omega_{2}(t)} \bigg) \dfrac{f_2}{f_1}\\
F_{\omega_2} && = \dfrac{\partial F}{\partial \omega_2} \Rightarrow \dfrac{\partial F}{\partial t} \\
&& = \dfrac{\partial F}{\partial \omega_2} \times \dfrac{\partial \omega_2}{\partial t}\\
&& = - \dfrac{1}{f_2} \bigg [- \bigg (\dfrac{\omega_{2}(t)}{x + \omega_{1}(t)} \bigg)^{1-\delta} f_{1} - \bigg (\dfrac{1-\delta}{\delta} \bigg) \bigg (\dfrac{x + \omega_{1}(t)}{\omega_{2}(t)} \bigg)^{\delta} f_2\bigg]\\
&&  = \bigg (\dfrac{\omega_{2}(t)}{x + \omega_{1}(t)}\bigg)^{1-\delta} \dfrac{f_1}{f_2} + \bigg (\dfrac{1-\delta}{\delta} \bigg) \bigg (\dfrac{x + \omega_{1}(t)}{\omega_{2}(t) } \bigg)^{\delta}\\
F_{xx} && = \dfrac{\partial^2 F}{\partial x^2} \\
 && = - (1 - \delta)\big (x + \omega_{1}(t) \big)^{\delta -2} \big (\omega_{2}(t) \big)^{1-\delta}\\
 && = -\dfrac{ (1 - \delta) \big (\omega_{2}(t) \big)^{1-\delta}}{\big (x + \omega_{1}(t) \big)^{2-\delta}}\\
F_{x \omega_{1}} && = \dfrac{\partial^2 F}{\partial x \partial \omega_1}= \dfrac{\partial F_x / \partial t}{\partial \omega_1 / \partial t} \\ 
&& = - (1-\delta) \big (x + \omega_{1}(t) \big)^{\delta - 2}\big (\omega_{2}(t) \big)^{1-\delta} + \dfrac{ (1-\delta) \big (\omega_{2}(t) \big)^{-\delta} }{\big (x + \omega_{1}(t) \big)^{1-\delta} } \dfrac{f_2}{f_1}
	\end{eqnarray*}
	\begin{eqnarray*}
	F_{x \omega_{2}} && = \dfrac{\partial^2 F}{\partial x \partial \omega_2} = \dfrac{\partial F_x / \partial t}{\partial \omega_2 / \partial t} \\ 
	&& = - (1-\delta) \big (x + \omega_{1}(t) \big)^{\delta - 2}\big (\omega_{2}(t) \big)^{1-\delta} \dfrac{f_1}{f_2} + \dfrac{ (1-\delta) \big (\omega_{2}(t) \big)^{-\delta} }{\big (x + \omega_{1}(t) \big)^{1-\delta} }\\
	F_{\omega_1 \omega_1} && =  \dfrac{\partial^2 F}{\partial \omega^{2}_{1}} = \dfrac{\partial F_{\omega_1} / \partial t}{\partial \omega_1 / \partial t}\\
	\dfrac{\partial F_{\omega_1} }{\partial t} && = (1 - \delta) \big (x + \omega_{1}(t) \big)^{\delta - 2} \big (\omega_{2}(t) \big)^{1-\delta} f_1 - 2(1 - \delta) \big (x + \omega_{1}(t) \big)^{\delta - 1} \big (\omega_{2}(t) \big)^{-\delta} f_{2} \\
	&&  + (1 -\delta) \big (x + \omega_{1}(t) \big)^{\delta} \big (\omega_{2}(t) \big)^{-\delta - 1}\dfrac{f_{2}^2 }{f_1}\\
	F_{\omega_1 \omega_1} && = - (1 - \delta) \big (x + \omega_{1}(t) \big)^{\delta - 2} \big (\omega_{2}(t) \big)^{1-\delta}  + 2(1 - \delta) \big (x + \omega_{1}(t) \big)^{\delta - 1} \big (\omega_{2}(t) \big)^{-\delta} \dfrac{f_2}{f_1} \\
	&& - (1 - \delta)\big (x + \omega_{1}(t) \big)^{\delta } \big (\omega_{2}(t) \big)^{-\delta-1} \dfrac{f^2_2}{f^2_1}\\
	F_{\omega_2 \omega_2} && =  \dfrac{\partial^2 F}{\partial \omega^{2}_{2}} = \dfrac{\partial F_{\omega_2} / \partial t}{\partial \omega_2 / \partial t}\\ && = - (1 - \delta) \big (x + \omega_{1}(t) \big)^{\delta - 2} \big (\omega_{2}(t) \big)^{1-\delta} \dfrac{f^2_1}{f^2_2} + 2(1 - \delta) \big (x + \omega_{1}(t) \big)^{\delta - 1} \big (\omega_{2}(t) \big)^{-\delta} \dfrac{f_1}{f_2}\\
	&& - (1 - \delta)\big (x + \omega_{1}(t) \big)^{\delta } \big (\omega_{2}(t) \big)^{-\delta-1} \\
	F_{\omega_1 \omega_2} && = \dfrac{\partial^2 F}{\partial \omega_1 \partial \omega_2} = \dfrac{\partial F_{\omega_1} }{\omega_2}\\
	 && = \dfrac{\partial F_{\omega_1}/\partial t }{\partial \omega_2 / \partial t}\\
	 && = -(1-\delta) \big (x + \omega_{1}(t) \big)^{\delta - 2} \big (\omega_{2}(t) \big)^{1-\delta} \dfrac{f_1}{f_2} + 2 \dfrac{(1 - \delta)}{\big (x + \omega_{1}(t) \big)^{1- \delta} \big (\omega_{2}(t) \big)^{\delta}} \\
	&& - (1-\delta) \big (x + \omega_{1}(t) \big)^{\delta} \big (\omega_{2}(t) \big)^{-\delta - 1} \dfrac{f_2}{f_1}.
	\end{eqnarray*}
	It follows that 
	\begin{eqnarray*}
		\dfrac{F_{x}}{F_{xx}} && = - \dfrac{\big (x + \omega_{1}(t) \big)}{1-\delta}\\
		\dfrac{F_{x\omega_1}}{F_{xx}} && = 1 - \dfrac{\big (x + \omega_{1}(t) \big)}{\omega_{2}(t) } \dfrac{f_2}{f_1} \\
		\dfrac{F_{x \omega_2}}{F_{xx}} && = \dfrac{f_1}{f_2} - \dfrac{\big (x + \omega_{1}(t) \big)}{\omega_{2}(t)}.
	\end{eqnarray*}
	Hence
	\begin{eqnarray*}
	&& \eqref{22} \Rightarrow \bigg (\dfrac{\omega_{2}(t)}{x + \omega_{1}(t)} \bigg)^{1-\delta} f_{1} + \bigg (\dfrac{1 - \delta}{\delta}\bigg) \bigg ( \dfrac{x + \omega_{1}(t)}{\omega_{2}(t)} \bigg)^{\delta} f_{2}+ \dfrac{\big (\theta(t) + \mu(t) \big)}{\delta} \bigg (\dfrac{x + \omega_{1}(t)}{\omega_{2}(t)} \bigg)^{\delta} \omega_{2}(t) \\
	&& + \bigg (\dfrac{\omega_{2}(t)}{x + \omega_{1}(t)}\bigg)^{1-\delta} \bigg [ \big (r_t +a(t) \big ) x + R(t) \bigg ] - \bigg (\dfrac{\omega_{2}(t)}{x + \omega_{1}(t)} \bigg)^{1-\delta} f_{1} - \dfrac{\big (1-\delta \big)}{\delta} \bigg (\dfrac{x + \omega_{1}(t)}{\omega_{2}(t)} \bigg)^{\delta}f_{2}\\
	&& - \bigg (\dfrac{\omega_{2}(t)}{x + \omega_{1}(t)} \bigg)^{1-\delta}f_{1} - \dfrac{\big (1 - \delta \big)}{\delta} \bigg (\dfrac{x + \omega_{1}(t) }{\omega_{2}(t)} \bigg)^{\delta}f_{2} - \dfrac{1}{2} \bigg \{ \bigg (\dfrac{\omega_{2}(t)}{x + \omega_{1}(t)} \bigg)^{1-\delta} \big (Uz + u - r_{t} \mathbf{1}\big)^{T} \big (\alpha \mathbb{C}_{B} \alpha^{T}\\
	&& +  \Sigma \mathbb{C}_{W} \Sigma^{T} \big)^{-1} +  \big (1-\delta \big) \bigg (\dfrac{x + \omega_{1}(t)}{\omega_{2}} \bigg)^{\delta}  \bigg [ \dfrac{- \omega_{2}(t)}{\big (x + \omega_{1}(t) \big)^2} + \dfrac{1}{\big (x + \omega_{1}(t)\big)} \dfrac{f_2}{f_1} \bigg ] \lambda_{1}^{T} + 
	\end{eqnarray*}
	\begin{eqnarray*}
	&&  \big (1 - \delta \big ) \bigg (\dfrac{x + \omega_{1}(t) }{\omega_{2}(t)} \bigg)^{\delta} \bigg [\dfrac{- \omega_{2}(t) }{\big (x + \omega_{1}(t)  \big)^2 } \dfrac{f_1}{f_2} + \dfrac{1}{\big (x + \omega_{1}(t) \big)} \bigg] \lambda_{2}^{T} \bigg \} \bigg \{ - \dfrac{1}{\big (1 - \delta \big)}\big (x + \omega_{1}(t) \big)\big (Uz + u - r_{t} \mathbf{1} \big)\\
&&  \bigg (1 - \dfrac{\big (x + \omega_{1}(t) \big)}{\omega_{2}(t) } \dfrac{f_2}{f_1} \bigg) \big (\alpha \mathbb{C}_{B}\alpha^T + \Sigma \mathbb{C}_{W} \Sigma^T \big) \lambda_1 +  \bigg (\dfrac{f_1}{f_2} - \dfrac{\big (x + \omega_{1}(t) \big)}{\omega_{2}(t) }  \bigg) \big (\alpha \mathbb{C}_{B}\alpha^T + \Sigma \mathbb{C}_{W} \Sigma^T \big) \lambda_2 \bigg \} \\
&& - \dfrac{1}{2} \big (1 - \delta \big) \bigg (\dfrac{x + \omega_{1}(t) }{\omega_{2}(t) } \bigg)^{\delta}\bigg [ \dfrac{\omega_{2}(t)}{\big (x + \omega_{1}(t) \big)^2 } - 2 \dfrac{f_2}{f_1} \dfrac{1}{\big (x + \omega_{1}(t) \big)} + \dfrac{1}{\omega_{2}(t)} \dfrac{f^2_2}{f^2_1} \bigg ] \lambda^{T}_{1} \big (\alpha \mathbb{C}_{B}\alpha^T + \Sigma \mathbb{C}_{W}\Sigma^T \big)\lambda_{1}\\
&& - \big (1 - \delta \big) \bigg (\dfrac{x + \omega_{1}(t)}{\omega_{2}(t)} \bigg)^{\delta} \bigg [\dfrac{\omega_{2}(t)}{\big (x + \omega_{1}(t) \big)^{2} } \dfrac{f_1}{f_2}  - \dfrac{2}{\big (x + \omega_{1}(t)\big)} + \dfrac{1}{\omega_{2}(t)} \dfrac{f_2}{f_1}\bigg] \lambda^{T}_{1} \big (\alpha \mathbb{C}_{B} \alpha^{T} +  \Sigma \mathbb{C}_{W} \Sigma^{T} \big)\lambda_{2} \\
&& - \dfrac{1}{2} \big (1 - \delta \big) \bigg (\dfrac{x + \omega_{1}(t)}{\omega_{2}(t)} \bigg)^{\delta}\bigg [ \dfrac{\omega_{2}(t)}{\big (x + \omega_{1}(t)\big)^2 } \dfrac{f_{1}^{2}}{f_{2}^{2}} - \dfrac{2}{\big (x + \omega_{1}(t) \big) } \dfrac{f_1}{f_2} + \dfrac{1}{\omega_{2}(t)} \bigg ] \lambda^{T}_{2} \big (\alpha \mathbb{C}_{B}\alpha^T + \Sigma \mathbb{C}_{W}\Sigma^T \big)\lambda_{2} \\
&&  +  \big (\dfrac{1-\delta}{\delta} \big) \bigg [1 + \dfrac{ (a (t))^{- \frac{\delta}{1-\delta}} }{(\mu(t))^{- \frac{1}{1-\delta}} } \bigg] \bigg (\dfrac{\omega_{2}(t)}{x + \omega_{1} (t)} \bigg)^{-\delta} = 0.\\
&& \Rightarrow - \bigg (\dfrac{\omega_{2}(t) }{x + \omega_{1}(t)} \bigg)^{1-\delta} f_{1} - \dfrac{\big (1 - \delta \big)}{\delta} \bigg (\dfrac{x + \omega_{1}(t)}{\omega_{2}(t)} \bigg)^{\delta} f_{2} + \dfrac{\big (\theta(t) + \mu(t)\big)}{\delta} \bigg (\dfrac{x + \omega_{1}(t)}{\omega_{2}(t)} \bigg)^{\delta} \omega_{2}(t) +\\
&& \bigg (\dfrac{\omega_{2}(t) }{x + \omega_{1}(t)} \bigg)^{1-\delta} \bigg [\big (r_{t} + a(t) \big)x + R(t) \bigg] +  \dfrac{1}{2 \big (1 - \delta \big)} \dfrac{\big (\omega_{2}(t)\big)^{1-\delta} }{\big (x + \omega_{1}(t)\big)^{-\delta}} \big (Uz + u - r_{t} \mathbf{1} \big)^{T} \big (\alpha \mathbb{C}_{B} \alpha^{T} +  \\
&& \Sigma \mathbb{C}_{W} \Sigma^{T} \big)^{-1}  \big (Uz + u - r_{t} \mathbf{1} \big)  - \dfrac{1}{2} \bigg (\dfrac{\omega_{2}(t)}{x + \omega_{1}(t)}\bigg)^{1-\delta} \bigg (1 -\dfrac{\big (x + \omega_{1}(t) \big)}{\omega_{2}(t) } \dfrac{f_2}{f_1} \bigg)   \big (Uz + u - r_{t} \mathbf{1} \big)^{T}\lambda_{1} \\
&& - \dfrac{1}{2} \bigg (\dfrac{\omega_{2}(t)}{x + \omega_{1}(t)} \bigg)^{1-\delta} \bigg (\dfrac{f_1}{f_2} - \dfrac{\big (x + \omega_{1}(t) \big)}{\omega_{2}(t)} \bigg)\bigg (Uz + u - r_t \mathbf{1} \bigg)^{T}\lambda_{2} \\
&& + \dfrac{\big (x + \omega_{1}(t) \big)^{\delta + 1}  }{2 \big (\omega_{2}(t) \big)^{\delta}} \bigg [ - \dfrac{\omega_{2}(t)}{\big (x + \omega_{1}(t) \big)^2} + \dfrac{1}{\big (x + \omega_{1}(t) \big)} \dfrac{f_2}{f_1} \bigg ] \lambda^{T}_{1}\bigg (Uz + u - r_t \mathbf{1} \bigg) \\
&& - \dfrac{1}{2} \big (1 - \delta \big) \bigg (\dfrac{x + \omega_{1}(t)}{\omega_{2}(t)} \bigg)^{\delta} \bigg ( - \dfrac{\omega_{2}(t)}{\big (x + \omega_{1}(t) \big)^2 } + \dfrac{1}{\big (x + \omega_{1}(t) \big)} \dfrac{f_2}{f_1} \bigg ) \bigg (1 -\dfrac{\big (x + \omega_{1}(t)\big)}{\omega_{2}(t)} \dfrac{f_2}{f_1}\bigg )\lambda^{T}_{1}\big (\alpha \mathbb{C}_{B} \alpha^{T} \\
&& +  \Sigma \mathbb{C}_{W} \Sigma^{T} \big)\lambda_{1} - \dfrac{1}{2} \big (1 - \delta \big) \bigg (\dfrac{x + \omega_{1}(t)}{\omega_{2}(t)} \bigg)^{\delta} \bigg [- \dfrac{\omega_{2}(t)}{\big (x + \omega_{1}(t) \big)^2 } + \dfrac{1}{\big (x + \omega_{1}(t)\big)} \dfrac{f_2}{f_1} \bigg] \bigg (\dfrac{f_1}{f_2}\\
&&- \dfrac{\big (x + \omega_{1}(t) \big)}{\omega_{2}(t)} \bigg) \lambda^{T}_{1}\big (\alpha \mathbb{C}_{B} \alpha^{T} +  \Sigma \mathbb{C}_{W} \Sigma^{T} \big)\lambda_{2} + \dfrac{1}{2} \bigg (\dfrac{x + \omega_{1}(t)}{\omega_{2}(t)} \bigg)^{\delta} \bigg [- \dfrac{\omega_{2}(t)}{\big (x + \omega_{1}(t) \big)} \dfrac{f_1}{f_2} + 1 \bigg] \lambda^{T}_{2} \big (Uz + u - \\
&& r_{1} \mathbf{1} \big)- \dfrac{1}{2} \big (1 - \delta \big) \bigg (\dfrac{x + \omega_{1}(t)}{\omega_{2}(t)} \bigg)^{\delta} \bigg [- \dfrac{\omega_{2}(t) }{\big (x + \omega_{1}(t) \big)^2} \dfrac{f_1}{f_2} + \dfrac{1}{\big (x + \omega_{1}(t) \big)} \bigg] \bigg (1 - \dfrac{\big (x + \omega_{1}(t) \big )}{\omega_{2} (t)} \dfrac{f_2}{f_1} \bigg)\lambda^{T}_{2} \big (\alpha \mathbb{C}_{B} \alpha^{T} \\
&& +  \Sigma \mathbb{C}_{W} \Sigma^{T} \big)\lambda_{1} - \dfrac{1}{2} \big (1 - \delta \big) \bigg (\dfrac{x + \omega_{1}(t)}{\omega_{2}(t)} \bigg)^{\delta} \bigg [- \dfrac{\omega_{2}(t) }{\big (x + \omega_{1}(t) \big)^2} \dfrac{f_1}{f_2} + \dfrac{1}{\big (x + \omega_{1}(t) \big)} \bigg] \bigg (\dfrac{f_1}{f_2} - \\
&& \dfrac{\big (x + \omega_{1}(t) \big )}{\omega_{2} (t)} \bigg)\lambda^{T}_{2} \big (\alpha \mathbb{C}_{B} \alpha^{T}  +  \Sigma \mathbb{C}_{W} \Sigma^{T} \big)\lambda_{2}- \dfrac{1}{2} \big (1 - \delta \big) \bigg (\dfrac{x + \omega_{1}(t)}{\omega_{2}(t)} \bigg)^{\delta} \bigg [ \dfrac{\omega_{2}(t) }{\big (x + \omega_{1}(t) \big)^2} -  2 \dfrac{f_2}{f_1}\dfrac{1}{\big (x + \omega_{1}(t) \big)}  \\
&& + \dfrac{1}{\omega_{2}(t)} \dfrac{f^{2}_2}{f^{2}_1}\bigg]\lambda^{T}_{1} \big (\alpha \mathbb{C}_{B} \alpha^{T} +  \Sigma \mathbb{C}_{W} \Sigma^{T} \big)\lambda_{1} -  \big (1 - \delta \big) \bigg (\dfrac{x + \omega_{1}(t)}{\omega_{2}(t)} \bigg)^{\delta} \bigg [ \dfrac{\omega_{2}(t) }{\big (x + \omega_{1}(t) \big)^2} \dfrac{f_1}{f_2} -  2 \dfrac{1}{\big (x + \omega_{1}(t) \big)}\\
&& + \dfrac{1}{\omega_{2}(t)} \dfrac{f_2}{f_1}\bigg]\lambda^{T}_{1} \big (\alpha \mathbb{C}_{B} \alpha^{T} +  \Sigma \mathbb{C}_{W} \Sigma^{T} \big)\lambda_{2}  - \dfrac{1}{2} \big (1 - \delta \big) \bigg (\dfrac{x + \omega_{1}(t)}{\omega_{2}(t)} \bigg)^{\delta} \bigg [ \dfrac{\omega_{2}(t) }{\big (x + \omega_{1}(t) \big)^2} \dfrac{f^2_1}{f^2_2} -  2\dfrac{1}{\big (x + \omega_{1}(t) \big)} \dfrac{f_1}{f_2}  \\
&& + \dfrac{1}{\omega_{2}(t)} \bigg]\lambda^{T}_{2} \big (\alpha \mathbb{C}_{B} \alpha^{T} +  \Sigma \mathbb{C}_{W} \Sigma^{T} \big)\lambda_{2}+ \bigg (\dfrac{1-\delta}{\delta} \bigg) \bigg [ 1 + \dfrac{(a(t))^{-\frac{\delta}{1-\delta}} }{(\mu (t))^{-\frac{1}{1-\delta}} } \bigg] \bigg (\dfrac{x + \omega_{1}(t)}{\omega_{2}(t)} \bigg)^{\delta} = 0.
	\end{eqnarray*}
	\begin{eqnarray*}
	&& \Rightarrow - \bigg (\dfrac{\omega_{2}(t) }{x + \omega_{1}(t)} \bigg)^{1-\delta} f_{1} - \dfrac{\big (1 - \delta \big)}{\delta} \bigg (\dfrac{x + \omega_{1}(t)}{\omega_{2}(t)} \bigg)^{\delta} f_{2} + \dfrac{\big (\theta(t) + \mu(t)\big)}{\delta} \bigg (\dfrac{x + \omega_{1}(t)}{\omega_{2}(t)} \bigg)^{\delta} \omega_{2}(t) + \\
	&& \bigg (\dfrac{\omega_{2}(t) }{x + \omega_{1}(t)} \bigg)^{1-\delta} \bigg [\big (r_{t} + a(t) \big)x + R(t) \bigg] +  \dfrac{1}{2 \big (1 - \delta \big)} \bigg (\dfrac{x + \omega_{1}(t)}{\omega_{2}(t)} \bigg)^{\delta} \omega_{2}(t) \big (Uz + u - r_{t} \mathbf{1} \big)^{T} \big (\alpha \mathbb{C}_{B} \alpha^{T} \\
	&& +  \Sigma \mathbb{C}_{W} \Sigma^{T} \big)^{-1}  \big (Uz + u - r_{t} \mathbf{1} \big)- \dfrac{1}{2}\bigg (\dfrac{\omega_{2}(t)}{x + \omega_{1}(t)} \bigg)^{1-\delta} \big (Uz + u - r_{t}\mathbf{1} \big)^{T}\lambda_{1} + \dfrac{1}{2}\bigg (\dfrac{x + \omega_{1}(t)}{\omega_{2}(t)} \bigg)^{\delta} \dfrac{f_2}{f_1} \big (Uz \\
	&& + u - r_t \mathbf{1} \big)^{T}\lambda_{1} - \dfrac{1}{2}\bigg (\dfrac{\omega_{2}(t)}{x + \omega_{1}(t)} \bigg)^{1-\delta} \dfrac{f_1}{f_2} \big (Uz + u - r_t \mathbf{1} \big)^{T}\lambda_{2} +  \dfrac{1}{2}\bigg (\dfrac{x + \omega_{1}(t)}{\omega_{2}(t)} \bigg)^{\delta} \big (Uz + u - r_t \mathbf{1} \big)^{T}\lambda_{2} \\
	&& - \dfrac{1}{2}\bigg (\dfrac{\omega_{2}(t)}{x + \omega_{1}(t)} \bigg)^{1-\delta} \lambda^{T}_{1} \big (Uz + u - r_{t}\mathbf{1} \big) +  \dfrac{1}{2}\bigg (\dfrac{x + \omega_{1}(t)}{\omega_{2}(t)} \bigg)^{\delta} \dfrac{f_2}{f_1} \lambda^{T}_{1} \big (Uz + u- r_t \mathbf{1} \big)\\
	&&- \dfrac{1}{2} \big (1 - \delta \big) \bigg (\dfrac{x + \omega_{1}(t)}{\omega_{2}(t)} \bigg)^{\delta} \bigg [ - \dfrac{\omega_{2}(t) }{\big (x + \omega_{1}(t) \big)^2} + \dfrac{1}{\big (x + \omega_{1}(t) \big)} \dfrac{f_2}{f_1} + \dfrac{1}{\big (x + \omega_{1}(t) \big)} \dfrac{f_2}{f_1} - \\
	&& \dfrac{1}{\omega_{2}(t)} \dfrac{f^2_2}{f^2_1} \bigg ] \lambda^{T}_{1}  \big (\alpha \mathbb{C}_{B} \alpha^{T} +  \Sigma \mathbb{C}_{W} \Sigma^{T} \big) \lambda_{1} - \dfrac{1}{2} \big (1 - \delta \big) \bigg (\dfrac{x + \omega_{1}(t)}{\omega_{2}(t)} \bigg)^{\delta} \bigg [ - \dfrac{\omega_{2}(t) }{\big (x + \omega_{1}(t) \big)^2 } \dfrac{f_1}{f_2} + \dfrac{2}{\big (x + \omega_{1}(t) \big)} \\
	&& - \dfrac{1}{\omega_{2}(t)} \dfrac{f_2}{f_1} \bigg ] \lambda^{T}_{1}  \big (\alpha \mathbb{C}_{B} \alpha^{T} +  \Sigma \mathbb{C}_{W} \Sigma^{T} \big) \lambda_{2}- \dfrac{1}{2} \bigg (\dfrac{\omega_{2}(t) }{x + \omega_{1}(t) } \bigg)^{1-\delta} \dfrac{f_1}{f_2} \lambda^{T}_{2} \big (Uz + u - r_{t} \mathbf{1} \big) \\
	&& + \dfrac{1}{2} \bigg ( \dfrac{x + \omega_{1}(t) }{\omega_{2}(t)} \bigg)^{\delta} \lambda^{T}_{2} \big (Uz + u - r_1 \mathbf{1} \big) - \\
	&& \dfrac{1}{2}\big (1 -\delta \big) \bigg (\dfrac{x + \omega_{1}(t) }{\omega_{2}(t)} \bigg)^{\delta} \bigg [ -\dfrac{\omega_{2}(t) }{\big (x + \omega_{1}(t) \big)^2} \dfrac{f_1}{f_2} + \dfrac{2}{\big (x + \omega_{1}(t) \big)} - \dfrac{1}{\omega_{2}(t)} \dfrac{f_2}{f_1} \bigg ] \lambda^{T}_{2}  \big (\alpha \mathbb{C}_{B} \alpha^{T} +  \Sigma \mathbb{C}_{W} \Sigma^{T} \big) \lambda_{1}\\
	&& - \dfrac{1}{2}\big (1 -\delta \big) \bigg (\dfrac{x + \omega_{1}(t) }{\omega_{2}(t)} \bigg)^{\delta} \bigg [ -\dfrac{\omega_{2}(t) }{\big (x + \omega_{1}(t) \big)^2} \dfrac{f^2_1}{f^2_2} +\dfrac{2}{\big (x + \omega_{1}(t) \big)}\dfrac{f_1}{f_2} - \dfrac{1}{\omega_{2}(t)} \bigg ] \lambda^{T}_{2}  \big (\alpha \mathbb{C}_{B} \alpha^{T} +  \Sigma \mathbb{C}_{W} \Sigma^{T} \big) \lambda_{2} \\
	&& - \dfrac{1}{2}\big (1 -\delta \big) \bigg (\dfrac{x + \omega_{1}(t) }{\omega_{2}(t)} \bigg)^{\delta} \bigg [\dfrac{\omega_{2}(t) }{\big (x + \omega_{1}(t) \big)^2}  - \dfrac{2}{\big (x + \omega_{1}(t) \big)}\dfrac{f_2}{f_1} +\dfrac{1}{\omega_{2}(t)} \dfrac{f^2_2}{f^2_1} \bigg ] \lambda^{T}_{1}  \big (\alpha \mathbb{C}_{B} \alpha^{T} +  \Sigma \mathbb{C}_{W} \Sigma^{T} \big) \lambda_{1}\\
	&& - \big (1 -\delta \big) \bigg (\dfrac{x + \omega_{1}(t) }{\omega_{2}(t)} \bigg)^{\delta} \bigg [\dfrac{\omega_{2}(t) }{\big (x + \omega_{1}(t) \big)^2} \dfrac{f_1}{f_2}  - \dfrac{2}{\big (x + \omega_{1}(t) \big)} + \dfrac{1}{\omega_{2}(t)} \dfrac{f_2}{f_1} \bigg ] \lambda^{T}_{1}  \big (\alpha \mathbb{C}_{B} \alpha^{T} + \Sigma \mathbb{C}_{W} \Sigma^{T} \big) \lambda_{2} \\
	&&  - \dfrac{1}{2} \big (1 -\delta \big) \bigg (\dfrac{x + \omega_{1}(t) }{\omega_{2}(t)} \bigg)^{\delta} \bigg [\dfrac{\omega_{2}(t) }{\big (x + \omega_{1}(t) \big)^2} \dfrac{f^2_1}{f^2_2}  - \dfrac{2}{\big (x + \omega_{1}(t) \big)} \dfrac{f_1}{f_2} + \dfrac{1}{\omega_{2}(t)} \bigg ] \lambda^{T}_{2}  \big (\alpha \mathbb{C}_{B} \alpha^{T} +  \Sigma \mathbb{C}_{W} \Sigma^{T} \big) \lambda_{2}\\
	&& +  \bigg (\dfrac{1-\delta}{\delta} \bigg) \bigg [ 1 + \dfrac{(a(t))^{-\frac{\delta}{1-\delta}} }{(\mu (t))^{-\frac{1}{1-\delta}} } \bigg] \bigg (\dfrac{x + \omega_{1}(t)}{\omega_{2}(t)} \bigg)^{\delta} = 0.\\
	&& \Rightarrow - \bigg (\dfrac{\omega_{2}(t) }{x + \omega_{1}(t)} \bigg)^{1-\delta} f_{1} - \dfrac{\big (1 - \delta \big)}{\delta} \bigg (\dfrac{x + \omega_{1}(t)}{\omega_{2}(t)} \bigg)^{\delta} f_{2} + \dfrac{\big (\theta(t) + \mu(t)\big)}{\delta} \bigg (\dfrac{x + \omega_{1}(t)}{\omega_{2}(t)} \bigg)^{\delta} \omega_{2}(t) \\
	&& +  \bigg (\dfrac{\omega_{2}(t) }{x + \omega_{1}(t)} \bigg)^{1-\delta} \bigg [\big (r_{t} + a(t) \big)x+ R(t) \bigg] + \\
	&&  \dfrac{1}{2 \big (1 - \delta \big)} \bigg (\dfrac{x + \omega_{1}(t)}{\omega_{2}(t)} \bigg)^{\delta} \omega_{2}(t) \big (Uz + u - r_{t} \mathbf{1} \big)^{T} \big (\alpha \mathbb{C}_{B} \alpha^{T} +  \Sigma \mathbb{C}_{W} \Sigma^{T} \big)^{-1}  \big (Uz + u - r_{t} \mathbf{1} \big) \\
	&& - \bigg (\dfrac{\omega_{2}(t) }{x + \omega_{1}(t)}\bigg)^{1-\delta} \lambda^{T}_{1} \big (Uz + u - r_{t} \mathbf{1} \big) + \bigg (\dfrac{x + \omega_{1}(t)}{\omega_{2}(t)} \bigg)^{\delta}\lambda^{T}_{2} \big (Uz + u - r_t \mathbf{1} \big)\\
	&& +  \bigg (\dfrac{x + \omega_{1}(t)}{\omega_{2}(t)} \bigg)^{\delta}\dfrac{f_2}{f_1} \lambda^{T}_{1} \big (Uz + u - r_t \mathbf{1} \big)-   \bigg (\dfrac{\omega_{2}(t)}{x + \omega_{1}(t)} \bigg)^{1-\delta}\dfrac{f_1}{f_2} \lambda^{T}_{2} \big (Uz + u - r_t \mathbf{1} \big) \\
	&& +  \bigg (\dfrac{1-\delta}{\delta} \bigg) \bigg [ 1 + \dfrac{(a(t))^{-\frac{\delta}{1-\delta}} }{(\mu (t))^{-\frac{1}{1-\delta}} } \bigg] \bigg (\dfrac{x + \omega_{1}(t)}{\omega_{2}(t)} \bigg)^{\delta} = 0.
	\end{eqnarray*}
	\begin{eqnarray*}
	&& \Rightarrow - \bigg (\dfrac{\omega_{2}(t) }{x + \omega_{1}(t)} \bigg)^{1-\delta} f_{1} - \dfrac{\big (1 - \delta \big)}{\delta} \bigg (\dfrac{x + \omega_{1}(t)}{\omega_{2}(t)} \bigg)^{\delta} f_{2} + \dfrac{\big (\theta(t) + \mu(t)\big)}{\delta} \bigg (\dfrac{x + \omega_{1}(t)}{\omega_{2}(t)} \bigg)^{\delta} \omega_{2}(t) \\
	&& +  \bigg (\dfrac{\omega_{2}(t) }{x + \omega_{1}(t)} \bigg)^{1-\delta} \bigg [\big (r_{t} + a(t) \big)x+ \omega_{1}(t) \big (r_t + a(t) \big) - \omega_{1}(t) \big (r_t + a(t) \big) + R(t) \bigg] \\
	&& +  \dfrac{1}{2 \big (1 - \delta \big)} \bigg (\dfrac{x + \omega_{1}(t)}{\omega_{2}(t)} \bigg)^{\delta} \omega_{2}(t) \big (Uz + u - r_{t} \mathbf{1} \big)^{T} \big (\alpha \mathbb{C}_{B} \alpha^{T} +  \Sigma \mathbb{C}_{W} \Sigma^{T} \big)^{-1}  \big (Uz + u - r_{t} \mathbf{1} \big)\\
	&& - \bigg (\dfrac{\omega_{2}(t) }{x + \omega_{1}(t)}\bigg)^{1-\delta} \lambda^{T}_{1} \big ( Uz + u - r_{t} \mathbf{1} \big) + \bigg (\dfrac{x + \omega_{1}(t)}{\omega_{2}(t)} \bigg)^{\delta}\lambda^{T}_{2} \big (Uz + u - r_t \mathbf{1} \big)+  \bigg (\dfrac{x + \omega_{1}(t)}{\omega_{2}(t)} \bigg)^{\delta}\dfrac{f_2}{f_1} \lambda^{T}_{1} \big (Uz \\
	&& + u - r_t \mathbf{1} \big) -   \bigg (\dfrac{\omega_{2}(t)}{x + \omega_{1}(t)} \bigg)^{1-\delta}\dfrac{f_1}{f_2} \lambda^{T}_{2} \big (Uz + u - r_t \mathbf{1} \big) +  \bigg (\dfrac{1-\delta}{\delta} \bigg) \bigg [ 1 + \dfrac{(a(t))^{-\frac{\delta}{1-\delta}} }{(\mu (t))^{-\frac{1}{1-\delta}} } \bigg] \bigg (\dfrac{x + \omega_{1}(t)}{\omega_{2}(t)} \bigg)^{\delta} = 0.\\
	&& \Rightarrow - \bigg (\dfrac{\omega_{2}(t) }{x + \omega_{1}(t)} \bigg)^{1-\delta} f_{1} - \dfrac{\big (1 - \delta \big)}{\delta} \bigg (\dfrac{x + \omega_{1}(t)}{\omega_{2}(t)} \bigg)^{\delta} f_{2} + \dfrac{\big (\theta(t) + \mu(t)\big)}{\delta} \bigg (\dfrac{x + \omega_{1}(t)}{\omega_{2}(t)} \bigg)^{\delta} \omega_{2}(t) \\
	&& + \bigg (\dfrac{x +  \omega_{1}(t)}{\omega_{2}(t) } \bigg)^{\delta} \omega_{2}(t) \bigg [ r_{t} + a(t) \bigg] + \bigg (\dfrac{\omega_{2}(t)}{ x + \omega_{1}(t)} \bigg)^{1-\delta} \bigg [- \omega_{1}(t) \big (r_t + a(t) \big) + R(t) \bigg] \\
	&& +  \dfrac{1}{2 \big (1 - \delta \big)} \bigg (\dfrac{x + \omega_{1}(t)}{\omega_{2}(t)} \bigg)^{\delta} \omega_{2}(t) \big (Uz + u - r_{t} \mathbf{1} \big)^{T} \big (\alpha \mathbb{C}_{B} \alpha^{T} +  \Sigma \mathbb{C}_{W} \Sigma^{T} \big)^{-1}  \big (Uz + u - r_{t} \mathbf{1} \big)\\
	&& - \bigg (\dfrac{\omega_{2}(t) }{x + \omega_{1}(t)}\bigg)^{1-\delta} \lambda^{T}_{1} \big ( Uz + u - r_{t} \mathbf{1} \big) + \bigg (\dfrac{x + \omega_{1}(t)}{\omega_{2}(t)} \bigg)^{\delta}\lambda^{T}_{2} \big (Uz + u - r_t \mathbf{1} \big) \\
	&& +  \bigg (\dfrac{x + \omega_{1}(t)}{\omega_{2}(t)} \bigg)^{\delta}\dfrac{f_2}{f_1} \lambda^{T}_{1} \big (Uz + u - r_t \mathbf{1} \big) -   \bigg (\dfrac{\omega_{2}(t)}{x + \omega_{1}(t)} \bigg)^{1-\delta}\dfrac{f_1}{f_2} \lambda^{T}_{2} \big (Uz + u - r_t \mathbf{1} \big) \\
	&& +  \bigg (\dfrac{1-\delta}{\delta} \bigg) \bigg [ 1 + \dfrac{(a(t))^{-\frac{\delta}{1-\delta}} }{(\mu (t))^{-\frac{1}{1-\delta}} } \bigg] \bigg (\dfrac{x + \omega_{1}(t)}{\omega_{2}(t)} \bigg)^{\delta} = 0.\\
	&&  \Rightarrow \bigg (\dfrac{\omega_{2}(t)}{x + \omega_{1}(t)} \bigg)^{1-\delta}\bigg \{ - f_{1} - \big (r_{t} + a(t)\big) \omega_{1}(t) + R(t) - \lambda^{T}_{1} \big (Uz + u - r_t \mathbf{1} \big) - \dfrac{f_1}{f_2} \lambda^{T}_{2} \big (Uz + u - r_t \mathbf{1} \big) \bigg \} \\
	&& +  \bigg (\dfrac{1-\delta}{\delta} \bigg)\bigg (\dfrac{x + \omega_{1}(t) }{\omega_{2}(t)} \bigg)^{\delta} \bigg \{ - f_{2} + \bigg [1 + \dfrac{(a(t))^{-\frac{\delta}{1-\delta}} }{(\mu(t))^{- \frac{1}{1-\delta}} } \bigg] + \bigg (\dfrac{\delta}{1-\delta} \bigg) \lambda^{T}_{2} \big (Uz + u - r_t \mathbf{1}  \big) \\
	&& +  \bigg (\dfrac{\delta}{1-\delta} \bigg) \dfrac{f_2}{f_1} \lambda^{T}_{1} \big (Uz + u - r_t \mathbf{1}  \big) +  \bigg [ \dfrac{1}{1-\delta} \big (\theta(t) +  \mu(t)\big) + \bigg (\dfrac{\delta}{1-\delta} \bigg) \bigg (r_t  + a(t) \bigg) \\
	&& +   \dfrac{\delta}{2 \big (1 - \delta \big)^2}  \big (Uz + u - r_{t} \mathbf{1} \big)^{T} \big (\alpha \mathbb{C}_{B} \alpha^{T} +  \Sigma \mathbb{C}_{W} \Sigma^{T} \big)^{-1}  \big ( Uz +  u - r_{t} \mathbf{1} \big)\bigg ] \omega_{2}(t) \bigg \} = 0.
	\end{eqnarray*}
	It follows that
	\begin{equation}\label{39}
		f_{1} \bigg (1 + \dfrac{1}{f_2} \lambda^{T}_{2} \big (Uz + u - r_t \mathbf{1} \big) \bigg) = - \big (r_t + a(t) \big) \omega_{1}(t) + R(t) - \lambda^{T}_{1} \big (Uz + u - r_t \mathbf{1} \big)
	\end{equation}
	and 
	\begin{equation}\label{40}
		f_{2} \bigg (1 - \big (\dfrac{\delta}{1-\delta} \big) \dfrac{1}{f_1} \lambda^{T}_{1} \big (Uz + u - r_1 \mathbf{1} \big) \bigg) = H(t) + \bigg (\dfrac{\delta}{1-\delta} \bigg) \lambda^{T}_{2} \big (Uz + u - r_t \mathbf{1} \big) + K(t,z) \omega_{2}(t),
	\end{equation}
	where
	\begin{align*}
		H(t) & = 1 + \dfrac{(a(t))^{-\frac{\delta}{1-\delta}} }{(\mu (t))^{-\frac{1}{1-\delta}} }, \\
		K(t,z) & =  \dfrac{\delta}{2 \big (1 - \delta \big)^2}  \big (Uz + u - r_{t} \mathbf{1} \big)^{T} \big (\alpha \mathbb{C}_{B} \alpha^{T} +  \Sigma \mathbb{C}_{W} \Sigma^{T} \big)^{-1}  \big ( Uz +  u - r_{t} \mathbf{1} \big) +  \dfrac{\big (\theta(t) + \mu(t)\big)}{1-\delta} \\
		& + \bigg (\dfrac{\delta}{1-\delta} \bigg) \bigg (r_t + a(t) \bigg).
	\end{align*}
	\begin{equation}\label{41}
		\eqref{39} \Rightarrow f_{1} = \dfrac{ - \big (r_t + a(t) \big)\omega_{1}(t)  + R(t) - \lambda^{T}_{1}\big (Uz + u - r_t \mathbf{1} \big)  }{ 1 + \dfrac{1}{f_2}\lambda^T_2 \big (Uz + u - r_t \mathbf{1} \big) }.
	\end{equation}
	Then \eqref{41} in \eqref{40}
	\begin{eqnarray*}
	&& \Rightarrow f_{2} \bigg [1 - \dfrac{ \big (\frac{\delta}{1-\delta} \big)\lambda^T_1 \big (Uz + u - r_t \mathbf{1} \big)\big [1 + \frac{1}{f_2} \lambda^T_2 \big (Uz + u - r_{t} \mathbf{1} \big) \big]}{- \big (r_t + a(t) \big)\omega_{1}(t)  + R(t) - \lambda^T_1 \big (Uz + u - r_t \mathbf{1} \big)} \bigg] \\
	&& = H(t)  + \big (\dfrac{\delta}{1-\delta}\big) \lambda^{T}_2 \big (Uz + u - r_t \mathbf{1} \big) + K(t,z) \omega_{2}(t)\\
	&& \\
	&&  f_{2} \bigg [1 - \dfrac{\big (\frac{\delta}{1-\delta} \big) \lambda^T_1 \big (Uz + u - r_t \mathbf{1}\big)}{- \big (r_t + a(t) \big)\omega_{1}(t) + R(t) - \lambda^T_1 \big (Uz + u - r_t \mathbf{1} \big)} \bigg]  \\
	&& \\
	&& = \dfrac{\big (\frac{\delta}{1-\delta} \big) \lambda^T_1 \big (Uz + u - r_t \mathbf{1} \big)  \lambda^T_2 \big (Uz + u - r_t \mathbf{1} \big)}{- \big (r_t + a(t) \big)\omega_{1}(t) + R(t) - \lambda^T_1 \big (Uz + u - r_t \mathbf{1} \big)}	+ H(t) + \big (\dfrac{\delta}{1-\delta} \big) \lambda^T_2 \big (Uz + u - r_t \mathbf{1}\big) + K(t,z)\omega_{2}(t).
	\end{eqnarray*}
	Hence
	\begin{align}\nonumber\label{42}
		f_{2} = & \dfrac{\big (\frac{\delta}{1-\delta}\big) \lambda^T_2 \big (Uz + u - r_t \mathbf{1} \big)\big (- \big(r_t + a(t) \big)\omega_{1}(t) + R(t) \big)  }{ - \big (r_t + a(t) \big)\omega_{1}(t) + R(t) - \big (\frac{1}{1-\delta} \big)\lambda^T_1 \big (Uz + u - r_t \mathbf{1} \big) }  \\ \nonumber
		& \\ \nonumber
		& +  \dfrac{\big (H(t) + K(t,z) \omega_{2}(t)\big) \big (- \big (r_t + a(t) \big)\omega_{1}(t) + R(t) - \lambda^T_1 \big (Uz + u - r_t \mathbf{1} \big) \big)}{- \big (r_t + a(t) \big)\omega_{1}(t) + R(t) - \big (\frac{1}{1-\delta} \big)\lambda^T_1 \big (Uz + u - r_t \mathbf{1} \big) }\\
		&  \approx \bigg (\dfrac{\delta}{1-\delta} \bigg) \lambda^T_2 \big (Uz + u - r_t \mathbf{1} \big) + H(t) + K(t,z) \omega_{2}(t).
	\end{align}
	\eqref{42}  \text{in}  \eqref{41} allow us to get 
	\begin{align}\nonumber\label{43}
		f_{1} & = \dfrac{\big (\frac{\delta}{1-\delta} \big)\lambda^T_2 \big (Uz + u - r_t \mathbf{1} \big) + H(t) + K(t,z) \omega_{2}(t) }{\big (\frac{1}{1-\delta} \big)\lambda^T_2 \big (Uz + u - r_t \mathbf{1} \big) + H(t) + K(t,z) \omega_{2}(t) } \big (- \big (r_t + a(t) \big)\omega_{1}(t) + R(t) \big) \\
		& - \lambda^T_1 \big (Uz + u - r_t \mathbf{1} \big) \dfrac{H(t) + K(t,z) \omega_{2}(t) }{H(t) + K(t,z) \omega_{2}(t) + \big (\frac{1}{1-\delta} \big) \lambda^T_2 \big (Uz + u - r_t \mathbf{1} \big)}\\\nonumber
		&\approx - \big (r_t + a(t) \big) \omega_{1}(t) + R(t) - \lambda^{T}_{1} \big (Uz + u - r_t \mathbf{1} \big).
	\end{align}
	By assuming \textbf{H1)} and the fact that 
	\begin{equation}\label{24}
		f_{1} \big (t, z, \omega_{1}(t), \lambda_{1}(t) \big) = - \big (r_t + a(t) \big) \omega_{1}(t) + R(t) - \lambda^{T}_{1} \big (Uz + u - r_t \mathbf{1} \big)
	\end{equation}
	and 
	\begin{equation} \label{25}
		f_{2} \big (t, z, \omega_{2}(t), \lambda_{2}(t) \big) =  \bigg (\dfrac{\delta}{1-\delta} \bigg) \lambda^T_2 \big (Uz + u - r_t \mathbf{1} \big) + H(t) + K(t,z) \omega_{2}(t),
	\end{equation}
$\square$
\end{proof}
	\nocite{*}
	\bibliographystyle{apa}
	
\end{document}